\documentclass[11pt]{article}

\usepackage{amsmath,amssymb,amsthm} 
\usepackage[unicode,breaklinks=true,colorlinks=true]{hyperref}
\usepackage[capitalise]{cleveref}
\usepackage[dvipsnames]{xcolor}
\usepackage{mathrsfs}
\usepackage{verbatim}
\usepackage{mathtools}
\usepackage[normalem]{ulem}

\usepackage{tikz}
\usetikzlibrary{decorations.pathreplacing}

\usepackage{soul}
\usepackage{marginnote}


\usepackage[top=1in, bottom=1in, left=1in, right=1in, marginparwidth=1in, marginparsep=0.1in]{geometry}

\numberwithin{equation}{section}

\newtheorem{theorem}{Theorem}[section]
\newtheorem{proposition}[theorem]{Proposition}
\newtheorem{lemma}[theorem]{Lemma}
\newtheorem{definition}[theorem]{Definition} 
\newtheorem{corollary}[theorem]{Corollary}

\theoremstyle{remark}
\newtheorem{remark}[theorem]{Remark}

\definecolor{darkblue}{rgb}{0,0,0.7}
\definecolor{darkred}{rgb}{0.6,0,0}





\newcommand{\al}{\alpha}
\newcommand{\be}{\beta}

\newcommand{\e}{\varepsilon}
\newcommand{\ga}{{\gamma}}
\newcommand{\Ga}{{\Gamma}}
\newcommand{\la}{\lambda}
\newcommand{\La}{\Lambda}
\newcommand{\Om}{{\Omega}}

\newcommand{\si}{\sigma}
\newcommand{\td}{\tilde}

\newcommand{\De}{\Delta}

\newcommand{\Bp}{{\dot B_{p,\I}^{-1+\frac 3 p}}}

\newcommand{\R}{{\mathbb R }}

\newcommand{\N}{{\mathbb N}}
\newcommand{\Z}{{\mathbb Z}}

\newcommand{\cN}{{\mathcal N}}

\newcommand{\pd}{{\partial}}
\newcommand{\nb}{{\nabla}}

\newcommand{\I}{\infty}
 
\renewcommand{\div}{\mathop{\mathrm{div}}}

\newcommand{\supp}{\mathop{\mathrm{supp}}}

\newcommand{\donothing}[1]{{}}

\newcommand{\EQ}[1]{\begin{equation}\begin{split} #1 \end{split}\end{equation}}
\newcommand{\EQN}[1]{\begin{equation*}\begin{split} #1 \end{split}\end{equation*}}

\DeclareMathOperator*{\esssup}{ess\,sup}


\newcommand{\loc}{\mathrm{loc}}

\newcommand{\uloc}{\mathrm{uloc}}

\begin{document}
\title{Spatial decay of discretely self-similar solutions to the Navier-Stokes equations}
\author{Zachary Bradshaw \and Patrick Phelps} 
\date{\today}
\maketitle 
 
\begin{abstract}
Forward self-similar and discretely self-similar weak solutions of the Navier-Stokes equations  are known to exist globally in time for large self-similar and discretely self-similar  initial data and are known to be regular outside of a space-time paraboloid.   In this paper, we establish  spatial decay  rates for such  solutions which hold in the region of regularity provided the initial data has locally sub-critical regularity away from the origin. In particular, we (1) lower the H\"older regularity of the data required to obtain an optimal decay rate for the nonlinear part of the flow compared to the existing literature, (2) establish new decay rates without logarithmic corrections for some smooth data, (3) provide new decay rates for solutions with rough data, and, as an application of our decay rates, (4) provide new upper bounds on how rapidly potentially {non-unique}, scaling invariant local energy solutions can separate away from the origin.
\end{abstract}

\tableofcontents

\section{Introduction}
We investigate the spatial asymptotics of discretely self-similar solutions to the incompressible Navier-Stokes equations in $\R^4_+ = \R^3 \times (0,\I)$. We take viscosity to be unitary, assume forcing to be zero, and denote the
velocity by $u:\R^4_+\to \R^3$ and the pressure by $p:\R^4_+\to \R$. Then, $u$ and $p$ are required to satisfy
\EQ{\label{eq:NS}
\pd_t u -\De u +u\cdot \nb u + \nb p =0, \quad \div u =0,
}
with the initial condition
\EQ{
	u(\cdot, 0) = u_0, \quad \div u_0 = 0,
}
all understood in the sense of distributions.
The Navier-Stokes equations enjoy the following scaling property: If $u$ is a solution, then 
\EQ{u^\la(x,t):= \la u(\la x, \la^2 t), \,\, p^\la(x,t):= \la^2 p(\la x, \la^2 t),}
is also a solution to \eqref{eq:NS}.
A solution is called self-similar (SS) if $u^\la(x,t)=u(x,t)$ for all $\la>0$ and discretely self-similar (DSS) with factor $\la$ if this holds for a given $\la>1$.\footnote{This is a \textit{forward} notion of self-similarity. A \textit{backward} notion is also covered in the literature \cite{leray}.} The data is SS or DSS if the relevant identity holds with the time variable omitted. SS and DSS solutions are interesting as a source of non-uniqueness for \eqref{eq:NS} \cite{JS,JS2,GuillodSverak,ABC} as well as candidates for the failure of eventual regularity of Lemari\'e-Rieusset solutions with data in ultracritical classes {(refer to \cref{sec:mild.sol} for the definition of ultracritical)} \cite{BT1}. The most developed part of the theory of SS and DSS solutions concerns their existence: They are known to exist on $\R^4_+$ for SS or DSS data, respectively, in a variety of function spaces 
\cite{AB,Barraza,BT1,BT2,BT3,BT5,CP,Chae-Wolf,FDLR,GiMi,JS,Kato,KT-SSHS,LR2,Tsai-DSSI}. The most relevant existence results for our work concern large DSS data in $L^{3,\I}$ \cite{BT1}. Note that the preceding list of results excludes existence for DSS initial data in the Koch-Tataru space $\operatorname{BMO}^{-1}$ \cite{KT}---this is an interesting and apparently difficult open problem.

Given the robust existence theory, it is natural to investigate the asymptotic properties of these solutions at large length scales. This is thematically related to proposed work mentioned in the abstract of \cite{GuillodSverak} concerning asymptotic expansions as an important step toward establishing a computer assisted proof of non-uniqueness for the unforced Navier-Stokes equations in the Leray class. See also \cite{KoSv,JS3}.

A preliminary topic is far-field regularity which we presently review. In the self-similar case, Gruji\'c \cite{Grujic} proved that any forward self-similar suitable weak solution is smooth. Indeed, the singular set of a self-similar solution would necessarily be one dimensional in $\R^4_+$, which would violate \cite{CKN}. Gruji\'c's argument breaks down for DSS solutions because their singular sets might possess isolated singularities in space-time, which is not ruled out in \cite{CKN}.  Thus, DSS suitable weak solutions are not known to be smooth and, interestingly, are potential examples of solutions which \emph{do not} exhibit eventual regularity, in contrast to Leray-Hopy weak solutions. Nonetheless, there are examples where smoothness is known. In particular, Kang, Miura and Tsai \cite{KMT} establish smoothness when $u_0\in L^{3,\I}$ is $\la$-DSS and $\la$ is close to $1$. In another direction, Tsai and the first author prove smoothness for DSS local energy solutions evolving from small initial data in $L^2_\uloc$ (these terms are defined in Section 2). These two approaches are unified in \cite{KMT2} where Kang, Miura and Tsai extend their $\la$-close-to-$1$ argument to initial data in a space marginally smaller than $L^2_\uloc$. For $\la$ not close to $1$, the best result on regularity is that any DSS solution in the local energy class with data in $E^2$, which is the closure of the test functions in $L^2_\uloc$, is regular on a set of the form
\EQ{\label{set:FarFieldReg}
\{ (x,t)\in \R^3\times (0,\I) : |x|\geq R_0 \sqrt t  \},
}
where $R_0$ depends on the data. This implies that, for any $\e>0$, $u\in L^\I( \{(x,t) : |x|^2\geq R_0^2t ; t> \e\} )$. A precise statement is recalled below in Theorem \ref{theorem:regularity}. 
Throughout this paper, we use $R_0$ to refer to the region of regularity in the above sense. For globally smooth solutions, $R_0=0$.

We now survey known results on the algebraic decay of DSS solutions. Jia and \v Sver\' ak first constructed SS solutions for large data in \cite{JS}.  Following this, Tsai constructed  \textit{discretely} self-similar solutions for $\la$ close to one, and additionally established the following decay rates using ideas in \cite{JS} under the condition that $u_0\in C^\al_\loc (\R^3\setminus \{0\})$ and $u$ and $u_0$ are $\la$-DSS with scaling factor $\la$ close to $1$:
\EQ{\label{ineq:pointwisedecay1}
| u(x,t)|\lesssim \frac 1 {|x|+\sqrt t};\qquad | \td u(x,t)| \lesssim \frac {\sqrt t} {(|x|+\sqrt t)^2},
} 
where $\td u(x,t)=u(x,t) - e^{t\Delta }u_0(x)$.
Note that, thinking of \eqref{eq:NS} as a perturbation of the heat equation,  $\td u $ can be viewed as the nonlinear part of the evolution of $u$.
If, additionally, $u_0\in C^{1,\al}_\loc (\R^3\setminus \{0\})\cap DSS$ for $\la$ close to $1$, then Tsai showed 
\EQ{\label{ineq:pointwisedecay2}
| \td u (x,t)| \lesssim \frac { t} {(|x|+\sqrt t)^3}\log\bigg(2+\frac {|x|}{\sqrt t}\bigg).
}
This was then improved by Lai, Miao and Zheng to $u_0\in C^{0,1}_\loc(\R^3\setminus \{0\})\cap SS$ in \cite{LMZ} (the reader should refer to the statement of this in \cite[Theorem 1.1]{LMZ2}) and, in \cite{LMZ2}, it is additionally shown that the logarithm can be removed for $u_0\in  C^{1,1}_\loc(\R^3\setminus \{0\})\cap SS$. These results are exclusively for globally smooth solutions (i.e.~SS or DSS with $\la$ close to $1$) while, generally, weak solutions in the DSS class are only  known to be smooth in the region \eqref{set:FarFieldReg}. It is worth emphasizing that the estimates in the foundational literature \cite{JS,Tsai-DSSI} are sufficient for the applications in those papers and were not expected to be optimal---see the discussion in \cite{Tsai-DSSI}.

We mention an older paper of  Brandolese which pioneered this subject for {small}, smooth data \cite{Brandolese}. There,   an asymptotic formula is given for the time-independent profile of a self-similar solution in which the  dominant terms only involve the data. The remaining terms have faster decay, the 
worst of which is $\mathcal O(|x|^{-4})$. This implies spatial asymptotics for the self-similar solution.

Our goals for this paper are to generalize and improve the  decay rates \eqref{ineq:pointwisedecay1} and \eqref{ineq:pointwisedecay2}. In one direction, we establish pointwise bounds for solutions with large, rough initial data, i.e.~$u_0\in L^q_\loc(\R^3\setminus \{0\})$ where $3<q\leq \I$, in the DSS class  for any scaling factor $\la$.  
In another direction, we improve the H\"older regularity required to drop the logarithm in \eqref{ineq:pointwisedecay2} compared to \cite{LMZ2}. We also establish a finer bound on $\td u$ for H\"older regular data compared to \eqref{ineq:pointwisedecay2}.
To summarize these different cases, we will show for $3<q\leq \I$, $0<\al<1$ and $|x|\geq R_0\sqrt t$ that, for DSS data and DSS local energy solutions,
\[|\td u(x,t)|\lesssim
\setlength{\arraycolsep}{0pt}
  \renewcommand{\arraystretch}{1.2}
  \left\{\begin{array}{l @{\quad} l r ll}
 \frac{\sqrt t}{\sqrt t^{\frac{6}{q}} (|x|+\sqrt{t})^{2-\frac{6}{q}}}& u_0\in L^{q}_{\loc}(\R^3\setminus \{0\})  & \quad \text{(identical to \cite{LMZ} when $q=\I$)}  
\\
 \frac { \sqrt t^{1+\al}} {(|x|+\sqrt t)^{2+\al}} 
&u_0 \in C^\al_{\loc}(\R^3\setminus \{0\})  & \text{(new  scale of bounds)}
\\
  \frac { t} {(|x|+\sqrt t)^3}\log\big(2+\frac {|x|}{\sqrt t}\big)& u_0\in C^{1}_{\loc}(\R^3\setminus \{0\})  &   \quad \text{(identical to \cite{LMZ,LMZ2})}
\\
 \frac { t} {(|x|+\sqrt t)^3}& u_0\in C^{1,\al}_{\loc}(\R^3\setminus \{0\})  & \quad \text{(improves  \cite{Tsai-DSSI} and \cite{LMZ2})}
 \end{array}.\right .
\]
We emphasize that, in contrast to the previous works on this subject, all of our results apply to solutions which are only assumed to be regular on \eqref{set:FarFieldReg} and hold for all $\la>1$.  Furthermore, for the cases where we recover the bounds of \cite{LMZ} and \cite{LMZ2}, we do so for a larger class of solutions using simpler methods and without analyzing the Leray equations, which would not work generally for \textit{discretely} self similar solutions. The more complicated method is justified in \cite{LMZ,LMZ2} as they are additionally analyzing the \textit{fractional} Navier-Stokes equations.  Additionally, we explore improvements to these estimates when $e^{t\Delta}u_0$ is replaced by higher Picard iterates in the difference $\td u$.

\bigskip 
We now elaborate on our main results, beginning with our generalization of the decay rates in \cite{JS,Tsai-DSSI} to rough data.

\begin{theorem}[Algebraic decay for rough data]  \label{theorem:Lpdecay1} Let $q\in (3,\I]$ and ${u_0\in L^q_\loc (\R^3\setminus \{0\})}$ be divergence free and DSS. Assume $u$ is a DSS local energy solution with initial data $u_0$.
 It follows that:
\begin{enumerate}
\item For any    $l\in \N_0$ and $|x|\geq R_0\sqrt t$,
\EQ{\label{ineq:pointwiseBound.q}|\nb^l u(x,t)|\lesssim_{u_0,q,\la} \frac{1}{\sqrt{t}^{|l|+\frac{3}{q}} \left( |x|+\sqrt{t}\right) ^{1-\frac{3}{q}}}.} 
\item For the same selection of $x$ and $t$, the difference $\td u := u-e^{t\De}u_0$ satisfies,  
\EQ{\label{ineq:improved.decay.for.difference}
|\td u(x,t)| \lesssim_{u_0,q,\la} \frac{1}{\sqrt t^{\frac{6}{q}-1} (|x|+\sqrt{t})^{2-\frac{6}{q}}}.} 
\end{enumerate}
\end{theorem}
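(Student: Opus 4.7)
My approach is to reduce all estimates to a bounded reference region via the DSS scaling and then use the smoothness of $u$ there. Iterating the scaling law gives $\nb^l u(x,t)=\la^{k(1+l)}\nb^l u(\la^k x,\la^{2k}t)$ for every $k\in\Z$. Given $(x,t)$ with $|x|\geq R_0\sqrt t$, I choose $k$ so that $\la^{2k}t\in[1,\la^2]$; then $(x',t'):=(\la^k x,\la^{2k}t)$ lies in $\cR:=\{|x'|\geq R_0,\ t'\in[1,\la^2]\}$, with $|x'|\sim|x|/\sqrt t$. Since $\cR$ is contained in the smooth region of \cref{theorem:regularity}, a short computation reduces \eqref{ineq:pointwiseBound.q} to the scale-invariant reference estimate
\EQN{
|\nb^l u(x',t')|\lec (1+|x'|)^{-1+\frac 3 q},\qquad (x',t')\in\cR,
}
for every $l\in\N_0$, and reduces \eqref{ineq:improved.decay.for.difference} to the corresponding bound $|\td u(x',t')|\lec (1+|x'|)^{-2+\frac 6 q}$ on $\cR$ (the derivative count $l$ is absorbed on unscaling by the factor $\la^{k(1+l)}\sim \sqrt t^{-(1+l)}$).

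For part~(1) I would write $u=e^{t'\De}u_0+\td u$. For the heat piece, decompose $u_0$ along dyadic annuli $A_j=\{\la^{j-1}\leq|y|<\la^j\}$; the DSS relation $u_0(y)=\la u_0(\la y)$ forces $\|u_0\|_{L^q(A_j)}$ to be an explicit geometric sequence in $j$, and pointwise Gaussian estimates on $e^{t'\De}(u_0\mathbf{1}_{A_j})$, summed over $j$, give the desired decay in $|x'|$. For $\td u$, an initial uniform bound $|u|\lec 1$ on $\cR$ (available from the smoothness of $u$ on that bounded set), combined with the Duhamel identity $\td u(t')=-\int_0^{t'}e^{(t'-s)\De}\mathbb P\nb\cdot(u\otimes u)\,ds$, Oseen kernel decay $|\nb K(z,\tau)|\lec(|z|+\sqrt\tau)^{-4}$, and one or two iterations to upgrade the spatial decay to that of the heat piece, suffices. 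Parabolic interior regularity on fixed-size balls within $\cR$ then controls all derivatives by the same rate, and unscaling yields \eqref{ineq:pointwiseBound.q}.

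Part~(2) is obtained by plugging the bound of part~(1) into Duhamel: on the smooth region one has $|u(y,s)|^2\lec\sqrt s^{-6/q}(|y|+\sqrt s)^{-2+6/q}$, and splitting the $y$-integration into near-field $\{|y-x'|\leq |x'|/2\}$ and far-field, and the $s$-integration into $[0,t'/2]$ and $[t'/2,t']$, direct Oseen estimates yield the improved decay $(1+|x'|)^{-2+6/q}$. The main obstacle in both parts is the nonlinear contribution from inside the singular paraboloid $\{|y|<R_0\sqrt s\}$, where pointwise control of $u$ is unavailable; I would handle this using the local energy estimates $u\in L^\I_t L^2_\uloc$ and $\nb u\in L^2_t L^2_\uloc$ to bound $u\otimes u$ in $L^1_t L^1_\uloc$, exploiting that the reference point $x'$ is well-separated from that region (distance bounded below by a multiple of $R_0$) so that Oseen kernel decay absorbs the contribution, and using DSS iteration to convert the sums over annuli accumulating at the origin into absolutely convergent series.
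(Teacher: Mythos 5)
Your Part~2 and your treatment of the heat piece (dyadic annuli plus Gaussian bounds, which is the paper's Lemma~\ref{lemma:heat.equation.pointwise.decay}) match the paper, and your handling of the contribution from inside the paraboloid via $L^\infty_t L^2_\uloc$ is exactly what the paper does. The gap is in your Part~1 argument for $\td u$.

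You propose to start the Duhamel bootstrap from ``an initial uniform bound $|u|\lec 1$ on $\cR$.'' Unscaling that bound by discrete self-similarity gives only $|u(y,s)|\lec s^{-1/2}$ for $|y|\geq R_0\sqrt s$, with \emph{no} spatial decay. Feeding this into $\td u=-B(u,u)$ produces, for the region above the paraboloid,
\EQN{
\int_0^{t}\int_{\R^3}\frac{1}{(|x-y|+\sqrt{t-s})^{4}}\,\frac{1}{s}\,dy\,ds
\;\sim\;\int_0^{t}\frac{1}{\sqrt{t-s}}\,\frac{ds}{s},
}
which diverges at $s=0$. Any algebraic spatial decay $(1+|y|/\sqrt s)^{-\e}$ with $\e>0$ would rescue the integral (via Lemma~\ref{lemma.intbound1}), but producing that initial spatial decay for $u$ itself is precisely the content of Part~1, so the iteration cannot get started; for large data there is no smallness to close a fixed-point argument either. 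This is why the paper does \emph{not} obtain the decay of $u$ through Duhamel at all. Instead it first uses the Jia--\v{S}ver\'ak local smoothing theorem (Theorem~\ref{theorem:JSlocalsmoothing}) to show $u\in L^\I_t L^q(A_0^*)$ for short times, proves a matching bound $p\in L^\I_t L^{q/2}(A_0^*)$ by decomposing the pressure, and then observes that by DSS scaling the quantity $r^{-2}\int_{Q_r}(|u|^3+|p|^{3/2})$ on unit cylinders centered at $x_0$ scales like $|x_0|^{3(3/q-1)}$, hence is small for $|x_0|$ large; Lin's $\e$-regularity criterion (Lemma~\ref{lemma:ereg}) then yields $|\nb^l u|\lesssim |x_0|^{3/q-1}$ directly, with the derivative gain for free. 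Note in particular that the pressure estimate, which your proposal omits entirely, is an essential ingredient of this route. Only after the pointwise decay of $u$ is in hand does the Duhamel formula enter, in Part~2, exactly as you describe.
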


We give some context for the range of exponents $q\in (3,\I]$. In \cite{BT1}, it is shown that $u_0\in L^{3,\I}\cap DSS$ if and only if $u_0\in L^3_\loc(\R^3\setminus \{0\})$. Therefore, the range of exponents achieved in this theorem give decay estimates in  a scale of spaces which approach, but do not reach, the initial data space $L^{3,\I}$ on one end. On the other end, when $q=\I$, this space is close to, but  weaker than, the  H\"older-type spaces considered in  \cite{JS,Tsai-DSSI}. Thus, our estimates fill in the gap between the solutions originally constructed in \cite{JS,Tsai-DSSI} for smooth data and those constructed in \cite{BT1} for rough data. The endpoint case $L^{3,\I}$ from \cite{BT1} \textit{should} be excluded because, as shown in an example in \cite{BT1}, there is no algebraic decay rate for $e^{t\Delta}u_0$ when $u_0\in L^{3,\I}\cap DSS$.

It is reasonable to expect these results are optimal as is explained via a concrete example in Section \ref{sec:optimailty}.

Let $P_0 = P_0 (u_0) = e^{t\Delta}u_0$ and define the $k$-th Picard iterate to be $P_k = P_0 - B(P_{k-1},P_{k-1})$, where 
\[
B(f,g)= \int_0^t e^{(t-s)\Delta}\mathbb P\nb \cdot (f\otimes g)\,ds,
\] and $\mathbb P$ is the Leray projection operator. Note that $ e^{(t-s)\Delta}\mathbb P$ is called the Oseen tensor.

Our next result builds on \cref{theorem:Lpdecay1} by extending the results to the difference of $u$ and higher Picard iterates. \cref{theorem:pwdecay} states that the asymptotics of $u$ are more precisely matched by higher Picard iterates than by $P_0$, but only to a limited extent. Since Picard iterates are unique even for ultracritical data, this leads to an upper bound on how fast two non-unique, DSS local energy solutions can separate locally away from the origin. 

\begin{theorem}[Improved decay using Picard iterates] \label{theorem:pwdecay}
Let $q\in (3,\I]$ and ${u_0\in L^q_\loc (\R^3\setminus \{0\})}$ be divergence free and DSS. Assume $u$ is a DSS local energy solution with initial data $u_0$. 
Define for $k\in \N_0$,
\[
a_k = (k+2)\bigg(1-\frac{3}{q}\bigg) = a_{k-1}+1-\frac 3 q;   \quad k_q=\left\lceil \frac {4q} {q-3}-2  \right\rceil.
\]
   The following hold:
\begin{enumerate}
\item 
We have  for $|x|\geq R_0\sqrt t$ and $k< k_q$ that 
	\EQ{|u-P_k|(x,t) \lesssim_{k,\la,R_0,u_0}\frac{\sqrt{t}^{a_k}}{\sqrt{t}(|x|+\sqrt{t})^{a_k}}.}
\item We have for $|x|\geq R_0\sqrt t$ and $k\geq k_q$ that 
\EQ{\label{ineq:optimalPkdecay}| u-P_{k}|(x,t) \lesssim_{k,\la,R_0,u_0} \frac{\sqrt{t}^3}{(|x|+\sqrt{t})^4}.}
\item   Assume $v$  is another DSS local energy solution with data $u_0$.  Then for  $|x|\geq R_0\sqrt t$, 
\EQ{\label{lim:seperation.scale}
 | u-v |(x,t) \lesssim_{{q},\la,R_0,u_0} \frac{\sqrt{t}^3}{(|x|+\sqrt{t})^4}.  
}

\end{enumerate}
\end{theorem}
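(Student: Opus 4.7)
The strategy for Parts (1) and (2) is an induction on $k$ whose engine is the algebraic identity
\[
u - P_k \;=\; -B(u, u) + B(P_{k-1}, P_{k-1}) \;=\; -B(u - P_{k-1},\, u) \;-\; B(P_{k-1},\, u - P_{k-1}),
\]
coming from $u = P_0 - B(u,u)$ and the definition of $P_k$. The base case $k=0$ of Part (1) is precisely estimate \eqref{ineq:improved.decay.for.difference} of \cref{theorem:Lpdecay1}. The inductive step combines the hypothesis on $|u - P_{k-1}|$ with pointwise bounds on $|u|$ (from \cref{theorem:Lpdecay1}(1)) and on $|P_{k-1}|$. The bound $|P_{k-1}(x,t)| \lesssim \sqrt{t}^{\,-3/q}(|x|+\sqrt{t})^{-(1-3/q)}$ is established by a companion induction on $k$, using that the bilinear correction $B(P_{k-1},P_{k-1})$ is at worst of the same order as the base decay of $P_0 = e^{t\Delta}u_0$.

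The technical engine is a pointwise bilinear estimate based on the standard bound $|K(z,\tau)| \lesssim (|z|+\sqrt{\tau})^{-4}$ for the kernel $K$ of $e^{\tau\Delta}\mathbb{P}\nabla$. Plugging in the inductive decay, the target becomes
\[
|B(u - P_{k-1}, u)(x,t)| + |B(P_{k-1}, u - P_{k-1})(x,t)| \;\lesssim\; \frac{\sqrt{t}^{\,a_k}}{\sqrt{t}\,(|x|+\sqrt{t})^{a_k}}.
\]
I would handle the space-time integral by the standard decomposition: split $y$ according to whether $|y| \leq |x|/2$, $|y-x| \leq |x|/2$, or the intermediate regime, and split $s$ at $t/2$; inside each region the dominant factor between $|y|$ and $\sqrt{s}$ (respectively $|x-y|$ and $\sqrt{t-s}$) dictates the estimate. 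The improvement $a_k - a_{k-1} = 1 - 3/q$ per iteration comes from the extra far-field decay of the $|u|$ or $|P_{k-1}|$ factor over the base rate. Care is needed for the sub-region $|y| \leq R_0\sqrt{s}$, where the inductive algebraic decay is unavailable; there one bounds $|u|$ and $|P_{k-1}|$ by $L^2_\uloc$-type energy norms and uses $|x-y| \gtrsim |x|$ so that the kernel alone supplies the necessary far-field decay.

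Part (2) corresponds to saturation: once $a_k \geq 4$, the integrand in the bilinear form decays faster than the kernel $(|x-y|+\sqrt{t-s})^{-4}$, the kernel becomes the bottleneck, and the output bound freezes at $\sqrt{t}^3/(|x|+\sqrt{t})^4$ rather than improving to $\sqrt{t}^{a_k}/[\sqrt{t}\,(|x|+\sqrt{t})^{a_k}]$. The threshold $k_q = \lceil 4q/(q-3) - 2 \rceil$ is precisely the smallest integer $k$ for which $a_k = (k+2)(1-3/q) \geq 4$. Part (3) then follows immediately: since the Picard iterates $P_k$ are deterministic functionals of $u_0$ alone, applying Part (2) to both $u$ and $v$ with $k = k_q$ and invoking the triangle inequality gives
\[
|u - v|(x,t) \;\leq\; |u - P_{k_q}|(x,t) + |v - P_{k_q}|(x,t) \;\lesssim\; \frac{\sqrt{t}^3}{(|x|+\sqrt{t})^4}.
\]

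The main obstacle I anticipate is the iterative bilinear estimate in Part (1): tracking the precise gain of $1 - 3/q$ per step, and verifying it on each subregion of the space-time integral (including the non-trivial book-keeping on $\{|y|\le R_0\sqrt s\}$ where the algebraic decay is unavailable), requires careful case analysis. Establishing the companion pointwise bound on $P_k$ in tandem with the main induction is essentially self-reinforcing but adds another layer of book-keeping that must remain consistent at every stage.
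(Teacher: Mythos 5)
Your proposal is correct and follows essentially the same route as the paper: induction on $k$ via the bilinear decomposition of $u-P_{k+1}$, the companion pointwise bound on $P_k$ (the paper's \cref{lemma:Picard.decay}), a near-field/far-field split with $L^2_\uloc$ energy bounds plus $|x-y|\gtrsim|x|$ inside the paraboloid and the convolution estimate of \cref{lemma.intbound1} outside, saturation at quartic decay from the Oseen kernel, and the triangle inequality for Part (3). The only cosmetic difference is that you re-derive the far-field convolution bound by hand where the paper cites \cref{lemma.intbound1}.
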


To explain the idea behind this theorem, note that, since local energy solutions are known to be mild \cite{LR,BT7}, we have $\td u = B(u,u)$. In other words, by Theorem \ref{theorem:Lpdecay1}, the nonlinear part of the flow enjoys a stronger decay property compared to $u$. Heuristically, owing to the product structure of $B(f,g)$, the decay of $B(f,g)$ is the same as that of $f\cdot \nb g$.
The idea behind \eqref{theorem:pwdecay} is to use the following bi-integral formula which appeared in a related paper of Brandolese \cite{Brandolese}:
\EQ{\label{eq:bi-integral}
	u = \underbrace{P_0	-B(P_0,P_0)}_{P_1})+2B(P_0, \td u) - B( \td u,\td u).
}
Since $P_0$ is $\mathcal{O}(|x|^{-1})$ (see Section \ref{sec:heat}) and $\td u$ is $\mathcal{O}(|x|^{-2})$ (see Theorem \ref{theorem:Lpdecay1}), and based on the above heuristic, for $u_0\in L^{\I}_\loc(\R^3\setminus \{0\})\cap DSS$, as $|x|\to \I$ and $t\in [1,\la^2]$, we should have 
\[
2B(P_0, \td u) = \mathcal O ( |x|^{-3} )\text{ and }B( \td u,\td u) = \mathcal O (|x|^{-4}).
\]
Thus, $u-P_1$ heuristically decays faster than $u-P_0$. We prove this heuristic is merited and the same improvement is evident for higher Picard iterates. This argument is an example of the ``improvement property'' of Picard iterates which has been used widely in the literature, for example in \cite{GIP,Brandolese,AB}.

Item 2 of Theorem \ref{theorem:pwdecay} can be viewed as generalizing  the small data result  of Brandolese \cite[Theorem 2]{Brandolese} in the sense that the solutions in view satisfy, for $t\in [1,\la^2]$, the asymptotic formula
\[
u(x,t) = F(u_0) + \mathcal O (|x|^{-4}),
\]
where $F(u_0)$ can be explicitly computed from the data. Note that \cite{Brandolese} asserts additional structure about the solution which we do not pursue here.

The estimate \eqref{lim:seperation.scale} is interesting as it quantifies the rate at which two \textit{distinct} solutions with the same initial data can separate away from the origin, should two distinct solutions exist. Indeed, while the program of Jia and \v{S}ver\'{a}k \cite{JS,JS2} and the numerics of Guillod and \v{S}ver\'{a}k \cite{GuillodSverak} suggest non-uniqueness within the class of self-similar solutions to the non-forced Navier-Stokes equations,\footnote{Albritton, Bru\'e and Colombo have recently shown non-uniqueness  for related classes for the forced Navier-Stokes equations \cite{ABC}.} not much has been done to quantify how this non-uniqueness would evolve in general scenarios. Part $3$ of \cref{theorem:pwdecay} sheds some light on this. The upper bound arises from the quartic decay of first derivatives of the Oseen tensor.

\bigskip The above asymptotics are for \textit{rough} data, i.e.~data that can be discontinuous away from the origin. We now turn our attention to the case of data which is locally H\"older continuous away from the origin. This is the type of data that is considered in the original papers on the subject \cite{JS,Tsai-DSSI}.  Our theorem fills in the scale of algebraic decay rates between the $L^\I$ case asserted in Theorem \ref{theorem:Lpdecay1} and the $C^{1,\al}$ case in \eqref{ineq:pointwisedecay2}.  When $\al=1$ it recovers the bound obtained in \cite{LMZ}.

\begin{theorem}\label{theorem:tdu.pointwise.decay.CAl}Let $0<\alpha\leq 1$ and assume $u_0\in C^{\alpha}_\loc(\R^3\setminus \{0\})$, is divergence free and is $\la$-DSS. Assume $u$ is a DSS local energy solution with initial data $u_0$.  Then,
\EQ{\label{ineq.HolderDecayWithLog}|\td u| (x,t) \lesssim_{\|u_0\|_{L^2_\uloc}, \|u_0\|_{C^\al(A_0)}}
\begin{cases}
\frac{t^\frac{1+\al}{2}}{(\sqrt t+|x|)^{2+\al}}  & \al<1\\
\frac{t }{(\sqrt t+|x|)^{3}}\log(2+ \frac {|x|}{\sqrt{t}}) & \al=1
\end{cases},
}
for $|x|\geq R_0\sqrt t$.
\end{theorem}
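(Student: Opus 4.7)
The plan is to use the mild representation $\tilde u = -B(u,u)$, valid since $u$ is a DSS local energy solution and hence mild, and to exploit H\"older regularity of $u\otimes u$ in the far field to extract an extra factor $(\sqrt t/(|x|+\sqrt t))^\al$ over the decay rate in \cref{theorem:Lpdecay1}. Integration by parts yields
\[
\tilde u(x,t) = \int_0^t\!\!\int_{\R^3}\nb_y S(x-y,t-s):(u\otimes u)(y,s)\,dy\,ds,
\]
where $S$ denotes the Oseen tensor, satisfying the pointwise bound $|\nb^\ell S(z,\tau)|\lesssim (|z|+\sqrt\tau)^{-3-\ell}$ and the cancellation $\int_{|z|<\rho}\nb S(z,\tau)\,dz = -\int_{|z|=\rho}S(z,\tau)\otimes n\,d\si(z)$. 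By the DSS scaling, it suffices to treat $(x,t)$ with $t\in[1,\la^2]$ and $|x|\geq R_0$ large.

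The first step is to promote the pointwise bound $|u(y,s)|\lesssim (|y|+\sqrt s)^{-1}$ from \cref{theorem:Lpdecay1} (case $q=\I$) to a quantitative local H\"older estimate on $u$ in the region of regularity. Since $u$ is smooth on $\{|y|\geq R_0\sqrt s\}$ by \cref{theorem:regularity}, interior parabolic estimates applied on cylinders of radius $\sim |x|+\sqrt s$ centered at $(x,s)$, together with DSS-rescaling of $[u_0]_{C^\al(A_0)}$ to the far annuli $\la^k A_0$, should yield
\[
[u(\cdot,s)]_{C^\al(B_{|x|/4}(x))} \lesssim (|x|+\sqrt s)^{-(1+\al)}, \qquad [u\otimes u(\cdot,s)]_{C^\al(B_{|x|/4}(x))} \lesssim (|x|+\sqrt s)^{-(2+\al)}.
\]

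With these in hand, I split the spatial integral at $|y-x|=|x|/4$. On the near region, the cancellation identity allows replacing $(u\otimes u)(y,s)$ by $(u\otimes u)(y,s)-(u\otimes u)(x,s)$ (plus a boundary term of the correct size), gaining the factor $|y-x|^\al$; a routine estimate gives
\[
\int_{|y-x|<|x|/4}\frac{|y-x|^\al}{(|y-x|+\sqrt{t-s})^4}\,dy\cdot \frac{1}{(|x|+\sqrt s)^{2+\al}} \lesssim \frac{(t-s)^{(\al-1)/2}}{(|x|+\sqrt s)^{2+\al}}
\]
for $\al<1$, while at $\al=1$ the angular integral becomes $\int_0^{|x|/4}r^{-1}\,dr$ on the range $r\gg\sqrt{t-s}$, producing a $\log(2+|x|/\sqrt{t-s})$ factor. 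On the far region $|y-x|\geq|x|/4$, the kernel decay $|\nb S|\lesssim |x|^{-4}$ combines with the $L^\I$ bound from \cref{theorem:Lpdecay1} on $\{|y|\geq R_0\sqrt s\}$ and the $L^2_\uloc$ bound of a local energy solution near the origin to give a contribution of smaller order. Integrating in $s\in(0,t)$ and using $|x|+\sqrt s\sim |x|\geq R_0\sqrt t$ yields the claimed rate $t^{(1+\al)/2}(|x|+\sqrt t)^{-(2+\al)}$, with the logarithmic correction at $\al=1$. The main obstacle is establishing the local H\"older estimate on $u$ with the sharp decay $(|x|+\sqrt s)^{-(1+\al)}$: this requires combining interior parabolic Schauder theory with a careful bookkeeping of how $[u_0]_{C^\al}$ rescales across DSS annuli, so that the cancellation argument closes with the correct power of $|x|+\sqrt t$.
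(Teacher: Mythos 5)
Your kernel-cancellation idea in the near region is sound arithmetic: gaining a factor $|y-x|^\al$ makes $\int_0^{|x|/4} r^{2+\al}(r+\sqrt{t-s})^{-4}\,dr$ converge for $\al<1$ and produce a logarithm only at $\al=1$, which is a correct physical-space counterpart of the mechanism the paper implements with the fractional Laplacian. The far region and the boundary term from the cancellation are also of the right size. But the step you yourself flag as ``the main obstacle'' is a genuine gap, not a bookkeeping issue, and the tools you cite do not close it. The weighted bound $[u(\cdot,s)]_{C^\al(B_{|x|/4}(x))}\lesssim (|x|+\sqrt s)^{-1-\al}$ does not follow from interior parabolic estimates: $\e$-regularity (\cref{lemma:ereg}, as deployed in \cref{lemma:DSSLqDecay}) only gives $|\nb u|\lesssim \sqrt s^{-1}(|x|+\sqrt s)^{-1}$ --- the gradient of $u$ does \emph{not} decay like $|x|^{-2}$, since already $\nb P_0$ decays only like $|x|^{-1-\al}$ by \cref{lemma:heat.equation.pointwise.decay.alpha} --- and feeding $|\nb(u\otimes u)|\lesssim |x|^{-2}$ into your cancellation yields $|x|^{-2}\log$ rather than $|x|^{-2-\al}$. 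Moreover, after DSS rescaling your estimate is equivalent to $\sup_{0<\td s\le \la^2}[u(\cdot,\td s)]_{C^\al(A_1)}\lesssim 1$, i.e.\ uniform propagation of the data's $C^\al$ regularity through the \emph{nonlinear} evolution all the way down to $\td s\to 0^+$; local smoothing (\cref{theorem:JSlocalsmoothing}) only provides $u-a\in C^\gamma_{par}$ for an unspecified $\gamma(q)\in(0,1)$ that may be smaller than $\al$, so it does not supply this either.

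This is exactly the difficulty the paper's proof is engineered to avoid. It decomposes $\td u = B(u,\td u)+B(\td u,P_0)+B(P_0,P_0)$, so that every term except $B(P_0,P_0)$ carries a factor $\td u = \mathcal O\bigl(\sqrt s\,(|y|+\sqrt s)^{-2}\bigr)$ from \cref{theorem:Lpdecay1} and can be handled with pointwise bounds alone (yielding $t(|x|+\sqrt t)^{-3}$, which is acceptable). The H\"older regularity of the data is then exploited only on the \emph{linear} flow $P_0=e^{t\De}u_0$, where the uniform-in-time estimates are provable directly (\cref{lemma:heat.equation.pointwise.decay.alpha}, \cref{lemma:decayCaloricFractional}, \cref{lemma:CommutatorDecay}), and the logarithm for $\al<1$ is removed by writing $B(P_0,P_0)=\int \nb\La^{-\be}S \ast \La^\be(P_0\otimes P_0)$ with $\be\in(\al,\min(1,2\al))$ and invoking \cref{prop:DL.Oseen.est}. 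To repair your argument you must either prove the uniform-up-to-$t=0$ propagation of $C^\al$ regularity for the full solution $u$ (a nontrivial additional bootstrap), or restrict your cancellation argument to the term $B(P_0,P_0)$ after peeling off the $\td u$-terms as the paper does.
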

 
In analogy with Theorem \ref{theorem:pwdecay}, one can also pursue improved decay estimates for $u-P_k$ where $k>0$. This improvement terminates in one step once quartic decay is reached.

The fact that the logarithm can be dropped when $\al<1$ is subtle as  we presently explain. 
Observe that 
\[
\td u = B\big(\td u,\td u\big) + B\big(\td u,{P_0}\big) + B\big({P_0},B(u,u-{P_0})+B(u,{P_0}) \big) + B({P_0},{P_0}).
\]
Using pointwise estimates for the Oseen tensor, the first three terms written above all satisfy bounds \textit{stronger} than $(|x|+1)^{-2-\alpha}$ for $t\in [1,\la^2]$. 
Therefore, the term $B({P_0},{P_0})$ will fully determine our estimates for $\td u$. This term has the benefit that we can write it explicitly in terms of $u_0$:
\[
B({P_0},{P_0}) = \int_0^t e^{(t-s)\Delta}\mathbb P \nb\cdot  (  e^{t\Delta}u_0\otimes e^{t\Delta}u_0 )\,ds.
\]
Because $u_0$ is only in $C^\al_\loc(\R^3\setminus\{0\})\cap DSS$, $\nb e^{t\Delta}u_0$ only decays like $|x|^{-1-\al}$. Hence, $ \nb\cdot (  e^{t\Delta}u_0\otimes e^{t\Delta}u_0 )$ decays like $|x|^{-2-\al}$. When we try to extract the same estimate for $B({P_0},{P_0})$, the kernel of the Oseen tensor, which is a cubic power of $|x|^{-1}$, must be integrated and this introduces a logarithm.  However, our analysis of the heat equation in Section \ref{sec:heat} shows    that  $|\La^\be e^{t\Delta}u_0 |$ also decays like $|x|^{-1-\al}$ where $\La=(-\Delta)^{\frac 1 2}$ and $\al<\be<1$. 
After navigating a commutator, this implies
 \[
|\La^{\be} (  e^{t\Delta}u_0\otimes e^{t\Delta}u_0 )| \lesssim |x|^{-2-\al}, \quad t\in [1,\la^2]
\]
Then, we consider a revised formula for $B({P_0},{P_0})$, namely
\[
B({P_0},{P_0}) = \int_0^t \La^{-\be} \nb \mathbb P  e^{(t-s)\Delta}\La^{\be} ( e^{t\Delta}u_0\otimes e^{t\Delta}u_0 )\,ds.
\]
This no longer has a cubic power of $|x|^{-1}$ in the Oseen part (and so does not lead to a logarithm) but still has decay on the order of $|x|^{-2-\al}$ from the product part.

Finally, we improve a result in \cite{LMZ2} which says the optimal   decay rate is achieved without a logarithm as in \eqref{ineq:pointwisedecay2} when the data is in $C^{1,1}_\loc(\R^3\setminus \{0\})\cap SS$. 
\begin{theorem}\label{theorem:tdu.pointwise.decay.C1Al}
Let $0<\alpha\leq 1$ and assume $u_0\in C^{1,\alpha}_\loc(\R^3\setminus \{0\})\cap DSS$ and is divergence free.  Assume $u$ is a DSS local energy solution with initial data $u_0$.  Then,
\EQ{\label{ineq.HolderDecayCubic}
{|\td u |}(x,t) \le\frac{Ct}{(|x|+\sqrt{t})^3},}
for $|x|\geq R_0\sqrt t$.
\end{theorem}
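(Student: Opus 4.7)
The plan is to adapt the bi-integral decomposition used in the proof of \cref{theorem:tdu.pointwise.decay.CAl}, exploiting the extra Hölder derivative available in $C^{1,\al}$. Starting from the mild formulation $\td u = B(u,u)$ and expanding via $u = P_0 + \td u$, write
\[
\td u = B(\td u, \td u) + B(\td u, P_0) + B(P_0, \td u) + B(P_0, P_0),
\]
and combine the preliminary bound $|\td u|(x,t)\lesssim t\log(2+|x|/\sqrt t)/(|x|+\sqrt t)^3$ from \cref{theorem:tdu.pointwise.decay.CAl} applied at $\al=1$ with the standard pointwise bounds $|K(x,s)|\lesssim(|x|+\sqrt s)^{-3}$ and $|\nb K(x,s)|\lesssim(|x|+\sqrt s)^{-4}$ for the Oseen tensor. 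A routine bilinear convolution estimate, splitting the spatial integral into the regions near $x$ and near $0$, shows that each of the three terms $B(P_0,\td u)$, $B(\td u,P_0)$, $B(\td u,\td u)$ decays strictly faster than the target rate $t/(|x|+\sqrt t)^3$ (for instance $B(P_0,\td u)$ is of order $t^{3/2}/(|x|+\sqrt t)^4$ on $|x|\geq R_0\sqrt t$), so they are harmless.

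The heart of the proof is the bound $|B(P_0,P_0)|(x,t)\lesssim t/(|x|+\sqrt t)^3$ with no logarithm. Under the $C^{1,\al}$ hypothesis, the heat-equation analysis of Section~3 gives $|\nb P_0(x,t)|\lesssim(|x|+\sqrt t)^{-2}$, so $|\nb\cdot(P_0\otimes P_0)(y,s)|\lesssim(|y|+\sqrt s)^{-3}$; the direct estimate using the cubic Oseen kernel then produces the target rate but with a logarithmic loss. To remove it, I would redistribute fractional regularity exactly as in the $\al<1$ half of \cref{theorem:tdu.pointwise.decay.CAl}: pick $\be\in(0,\al)$ and rewrite
\[
B(P_0,P_0) = \int_0^t \La^{-\be}\,\nb\mathbb{P}\,e^{(t-s)\De}\,\La^{\be}(P_0\otimes P_0)\,ds.
\]
The smoothed Oseen kernel $\La^{-\be}\nb\mathbb{P}e^{(t-s)\De}$ has pointwise bound of order $(|\cdot|+\sqrt{t-s})^{-(4-\be)}$, while Section~3's fractional heat-equation estimates combined with a commutator bound give $|\La^\be(P_0\otimes P_0)(y,s)|\lesssim(|y|+\sqrt s)^{-(2+\be)}$. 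Executing the resulting space--time convolution with the same near-$x$/near-$0$/intermediate splitting then yields $t/(|x|+\sqrt t)^3$ cleanly, since the integrability exponent no longer sits exactly at the logarithmic threshold.

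The main technical obstacle will be the commutator estimate needed to pass from a pointwise bound on $\La^{1+\be}P_0$ (which follows from $C^{1,\al}$ regularity of $u_0$ away from the origin plus interpolation within the heat-equation analysis of Section~3) to the product bound on $\La^\be(P_0\otimes P_0)$. Because $u_0$ is merely in the local energy class near the origin, $[\La^\be, P_0\otimes\,\cdot\,]$ must be controlled pointwise only on the region $|y|\geq R_0\sqrt s$ using localized regularity; this parallels the analogous step in the proof of \cref{theorem:tdu.pointwise.decay.CAl} for $\al<1$, raised by one derivative. Once the commutator and the convolution are in place, the argument works uniformly in $\la$ and requires regularity of $u$ only on $\{|x|\geq R_0\sqrt t\}$, giving the stated improvement over \cite{Tsai-DSSI,LMZ2}.
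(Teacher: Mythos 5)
Your setup (the decomposition of $\td u$, the treatment of the cross terms, and the idea of redistributing $\be$ fractional derivatives between the Oseen kernel and the product $P_0\otimes P_0$) matches the paper's strategy, but the final step for $B(P_0,P_0)$ contains a genuine gap: the convolution you describe does not produce the cubic rate. With the kernel bound $|\nb\La^{-\be}S|\lesssim(|x-y|+\sqrt{t-s})^{-(4-\be)}$ and the product bound $|\La^\be(P_0\otimes P_0)|\lesssim(|y|+\sqrt s)^{-(2+\be)}$, the estimate \eqref{ineq:Tsai.integral} with $a=4-\be$, $b=2+\be$ gives $R^{-(4-\be)}+R^{-(2+\be)}+R^{-3}$, whose dominant term is $R^{-(2+\be)}$ since $\be<1$. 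The culprit is the region $|x-y|\le|x|/2$: there the product factor is only of size $R^{-(2+\be)}$ and the kernel, having local integrability exponent $4-\be>3$, contributes an $O(1)$ mass, so this region alone already yields $R^{-(2+\be)}$. In other words, the argument you propose only re-derives (a variant of) \cref{theorem:tdu.pointwise.decay.CAl}; it uses nothing beyond $C^\al$ regularity, because the bound $|\La^\be(P_0\otimes P_0)|\lesssim(|y|+\sqrt s)^{-(2+\be)}$ is already available for $C^\al$ data. You mention that a bound on $\La^{1+\be}P_0$ should be available under the $C^{1,\al}$ hypothesis, but your convolution never actually invokes it.

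The missing idea is a cancellation in the near-field. The paper uses that $\int\nb\La^{-\be}S\,dx=0$ (the Fourier transform vanishes at the origin since $\be<1$) to subtract $\La^\be(P_0\otimes P_0)(x,s)$ from the integrand on $\{|x-y|\le|x|/2\}$, and then applies the mean value theorem to gain a factor $|x-y|\cdot\|\nb\La^\be(P_0\otimes P_0)\|_{L^\I(B_{|x|/2}(x))}$. The commutator lemma (\cref{lemma:CommutatorDecay} with $|m|=1$) together with \cref{cor:DecayFractionLaplacian} gives $|\nb\La^\be(P_0\otimes P_0)|\lesssim(|x|+1)^{-3-\be}$ --- this is precisely where the extra derivative from $C^{1,\al}$ enters --- and then
\begin{equation*}
\int_0^t\int_{|x-y|\le|x|/2}\frac{|x-y|}{(|x-y|+\sqrt{t-s})^{4-\be}}\,dy\,ds\cdot\sup_s\|\nb\La^\be(P_0\otimes P_0)\|_{L^\I(B_{|x|/2}(x))}\lesssim|x|^{\be}\cdot|x|^{-3-\be}=|x|^{-3},
\end{equation*}
which upgrades the near-field contribution from $R^{-(2+\be)}$ to $R^{-3}$. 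The two complementary pieces on $\{|x-y|>|x|/2\}$ are then handled by direct estimates. Without this vanishing-moment/mean-value step, no choice of $\be$ in your redistribution can reach the exponent $3$, since the convolution always returns $\max(R^{-a},R^{-b})$ with $a+b=6$ and $a\ne b$.
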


This says that, although a logarithm appears in the $C^{1}_\loc(\R^3\setminus \{0\})$ case in Theorem \ref{theorem:tdu.pointwise.decay.CAl}, it disappears when any higher smoothness is assumed. 
No algebraic gain is achieved as $\al$ increases due to the explicit estimates for small data self-similar solutions with smooth initial data in \cite{Brandolese}.
Our proof of Theorem \ref{theorem:tdu.pointwise.decay.C1Al} is substantially different than  the proof for the $C^{1,1}$ case in \cite{LMZ2} and utilizes the fractional Laplacian in a similar manner to that discussed above.

\section{Preliminaries}\label{sec:solutionclasses}

\subsection{Function spaces}

Here we introduce function spaces which play an important role in this paper or the surrounding subject area. Solutions constructed based on these spaces will then be discussed.

The $L^p$ spaces and $L^p_\loc$ classes are defined in the classical way. 
Uniformly local versions are denoted $L^p_\uloc$ and defined by finiteness of the norm
\[
\| f\|_{L^p_\uloc} :=\sup_{x_0\in \R^3} \| f\|_{L^p(B_1(x_0))}.
\]We denote by $E^p$ the closure of $C_c^\I$ in $L^p_\uloc$. This class is characterized by the condition
\[
\lim_{R\to \I} \| f\|_{L^p_\uloc(\R^3\setminus B_R)}=0.
\]
The endpoint Lorentz spaces $L^{p,\I}$ are defined by finiteness of the quasinorm
\[
\| f\|_{L^{p,\I}} := \sup_{\si>0} \si^p | \{x:\si<|f(x)| \}|. 
\]

Let $\mathcal K_p$ be the Kato class defined by the finiteness of the norm
\[
\| u \|_{\mathcal K_p} :=\esssup_{t>0} t^{\frac 1 2(1-\frac 3 p)} \| u(t)\|_{L^p}.
\] 
The Besov spaces relevant to DSS solutions can be defined using Kato classes.
Assuming $3<p\leq \I$, $f\in \Bp(\R^3)$ if and only if 
\[
\| e^{t\Delta} f\|_{\mathcal K_p} <\I,
\] 
the above norm being equivalent to the norm classically defined using Littlewood-Paley.

The Koch-Tataru space $\operatorname{BMO}^{-1}$ is defined by finiteness of the following norm:
\EQ{\|e^{t\Delta}u_0\|_{\operatorname{BMO}^{-1}} := \esssup_{t\in(0,\I)} t^{\frac{1}{2}}\|e^{t\Delta}u_0\|_{L^\I(\R^3)}+\sup_{x\in\R^3}\sup_{R\in(0,\I)}R^{-\frac{3}{2}}\|e^{t\Delta}u_0\|_{L^2((B(x,R)\times (0,R^2))}.}

We define H\"older classes and spaces as usual. In particular, for $0<\al\leq 1$ we let 
\[
 [f]_{C^{0,\al}(\Om)} =  \sup_{x\neq y\in \Om} \frac {|f(x)-f(y)|}{|x-y|^\al}.
\] 
 We make the abbreviation $C^\al = C^{0,\al}$ when there is no confusion. We say $f\in C^\al_\loc ( \Om)$ if $f\in C^{\al}(\Om')$ for all compact subsets $\Om'$ of $\Om$. Finally, we set
\[
\|f\|_{C^{k,\al}(\Om)} = \max_{|\be|\leq k}\, \sup_{x\in\Om} |D^{\be} f|(x) + \max_{|\be|=k} [D^\be f]_{C^{0,\al}(\Om)}, 
\]
and define $C^{k,\al}_\loc$ in analogy with $C^\al_\loc$.

\subsection{Mild solutions, the Oseen tensor, and Picard iterates}\label{sec:mild.sol} A mild solution is a solution to \eqref{eq:NS} with the form
\[
u(x,t)=e^{t\Delta}u_0 -\int_0^te^{(t-s)\Delta} \mathbb P \nb \cdot (u\otimes u)\,ds =: e^{t\Delta}u_0 - B(u,u),
\]
which is just Duhamel's formula applied to the following version of \eqref{eq:NS}:
\[
\partial_t u -\Delta u = - \mathbb P ( u\cdot \nb u ).
\]
Mild solutions are not expected to be regular in general \cite{LR,BT7}, but in the classical literature they were introduced in the context of strong solutions \cite{FJR,Kato} obtained  as a limit of Picard iterates. An important line of research concerns which function spaces guarantee global well-posedness for small data. Example of spaces where a positive answer is available are 
\[
L^3 \subset L^{3,\I} \subset \Bp (3<p<\I)\subset \operatorname{BMO}^{-1}.
\] 
With the exception of $L^3$, these spaces are \emph{ultracritical} in that the closure of the test functions is not dense in them. Ultracritical spaces include self-similar and DSS data. 

All of the self-similar and DSS solutions constructed in \cite{JS,Tsai-DSSI,LR2,BT1,FDLR,AB} are mild due to sufficient conditions in \cite{LR,BT7}. In particular, for our work, the local pressure expansion in the definition of local energy solutions (defined in the next section) is equivalent to being mild and therefore we are free to use mild solution estimates in our analysis.

We will need some details on the structure of the Oseen tensor which we presently recall, drawing on the sources \cite{LR,BV,MaTe,Tsai-book,LMZ2,VAS}. 
\begin{lemma}[Bound for derivatives of the Oseen tensor]\label{lem:oseen.structure}
The operator $e^{t\De}\mathbb{P} \nb \cdot $ (where $\mathbb{P}$ is the Helmholtz projection) in $\R^3$ with kernel $\nb S(x,t)$ has the following bound:
\[|D^l_x \pd_t^m S_{j,k}(x,t)| \le C (t^\frac 1 {2} + |x|)^{-3-l-2m}, \, \forall l,m \in \Z^+.\]
\end{lemma}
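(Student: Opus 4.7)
The plan is to reduce both the spatial and temporal derivatives to a single question about pointwise decay of spatial derivatives of the Oseen profile at time $t=1$, and then harvest the bound from classical estimates on the heat kernel and the Riesz kernel. Concretely, I would first record two structural facts about $S$: it satisfies the heat equation $\pd_t S = \De S$ componentwise (because $\mathbb P$ commutes with both $e^{t\De}$ and $\De$), and it is parabolically self-similar, namely $S_{j,k}(x,t) = t^{-3/2}\tilde S_{j,k}(x/\sqrt t)$ with $\tilde S_{j,k}(y) := S_{j,k}(y,1)$. The heat-equation identity lets me replace $\pd_t^m$ by $\De^m$, so that
\[
D_x^l \pd_t^m S_{j,k}(x,t) = D_x^{l+2m} S_{j,k}(x,t) \cdot (\text{constant combinatorial factors}),
\]
reducing the problem to a bound of the form $|D_x^n S_{j,k}(x,t)| \lesssim (\sqrt t + |x|)^{-3-n}$ for every $n \in \N_0$.

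Next, I would use the self-similar scaling to reduce to $t=1$. Differentiating the identity $S_{j,k}(x,t) = t^{-3/2}\tilde S_{j,k}(x/\sqrt t)$ gives $D_x^n S_{j,k}(x,t) = t^{-3/2 - n/2}(D^n \tilde S_{j,k})(x/\sqrt t)$, so it suffices to prove the stationary estimate
\[
|D^n \tilde S_{j,k}(y)| \leq C_n (1+|y|)^{-3-n},
\]
since plugging in $y = x/\sqrt t$ and multiplying by $t^{-3/2-n/2}$ yields $C_n(\sqrt t + |x|)^{-3-n}$. For this stationary estimate I would use the decomposition $S_{j,k}(x,t) = \de_{jk}\Ga(x,t) + \pd_j\pd_k \phi(x,t)$, where $\Ga$ is the Gaussian heat kernel and $\phi(\cdot,t) = \Ga(\cdot,t) * N$ with $N(x)=(4\pi|x|)^{-1}$ the Newton potential. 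The Gaussian contribution is Schwartz and thus satisfies any polynomial decay; the term $\pd_j \pd_k \phi(\cdot,1)$ is bounded and smooth near the origin (since $\Ga(\cdot,1)$ is Schwartz and $N$ is locally integrable), while at infinity one obtains the $|y|^{-3}$ asymptotics from $\pd_j \pd_k N(y)$, because $\int \Ga(\cdot,1) = 1$ and $\Ga(\cdot,1)$ has rapidly decaying tails. Each additional spatial derivative on $\pd_j\pd_k \phi(\cdot,1)$ increases the decay by one power, as is standard for derivatives of the Newton potential.

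Finally, combining the scaling with the stationary bound yields $|D_x^n S_{j,k}(x,t)| \lec (\sqrt t + |x|)^{-3-n}$, and setting $n = l+2m$ produces the claimed inequality $|D_x^l \pd_t^m S_{j,k}(x,t)| \lec (\sqrt t + |x|)^{-3-l-2m}$. The only non-routine step is the derivative bound on $\pd_j\pd_k \phi(\cdot,1)$; this is the main technical point, but it is classical and available in essentially the form needed from \cite{LR,Tsai-book,VAS}, so in practice I would state the reduction via scaling and $\pd_t = \De$ explicitly and cite these references for the stationary decay of derivatives of $\tilde S$, keeping the proof short.
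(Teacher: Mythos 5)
The paper does not prove this lemma at all: it is recalled from the literature (Solonnikov, Lemari\'e-Rieusset, Tsai's book), so there is no in-paper argument to compare against. Your reduction --- using that each $S_{j,k}$ solves the heat equation to trade $\pd_t^m$ for $\De^m$, parabolic self-similarity to reduce to $t=1$, and the decomposition $S_{j,k}=\de_{jk}\Ga+\pd_j\pd_k(N*\Ga)$ for the stationary decay --- is exactly the standard argument behind those references and is correct. The only point to handle with a little care is the large-$|y|$ bound $|D^n\pd_j\pd_k(N*\Ga)(y,1)|\lesssim |y|^{-3-n}$: writing it as $(\pd_j\pd_k N)*\Ga$ requires the principal-value/cancellation structure of the Calder\'on--Zygmund kernel (equivalently, the vanishing moments of $\pd_j\pd_k\Ga(\cdot,1)$ up to order one), since a naive splitting of the convolution only yields $|y|^{-1}$; you correctly flag this as the one technical step and defer it to the cited sources, which is precisely what the paper itself does.
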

In \cite{DL}, pointwise bounds for the Oseen tensor involving fractional powers of the Laplacian are established and we recall the details modified slightly to match our notation.
\begin{proposition}[\cite{DL}, Proposition 3.1]\label{prop:DL.Oseen.est} For any integer $m\ge 0$ and $-1<\al\le 1$, 
	\EQ{\left|(|x|+1)^{3+m+\al}D^m\La^\al S_{j,k}\right|(x,1) \lesssim_{m,\al} 1,\qquad \forall x\in \R^d.}
\end{proposition}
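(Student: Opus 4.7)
\textbf{Proof plan for \cref{prop:DL.Oseen.est}.} The plan is to reduce to the case $t=1$ by scaling (dimensional analysis gives $S_{j,k}(x,t)=t^{-3/2}S_{j,k}(x/\sqrt t,1)$, and $\Lambda^\al$ scales homogeneously of degree $\al$), and to exploit that $D^m$ and $\La^\al$ are Fourier multipliers and therefore commute, so $D^m\La^\al S_{j,k}(\cdot,1) = \La^\al g$ with $g:=D^m S_{j,k}(\cdot,1)$. From \cref{lem:oseen.structure} combined with the Solonnikov decomposition of the Oseen tensor, I would take as input the integer-order bound $|D^l g(x)|\lesssim_{l,m} (1+|x|)^{-3-m-l}$ for every $l\in\N_0$. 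The task is then reduced to showing that such $g$ satisfies $|\La^\al g(x)|\lesssim (1+|x|)^{-3-m-\al}$ for all $-1<\al\le 1$.

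For $0<\al<1$ I would use the hypersingular representation
\EQN{
\La^\al g(x) = c_\al \,\mathrm{p.v.}\!\int_{\R^3} \frac{g(x)-g(y)}{|x-y|^{3+\al}}\ind y.
}
Set $r=\max(1,|x|)/2$ and split into the near region $|y-x|\le r$ and the far region. On the near region, $B_r(x)\subset\{|z|\gtrsim 1+|x|\}$, so the mean value theorem and the bound on $\nabla g$ give $|g(x)-g(y)|\lesssim |x-y|(1+|x|)^{-4-m}$; integrating $|x-y|^{1-(3+\al)}$ over $B_r(x)$ produces $r^{1-\al}(1+|x|)^{-4-m}\lesssim (1+|x|)^{-3-m-\al}$, with the positive exponent $1-\al$ keeping the radial integral convergent. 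On the far region I would use the direct decay $|g(x)|+|g(y)|\lesssim (1+|x|)^{-3-m}+(1+|y|)^{-3-m}$, split further into $|y|\ge |x|/2$ (where the pointwise decay of $g(y)$ is controlled by $|x|$) and $|y|<|x|/2$ (where $|x-y|\gtrsim|x|$), and check that both pieces sum to $(1+|x|)^{-3-m-\al}$.

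For $-1<\al<0$ I would use the Riesz potential representation $\La^\al g(x)=c_\al\int |x-y|^{-3-\al} g(y)\ind y$, which is absolutely convergent thanks to the $(1+|y|)^{-3-m}$ tail of $g$, and split $\R^3$ into $\{|y|\le|x|/2\}$, $\{|y-x|\le|x|/2\}$, and the complement, using $g$'s decay in each region. For $\al=1$ I would factor $\La=\La^{1/2}\circ\La^{1/2}$ and iterate the $0<\al<1$ estimate, which is cleaner than invoking Riesz transforms directly; alternatively one can use $\La g=\sum_j R_j\partial_j g$ and observe that $R_j\partial_j g$ again belongs to the Oseen/heat family for which the structural decay is already known.

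The main technical obstacle will be uniformity as $\al\to\pm 1$, where the near-region integral $\int_{B_r(x)}|x-y|^{-2-\al}\ind y$ and the Riesz tail $\int|x-y|^{-3-\al}\ind y$ formally diverge; the choice of splitting radius and the use of a first-order Taylor expansion on the near piece are designed precisely to absorb these divergences into the positive exponent $1-\al$ (resp.~$|\al|$), which is bounded away from $0$ on any compact subinterval of $(-1,1)$. A secondary bookkeeping issue is the far-field tail of $S_{j,k}(\cdot,1)$, which I would control by separating the heat-kernel part (rapid decay, harmless) from the $\nabla\nabla(-\Delta)^{-1}$-of-heat-kernel part (exact $|x|^{-3-m}$ decay with smooth angular profile), applying the integral representation only to the latter piece.
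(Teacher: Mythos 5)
First, a point of reference: the paper does not prove this proposition at all --- it is quoted verbatim from Dong and Li \cite{DL}, Proposition 3.1 --- so there is no internal argument to compare yours against, and I am assessing your plan as a standalone proof. Your skeleton (commute $D^m$ past $\La^\al$; hypersingular integral with a first-order Taylor expansion on $B_r(x)$, $r\sim 1+|x|$, for $0<\al<1$; Riesz potential for $-1<\al<0$; composition $\La=\La^{1/2}\circ\La^{1/2}$ for $\al=1$) is sound, and the near-region estimate and the $g(x)$-tail term close exactly as you describe. Also note the reduction to $t=1$ is vacuous, since the statement is already posed at $t=1$.

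The genuine gap is in the far region $\{|y|\le |x|/2\}$ when $m=0$. There your only input is the modulus bound $|g(y)|\lesssim (1+|y|)^{-3}$, and
\[
\int_{|y|\le |x|/2}\frac{|g(y)|}{|x-y|^{3+\al}}\,dy\ \lesssim\ |x|^{-3-\al}\int_{|y|\le |x|/2}(1+|y|)^{-3}\,dy\ \sim\ |x|^{-3-\al}\log(2+|x|),
\]
which misses the claimed bound by a logarithm; the same loss occurs in the corresponding region of your Riesz-potential argument for $-1<\al<0$. This cannot be repaired by decay alone, because $S_{j,k}(\cdot,1)$ genuinely has a non-integrable $|y|^{-3}$ tail. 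What saves the estimate is cancellation, which your final paragraph gestures at (``smooth angular profile'') but never uses: writing $S_{j,k}(\cdot,1)=\delta_{jk}\Gamma_1+\partial_j\partial_k(-\Delta)^{-1}\Gamma_1$, the second piece equals $\partial_j\partial_k\frac{1}{4\pi|y|}$ plus a rapidly decaying remainder for $|y|\ge 1$, and $\partial_j\partial_k|y|^{-1}=(3y_jy_k-\delta_{jk}|y|^2)|y|^{-5}$ is homogeneous of degree $-3$ with mean zero on spheres. Expanding $|x-y|^{-3-\al}=|x|^{-3-\al}+\mathcal O(|y|\,|x|^{-4-\al})$ on $\{1\le|y|\le|x|/2\}$, the leading term annihilates the logarithmically divergent part by this cancellation, and the remainder contributes $|x|^{-4-\al}\int_{|y|\le|x|/2}|y|^{-2}\,dy\lesssim|x|^{-3-\al}$. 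Until this step is made explicit, your ``check that both pieces sum to $(1+|x|)^{-3-m-\al}$'' fails for $m=0$: a kernel with the same decay but a nonnegative angular profile really would produce the logarithm. (For $m\ge 1$ the modulus bound suffices and your argument goes through as written.)
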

We use this estimate in the following form
	\EQ{\left|\nb\La^{-\be} S_{j,k}\right|(x,t) \lesssim_{\be} (|x|+\sqrt{t})^{-4+\be},}
where $0<\be<1$.

We finally examine the boundedness of Picard iterates in several contexts.
By \cite[(1.8)-(1.10)]{MaTe}, if $u_0\in L^{3,\I}$, all Picard iterates are in $L^2_\uloc$ for all finite times, but bounds degrade at higher iterates. In particular, from \cite[(1.8)]{MaTe},
\[
\| e^{t\Delta}u_0\|_{L^2_\uloc} \lesssim \|u_0\|_{L^2_\uloc} \lesssim \|u_0\|_{L^{3,\I}}.
\]
Recalling that $P_0=e^{t\Delta}u_0$, it follows from \cite[(1.10)]{MaTe} that 
\EQ{\label{ineq:PicardIteratesL2uloc}
\| P_1-P_0\|_{L^2_\uloc }(t) &\leq \int_0^t \frac 1 {\sqrt{t-s} \sqrt s} \| P_0\|_{\mathcal K_\I} \| P_0\|_{L^2_\uloc}\,ds \lesssim \|u_0\|_{L^{3,\I}}^2,
}
for all times, where we are using $\|P_0\|_{\mathcal K_\I} \sim \| u_0\|_{\dot B^{-1}_{\I,\I}}\lesssim \|u_0\|_{L^{3,\I}}$ by continuous embeddings. This can be extended to higher Picard iterates noting that all Picard iterates satisfy 
\[
\sup_{t>0}\| P_k\|_{L^\I(\R^3)}(t) t^{\frac 1 2 } = \|P_k\|_{\mathcal K_\I}\lesssim_k \|u_0\|_{L^{3,\I}}.
\]
This well-known fact can be checked inductively by keeping track of the above quantity, as well as $\|P_k\|_{\mathcal K_4}$, using the following estimates:
\EQ{
\|P_{k+1}\|_{L^4}&\lesssim \int_0^t \frac 1 {(t-s)^{\frac 1 2}} \|P_k^2\|_{L^4}\,ds
\\&\lesssim \int_0^t \frac 1 {(t-s)^{\frac 1 2} s^{\frac 1 2+\frac 1 8}} \|P_k\|_{\mathcal K_\I}\|P_k\|_{\mathcal K_4}\,ds
\\&\lesssim t^{-\frac 1 8}\|P_k\|_{\mathcal K_\I}\|P_k\|_{\mathcal K_4},
}
and
\EQ{
\|P_{k+1}\|_{L^\I}&\lesssim \int_0^t \frac 1 {(t-s)^{\frac 1 2 +\frac 3 8}} \|P_k^2\|_{L^4}\,ds
\\&\lesssim \int_0^t \frac 1 {(t-s)^{\frac 78} s^{\frac 1 2+\frac 1 8}} \|P_k\|_{\mathcal K_\I}\|P_k\|_{\mathcal K_4}\,ds
\\&\lesssim t^{-\frac 1 2}\|P_k\|_{\mathcal K_\I}\|P_k\|_{\mathcal K_4}.
}

\subsection{Local energy solutions}\label{sec:LE.solutions}

In this subsection, we define local energy solutions and compile some known properties that will be needed in what follows. These solutions were introduced by Lemari\'e-Rieusset \cite{LR} and played an important role in the proof of local smoothing in \cite{JS}. Because $L^{3,\I}\subset L^2_\uloc$ it is a natural class in which to consider SS and DSS solutions \cite{JS,BT1}.

\begin{definition}[Local energy solutions]\label{def:localEnergy} A vector field $u\in L^2_{\loc}(\R^3\times [0,T))$, $0<T\leq \I$, is a local energy solution to \eqref{eq:NS} with divergence free initial data $u_0\in L^2_{\uloc}(\R^3)$, denoted as $u \in \cN(u_0)$, if:
\begin{enumerate}
\item for some $p\in L^{\frac{3}{2}}_{\loc}(\R^3\times [0,T))$, the pair $(u,p)$ is a distributional solution to \eqref{eq:NS},
\item for any $R>0$, $u$ satisfies
\begin{equation}\notag
\esssup_{0\leq t<R^2\wedge T}\,\sup_{x_0\in \R^3}\, \int_{B_R(x_0 )}\frac 1 2 |u(x,t)|^2\,dx + \sup_{x_0\in \R^3}\int_0^{R^2\wedge T}\int_{B_R(x_0)} |\nb u(x,t)|^2\,dx \,dt<\I,\end{equation}
\item for any $R>0$, $x_0\in \R^3$, and $0<T'< T $, there exists a function of time $c_{x_0,R}\in L^{\frac{3}{2}}_{T'}$\footnote{The constant $c_{x_0,R}(t)$ can depend on $T'$ in principle. This does not matter in practice and we omit this dependence.} so that, for every $0<t<T'$  and $x \in B_{2R}(x_0)$  
\EQ{ \label{eq:pressure.dec}
p(x,t)&= c_{x_0,R}(t)-\De^{-1}\div \div [(u\otimes u )\chi_{4R} (x-x_0)]
\\&\quad - \int_{\R^3} (K(x-y) - K(x_0 -y)) (u\otimes u)(y,t)(1-\chi_{4R}(y-x_0))\,dy 
,
}
in $L^{\frac{3}{2}}(B_{2R}(x_0)\times (0,T'))$
where  $K(x)$ is the kernel of $\De^{-1}\div \div$,
 $K_{ij}(x) = \pd_i \pd_j \frac {-1}{4\pi|x|}$, and $\chi_{4R} (x)$ is the characteristic function for $B_{4R}$. 
\item for all compact subsets $K$ of $\R^3$,  $u(t)\to u_0$ in $L^2(K)$ as $t\to 0^+$,
\item $u$ is suitable in the sense of Caffarelli-Kohn-Nirenberg, i.e., for all cylinders $Q\Subset Q_T$ and all non-negative $\phi\in C_c^\I (Q)$, we have  the \emph{local energy inequality}
\EQ{\label{ineq:CKN-LEI}
2\iint |\nb u|^2\phi\,dx\,dt \leq \iint |u|^2(\pd_t \phi + \De\phi )\,dx\,dt +\iint (|u|^2+2p)(u\cdot \nb\phi)\,dx\,dt,
}
\item the function
\EQ{
t\mapsto \int_{\R^3} u(x,t)\cdot {w(x)}\,dx,
}
is continuous in $t\in [0,T)$, for any compactly supported $w\in L^2(\R^3)$.
\end{enumerate}
\end{definition}

Local energy solutions are known to satisfy certain \textit{a priori} bounds \cite{LR}. For example,in \cite{JS}, the following a priori bound is proven: 
Let $u_0\in E^2$, $\div u_0=0$, and assume $u\in \mathcal N (u_0)$.  For all $r>0$ we have
\begin{equation}\label{ineq.apriorilocal}
\esssup_{0\leq t \leq \sigma r^2}\sup_{x_0\in \R^3} \int_{B_r(x_0)}\frac {|u|^2} 2 \,dx\,dt + \sup_{x_0\in \R^3}\int_0^{\sigma r^2}\int_{B_r(x_0)} |\nabla u|^2\,dx\,dt <CA_0(r) ,
\end{equation}
where
\[
A_0(r)=rN^0_r= \sup_{x_0\in \R^3} \int_{B_r(x_0)} |u_0|^2 \,dx,
\] 
and
\begin{equation}\label{def.sigma}
\si=\sigma(r) =c_0\, \min\big\{(N^0_r)^{-2} , 1  \big\},
\end{equation}
for a small universal constant $c_0>0$.

The following theorem is a re-statement of results in \cite{KMT2}. It serves is the mathematical foundation for the parameter $R_0$ used throughout this paper. 

\begin{theorem}[Far-field regularity]\label{theorem:regularity}
Fix $\la>1$.
Let $u$ be a $\la$-DSS local energy solution of the Navier-Stokes equations in $\R^3\times(0,T)$ with divergence free, $\la$-DSS initial data $u_0\in E^2$. 
Then, there exists $R_0 = R_0 (u_0)$ so that  
$u$ is smooth  and bounded on  
\[
\{  (x,t): |x| \geq  R_0\sqrt  t  ; 1\leq t\leq \la^2   \}.
\]  
This implies smoothness on $\{  (x,t): |x|\geq  R_0 \sqrt t      \}$.
Furthermore, there exists $\la_0(u_0)>1$ so that if $1\leq \la \leq \la_0$ then $u$ is globally smooth.
\end{theorem}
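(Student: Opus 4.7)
The plan is to first establish smoothness on the fundamental slab
\EQN{
S:=\{(x,t):|x|\geq R_0\sqrt{t},\ 1\leq t\leq \la^2\},
}
and then propagate to all of $\{|x|\geq R_0\sqrt{t}\}$ via the DSS identity and its iterates $u(x,t)=\la^{-k}u(\la^{-k}x,\la^{-2k}t)$ for $k\in\Z$: every point with $|x|\geq R_0\sqrt{t}$ can be scaled to one in $S$ while preserving the inequality $|\la^{-k}x|\geq R_0\sqrt{\la^{-2k}t}$, so smoothness on $S$ transfers to smoothness on $\{|x|\geq R_0\sqrt{t}\}$. Inside $S$, I would exploit that $u_0\in E^2$ forces $\|u_0\|_{L^2(B_2(x_0))}\to 0$ as $|x_0|\to\I$, converting the locally small data to a locally small solution, and then trigger a Caffarelli--Kohn--Nirenberg-type $\e$-regularity criterion against the suitability condition \eqref{ineq:CKN-LEI}.

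Concretely, I fix $(x_0,t_0)\in S$ with $|x_0|\geq R$ (to be chosen large depending on $u_0$) and $t_0\in[1,\la^2]$ and work at unit parabolic scale. Testing the local energy inequality \eqref{ineq:CKN-LEI} against a cutoff supported in $B_2(x_0)\times(-1,2\la^2)$ and substituting the local pressure expansion \eqref{eq:pressure.dec} based at $x_0$ with $R=1$, the pressure splits into a local Calder\'on--Zygmund piece $\De^{-1}\div\div[(u\otimes u)\chi_{4}(\cdot-x_0)]$ supported in $B_4(x_0)$ and a nonlocal remainder whose kernel satisfies $|K(x-y)-K(x_0-y)|\lesssim|y-x_0|^{-4}$ for $x\in B_2(x_0)$ and $|y-x_0|\geq 4$. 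The local piece can be absorbed by Calder\'on--Zygmund bounds and a Gr\"onwall step into $\|u\|_{L^\infty_t L^2(B_2(x_0))}$ itself, while the nonlocal remainder is controlled by a weighted $L^2_\uloc$ tail of $u$ that is already globally bounded in time by the a priori estimate \eqref{ineq.apriorilocal}. The resulting local energy bound has the schematic form
\EQN{
\esssup_{0<t<2\la^2}\int_{B_1(x_0)}|u|^2\,dx+\int_0^{2\la^2}\!\!\int_{B_1(x_0)}|\nb u|^2\,dx\,dt\ \lec\ \|u_0\|_{L^2(B_2(x_0))}^2+\e(|x_0|),
}
with $\e(|x_0|)\to 0$ as $|x_0|\to\I$. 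Choosing $R$ large enough that the right-hand side falls below the CKN $\e$-regularity threshold yields $u\in L^\I$ on a parabolic neighborhood of $(x_0,t_0)$, and hence smoothness by standard bootstrapping. Setting $R_0$ proportional to $R$ makes this work uniformly in $t_0\in[1,\la^2]$.

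The main obstacle is the uniform-in-time control of the nonlocal pressure tail: although the kernel $|y-x_0|^{-4}$ is integrable in space, one must prevent the quantity $\int_0^{2\la^2}\int_{|y-x_0|\geq 4}|u(y,t)|^2|y-x_0|^{-4}\,dy\,dt$ from accumulating, and one must extract a decay rate in $|x_0|$ from the $E^2$ hypothesis alone. This is exactly where the refinement of \cite{KMT2} over earlier local energy arguments takes place. For the final assertion -- that $u$ is globally smooth provided $1\leq \la\leq\la_0(u_0)$ -- I would cite the small-$\la$ regularity results of \cite{KMT,KMT2} directly: a $\la$-DSS datum with $\la$ close to $1$ is a small discrete perturbation of a self-similar one, whose corresponding self-similar solution is globally smooth by Gruji\'c \cite{Grujic}, and the perturbative argument in \cite{KMT2} shows the nearby DSS solution inherits this global regularity.
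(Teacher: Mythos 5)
First, a point of comparison: the paper does not prove \cref{theorem:regularity} at all --- it is explicitly presented as ``a re-statement of results in \cite{KMT2}'' (with the $\la$-close-to-$1$ statement going back to \cite{KMT}), so there is no internal proof to match your argument against. Your overall architecture --- use $u_0\in E^2$ to get smallness of the local data norm near spatial infinity, convert that into local smallness of the solution, invoke a CKN/Lin $\e$-regularity criterion, and then propagate between the slab $\{1\le t\le\la^2\}$ and the full paraboloid by the DSS group --- is indeed the strategy of the cited literature, and the DSS propagation step and the citation for the $\la\le\la_0$ statement are fine as written.

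The genuine gap is in the middle step, where you claim that a single application of the local energy inequality \eqref{ineq:CKN-LEI} with the pressure expansion \eqref{eq:pressure.dec} plus ``a Gr\"onwall step'' yields
$\esssup_{0<t<2\la^2}\int_{B_1(x_0)}|u|^2+\iint_{B_1(x_0)}|\nb u|^2\lec\|u_0\|_{L^2(B_2(x_0))}^2+\e(|x_0|)$ with $\e(|x_0|)\to0$. As written this does not close. The error terms $\iint|u|^2(\pd_t\phi+\De\phi)$ and $\iint(|u|^2+2p)\,u\cdot\nb\phi$ are supported on the annulus $B_2(x_0)\setminus B_1(x_0)$ over the \emph{entire} time interval $(0,2\la^2)$, and the a priori bound \eqref{ineq.apriorilocal} only controls them by the \emph{uniform} local norm $\|u_0\|_{L^2_\uloc}$, which is $O(1)$, not $o(1)$ as $|x_0|\to\I$; similarly the nonlocal pressure tail contributes an $O(1)$ (bounded but not small) amount to $\int_{Q_1}|p|^{3/2}$, so the quantity fed into \cref{lemma:ereg} cannot be pushed below $\e_*$ by taking $|x_0|$ large alone. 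The mechanisms that actually make this work in \cite{JS,KMT,KMT2} are (a) working on a \emph{short} time interval $0<t\le T_0(u_0)$, where every error term carries a positive power of $T_0$ that provides the missing smallness and allows the local estimate to be closed by iteration over all far-field centers simultaneously, and then (b) using the DSS scaling to convert the short-time far-field region $\{|x|\ge R,\ 0<t\le T_0\}$ into the paraboloid $\{|x|\ge R_0\sqrt t\}$, from which the slab $1\le t\le\la^2$ is read off. Your proof runs the scaling step in the opposite direction (slab first, then paraboloid), which is logically equivalent for the \emph{regions}, but it forces you to establish $\e$-regularity directly at $O(1)$ times, which is exactly where the naive local energy estimate fails; this is the refinement of \cite{KMT2} that your sketch acknowledges but does not supply. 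A minor further point: your description of the $\la\le\la_0$ case as a perturbation off a globally smooth self-similar solution (via \cite{Grujic}) is not how \cite{KMT,KMT2} argue --- they iterate a local regularity criterion over the short period $[1,\la^2]$ using that $\la^2-1$ is small --- though since you are citing rather than proving that part, this only affects the exposition.
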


Local energy solutions were introduced with the partial regularity of Caffarelli, Kohn and Nirenberg \cite{CKN} in mind. In particular, this and the subsequent variations of their partial regularity can be used when analyzing local energy solutions. 
We will use the following $\varepsilon$-regularity criteria of Lin \cite{L98}.
\begin{lemma}[$\e$-Regularity \cite{L98}] \label{lemma:ereg} 
There exists a universal small constant $\varepsilon_*>0$ such that, if the pair $(u,p)$ is a suitable weak solution of (NS) in $Q_r = Q_r(x_0,t_0)=B_r(x_0)\times (t_0-r^2, t_0),\, B_r(x_0)\subset \R^3$, and
\EQ{\varepsilon^3 = \frac{1}{r^2} \int_{Q_r} (|u|^3 +|p|^{\frac{3}{2}}) dxdt <\varepsilon_*,}
then $u\in L^\I(Q_{\frac{r}{2}})$. Moreover, 
\EQ{\|\nb^k u \|_{L^\I(Q_{\frac{r}{2}})} \le C_k \varepsilon r^{-k-1},}
for universal constants $C_k$, where $k\in \N_0$.
\end{lemma}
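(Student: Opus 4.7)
The statement to prove is Lin's $\varepsilon$-regularity criterion \cite{L98}, which asserts that local smallness of the natural scale-invariant quantity $r^{-2}\int_{Q_r}(|u|^3+|p|^{3/2})$ upgrades a suitable weak solution from weak to classical regularity on the parabolic half-cylinder. The plan is a scaling--compactness argument. By the Navier--Stokes scaling $(u,p)(x,t) \mapsto (ru(rx,r^2t),r^2p(rx,r^2t))$, which preserves both the equations and the invariant quantity, it suffices to prove the assertion at $r=1$: if $\int_{Q_1}(|u|^3+|p|^{3/2}) < \varepsilon_*$, then $u$ and all of its spatial derivatives are bounded on $Q_{1/2}$ with bounds controlled by $\varepsilon$.

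I would argue by contradiction. Suppose there is a sequence $(u_k,p_k)$ of suitable weak solutions on $Q_1$ with $\int_{Q_1}(|u_k|^3+|p_k|^{3/2}) \to 0$ but each $u_k$ failing to be bounded on $Q_{1/2}$. Apply the local energy inequality \eqref{ineq:CKN-LEI} with a space-time cutoff supported in $Q_1$ to obtain a uniform bound on $\|u_k\|_{L^\infty_t L^2_x(Q_{3/4})} + \|\nabla u_k\|_{L^2(Q_{3/4})}$; the cubic and pressure terms on the right are controlled by the assumed smallness in $L^3 + L^{3/2}$. Using the equation to bound $\partial_t u_k$ in a negative Sobolev space and applying Aubin--Lions compactness gives a subsequence $u_k \to u_\infty$ strongly in $L^3(Q_{3/4})$, and the smallness assumption forces $u_\infty \equiv 0$.

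To upgrade qualitative compactness into a quantitative gain, I would split the pressure via the local representation \eqref{eq:pressure.dec} so that $p_k = p_k^{\mathrm{CZ}} + p_k^{\mathrm{harm}}$, where $p_k^{\mathrm{CZ}}$ is a Calder\'on--Zygmund transform of $(u_k \otimes u_k)\chi_{4R}$ and $p_k^{\mathrm{harm}}$ is harmonic on $Q_{1/2}$ (plus a function of time absorbed by the local energy inequality). By Calder\'on--Zygmund theory, $\|p_k^{\mathrm{CZ}}\|_{L^{3/2}(Q_1)} \lesssim \|u_k\|_{L^3(Q_1)}^2 \to 0$, while $p_k^{\mathrm{harm}}$ enjoys automatic interior $L^\infty$ bounds by the mean-value property, controlled by $\|p_k\|_{L^{3/2}(Q_1)}$. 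Substituting into the mild/Duhamel formulation on shrinking cylinders $Q_{\rho_n}$ and iterating parabolic Sobolev embeddings ($L^\infty_t L^2_x \cap L^2_t H^1_x \hookrightarrow L^{10/3}_{t,x}$, then higher-order gains from the Stokes semigroup) carries $u_k$ from $L^3$-small to $L^q$-small on strictly interior cylinders for some $q>5$. Serrin's criterion then yields $\|u_k\|_{L^\infty(Q_{1/2})} \to 0$, contradicting the singular behavior of $u_k$. A further iteration of Schauder-type estimates for the Stokes system promotes this to the derivative bounds $\|\nabla^\ell u\|_{L^\infty(Q_{1/2})} \le C_\ell \varepsilon$.

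The main obstacle is handling the pressure at each scale: the hypothesis provides only $L^{3/2}$ control of $p$ on $Q_r$, but the bootstrap requires smallness of $p$ in stronger norms on strictly smaller cylinders, and $p$ is a nonlocal functional of $u$ with no finite speed of propagation. The decomposition \eqref{eq:pressure.dec} is the key device: it isolates the only genuinely nonlocal piece as a \emph{harmonic} remainder, whose interior smoothness is automatic and quantitative, while the singular piece is a local operator on a truncated tensor product and therefore inherits smallness directly from the smallness of $u$ in $L^3$. Without this splitting, the compactness in step two cannot be upgraded to a topology strong enough to close the bootstrap.
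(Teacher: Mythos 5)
The paper does not prove this lemma at all: it is imported verbatim from Lin \cite{L98} as a black box, so there is no internal proof to compare against. Measured against the actual argument in \cite{L98} (and the CKN tradition it streamlines), your sketch has the right general flavor --- scaling to $r=1$, local energy inequality, pressure splitting into a Calder\'on--Zygmund part plus a harmonic remainder --- but it contains a genuine gap at its center. Your compactness step is set up so that $\int_{Q_1}(|u_k|^3+|p_k|^{3/2})\to 0$ and the Aubin--Lions limit is $u_\infty\equiv 0$. That conclusion is already contained in the hypothesis ($u_k\to 0$ in $L^3(Q_1)$ by assumption), so the compactness extracts no new information and cannot produce the quantitative gain you need. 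In Lin's argument the blow-up sequence is \emph{normalized} (divided by the small parameter) so that the limit is a nontrivial solution of the linear Stokes system; the interior regularity of that limit yields a one-step decay estimate for a scale-invariant quantity on a fixed smaller cylinder, which is then iterated across dyadic scales and converted into H\"older continuity via Campanato embedding. Without that normalization, or without the alternative direct CKN-style induction on scales, your argument has no mechanism to contradict ``$u_k$ unbounded on $Q_{1/2}$,'' since strong $L^3$ convergence to zero says nothing about $L^\infty$ of the approximants.

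The second soft spot is the bootstrap ``from $L^3$-small to $L^q$-small for some $q>5$.'' The local energy inequality plus interpolation gives $L^\infty_tL^2_x\cap L^2_tH^1_x\hookrightarrow L^{10/3}_{t,x}$, and $10/3<5$; gaining integrability beyond that exponent on strictly interior cylinders is exactly the hard part of the proof, and ``substituting into the mild/Duhamel formulation'' is not available for a suitable weak solution defined only on a cylinder (there is no canonical mild representation there; one must work with cut-off heat-kernel representations or, as in \cite{L98} and \cite{CKN}, with the decay of scaled Morrey quantities followed by parabolic Riesz-potential estimates). Relatedly, your appeal to \eqref{eq:pressure.dec} is misplaced: that expansion is part of the \emph{definition} of a local energy solution on all of $\R^3$, whereas here $u$ lives only on $Q_r$ and the correct device is the interior decomposition $p=-\De^{-1}\pd_i\pd_j(\chi\, u_iu_j)+p_h$ with $p_h(\cdot,t)$ harmonic --- which is what you describe in words, but it should not be sourced from \eqref{eq:pressure.dec}. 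The final passage to the derivative bounds $\|\nb^k u\|_{L^\I(Q_{r/2})}\le C_k\e r^{-k-1}$ via higher regularity for the Stokes system (or the vorticity equation) is standard once boundedness is known, and that part of your sketch is fine.
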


\begin{remark}\label{rem1}If $(u,p)$ is a suitable weak solution to (NS), then $(u,\td{p})$, where $\td{p} = p(x,t)-c_{x_0,R}(t)$ and $c_{x_0,R}(t)\in L^{\frac{3}{2}}(0,T)$, is a suitable solution to (NS)as follows: the hypothesis on $u$ are unchanged, $\nb p = \nb \td{p}$ in $\mathcal D'$, and $(u,\td{p})$ still satisfies the local energy inequality since
	\EQ{\int_0^t \int_\Om \td{p} u\cdot \nb \phi dxdt&=\int_0^t \int_\Om p u\cdot \nb \phi dxdt-\underbrace{\int_0^t\int_\Om c_{x_0,R}(t)u\cdot \nb \phi dxdt}_{=0},
	}
due to the divergence free condition.
\end{remark}

Another important property of local energy solutions concerns local smoothing as developed by Jia and \v Sver\' ak in \cite{JS}.

\begin{theorem}[Local smoothing {\cite[Theorem 3.1]{JS}}]
	\label{theorem:JSlocalsmoothing} Let $u_0\in L^2_{\uloc}(\R^3)$ be divergence free.
	Suppose $u_0\in L^q(B_2(0))$ with $\|u_0\|_{L^q(B_2(0))}\le M<\I$ and $q>3$. Decompose $u_0 = u_0^1+u_0^2$ with $\div u_0^1 =0,\,u_0^1|_{B_{4/3}} = u_0,\, \operatorname{supp} u_0^1 \Subset B_2(0)$, and $\|u_0^1\|_{L^q(\R^3)}<C(M,q)$. Let $a$ be the locally in time defined mild solution to Navier-stokes equations with initial data $u_0^1$. Then there exists a positive $T=T(\alpha,q,M)>0$, such that any local energy solution $u\in \mathcal{N}(u_0)$ satisfies 
		\EQ{\|u-a\|_{C^\gamma_{par}(\overline B_{\frac{1}{2}}\times [0,T])}\le C(M,q,\|u_0\|_{L^2_\uloc}),} for some $\gamma=\gamma(q)\in(0,1)$.
\end{theorem}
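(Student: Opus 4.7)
The plan is to write $u = a + v$, where $a$ is the mild solution to \eqref{eq:NS} with initial data $u_0^1$ and $v = u - a$ is the perturbation. Since $u_0^1 \in L^q(\R^3)$ with $q > 3$ and compact support, a standard Picard iteration in the Kato class $\mathcal K_q$ produces $a$ on some interval $[0, T_1(M, q)]$, with pointwise bounds on $a$ and its derivatives (and parabolic H\"older regularity away from $t = 0$) controlled by $M$.

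Writing out the equation, $v$ satisfies
\[
\pd_t v - \De v + \div(v\otimes v + a\otimes v + v\otimes a) + \nb q_v = 0, \quad \div v = 0,
\]
with initial data $v_0 = u_0 - u_0^1$ which vanishes on $B_{4/3}$ by construction. Because $u$ is CKN-suitable and $a$ is smooth, $v$ inherits a local energy inequality with forcing coming from the cross terms involving $a$. Applying this inequality against a spatial cutoff supported in $B_{4/3}$ and using \eqref{ineq.apriorilocal} for the $L^2_\uloc$ control of $u$, a Gronwall argument driven by the vanishing of $v_0$ on $B_{4/3}$ produces smallness of the local $L^2_x$ and $L^2_t H^1_x$ norms of $v$ on a slightly smaller ball, say $B_{5/4}$, for $t \in [0,T]$ with $T$ small.

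This smallness of $v$ combined with the boundedness of $a$ makes the CKN scale-invariant quantity $r^{-2}\iint_{Q_r}(|u|^3 + |p|^{3/2})\, dx\, dt$ small on parabolic cylinders inside $B_{3/4} \times [0, T]$, so Lemma \ref{lemma:ereg} yields uniform pointwise bounds on $u$ and its derivatives there; combining with the regularity of $a$ produces the desired $C^\gamma_{par}$ bound on $v = u - a$. The main obstacle is the pressure appearing in Lemma \ref{lemma:ereg}: one must use the nonlocal CKN pressure decomposition \eqref{eq:pressure.dec} to handle the far-field contribution to $p$ on $B_{3/4}$, and then invoke Remark \ref{rem1} to absorb the spatially constant pieces while preserving suitability and the local energy inequality. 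Pinning down the precise H\"older exponent $\gamma(q) \in (0,1)$ is most cleanly done by a compactness-contradiction argument: if the bound failed, parabolic rescaling around a worst point would produce a sequence of suitable solutions whose $L^3_\loc$-strong limit is a smooth solution of \eqref{eq:NS}, contradicting the scaling of the failed oscillation.
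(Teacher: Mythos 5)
This theorem is not proved in the paper at all; it is imported verbatim from Jia and \v{S}ver\'{a}k \cite[Theorem 3.1]{JS}, so your sketch has to be measured against their argument. Your overall strategy matches theirs: split $u=a+v$ with $a$ the subcritical mild solution generated by the compactly supported piece $u_0^1$, observe that $v$ solves a perturbed system with initial data vanishing on $B_{4/3}$, obtain smallness of the local energy norms of $v$ near the origin for short time, and finish with $\e$-regularity. That skeleton is correct.

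The genuine gap is in the last step. Lemma \ref{lemma:ereg} is an \emph{interior} criterion: it acts on cylinders $Q_r(x_0,t_0)=B_r(x_0)\times(t_0-r^2,t_0)$ and returns bounds $\|\nb^k u\|_{L^\I(Q_{r/2})}\le C_k\e r^{-k-1}$ that force $r\le \sqrt{t_0}$ and hence degenerate as $t_0\to 0$. Applied as you propose, it yields smoothness of $u$ (hence of $v$) only for $t>0$, with constants blowing up at the initial time; it cannot deliver the asserted bound in $C^\gamma_{par}(\overline B_{1/2}\times[0,T])$, whose entire content is H\"older continuity of $u-a$ \emph{up to and including} $t=0$. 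Note also that $u$ itself is generically discontinuous at $t=0$ (the data is merely $L^q$), so ``uniform pointwise bounds on $u$ combined with the regularity of $a$'' cannot be the mechanism: the regularity at $t=0$ must be proved directly for $v$. What Jia and \v{S}ver\'{a}k actually establish is an $\e$-regularity criterion \emph{at the initial time} for the perturbed system solved by $v$, on cylinders of the form $B_r(x_0)\times(0,r^2)$ anchored at $t=0$, crucially exploiting that $v(\cdot,0)=0$ on $B_{4/3}$. Your closing compactness-contradiction paragraph gestures in that direction, but as written it rescales around ``a worst point'' of $u$ rather than of $v$ and never uses the vanishing of the initial data, which is precisely the ingredient that kills the blow-up limit and forces the decay of parabolic oscillations of $v$ down to $t=0$. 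You would also need to address compactness of the pressure in that limit, which the nonlocal decomposition \eqref{eq:pressure.dec} makes delicate.
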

Note that local smoothing directly implies far-field regularity in the classes we study in this paper.

\subsection{Pointwise estimates for convolution operators}

We recall pointwise bounds for some convolutions from \cite[Lemma 2.1]{Tsai-DSSI}: Let $0<a<5,\, 0<b<5$ and $a+b>3$. Then, 
\EQ{\phi(x,a,b) = \int_0^1 \int_{\R^3} (|x-y|+\sqrt{1-t})^{-a}(|y|+\sqrt{t})^{-b} \,dy\,dt,} 
is well defined for $x\in \R^3$, and
\EQ{\label{ineq:Tsai.integral}\phi(x,a,b) \lesssim R^{-a} + R^{-b} + R^{3-a-b} [ 1+ (1_{a=3}+1_{b=3})\log R],}
where $R=|x|+2$. 
 These estimates can be extended to  {other time intervals} by a change of variable. We also need a variation of this estimate, which is the content of the next lemma.   

\begin{lemma}\label{lemma.intbound1}For $a\in [0,5)$ and $b\in[0,2)$, where $a+b<5$, 
	\EQ{\int_0^t\int \frac {1} {(|x-y| +\sqrt{t-s})^4} \frac 1 {(|y|+\sqrt s)^a} \frac 1 {\sqrt s^b} \,dy\,ds \leq \frac C {\sqrt t^{1-a}(|x|+\sqrt t)^a}+\frac C {\sqrt t^{1-4}(|x|+\sqrt t)^4}.
	}
\end{lemma}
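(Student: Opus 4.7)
The plan is to rescale to the unit-time case, then split the inner spatial integral into three regions following the same three-region philosophy used in \cite[Lemma 2.1]{Tsai-DSSI}, and finally integrate the resulting weight in $s$.

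First I would introduce the change of variables $y=\sqrt{t}\,z$, $s=t\tau$, and set $X=x/\sqrt{t}$. A direct bookkeeping of the Jacobians and the homogeneity of each kernel shows
\[
\text{LHS} \;=\; t^{\frac{1-a-b}{2}}\int_0^1\!\!\int_{\R^3} \frac{dz\,d\tau}{(|X-z|+\sqrt{1-\tau})^{4}\,(|z|+\sqrt{\tau})^{a}\,\tau^{b/2}},
\]
so it suffices to bound the fixed-time model integral $J(X)$ by $C(|X|+1)^{-a}+C(|X|+1)^{-4}$, and then undo the rescaling to recover the stated right-hand side. This reduction confines the work to estimating a dimensionless integral depending only on $X\in\R^3$.

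Next I would estimate $J(X)$ by writing $\R^3=A_1\cup A_2\cup A_3$ with $A_1=\{|z|\leq |X|/2\}$, $A_2=\{|z-X|\leq |X|/2\}$, and $A_3$ the complement. On $A_1$ one has $|X-z|\asymp |X|$, so the quartic factor pulls out and the remaining spatial integral is $\int_{A_1}(|z|+\sqrt{\tau})^{-a}dz$, which is bounded by a combination of $|X|^{3-a}$, $\sqrt{\tau}^{\,3-a}$, or a logarithm at $a=3$. On $A_2$ one has $|z|\asymp |X|$, so the $a$-factor pulls out and the remaining integral $\int_{A_2}(|X-z|+\sqrt{1-\tau})^{-4}dz$ is controlled by retaining the $\sqrt{1-\tau}$ scale so as to avoid a logarithmic blow-up as $\tau\to 1$. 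On $A_3$ both $|z|$ and $|X-z|$ dominate, so the integrand is bounded pointwise by $(|z|+\sqrt{\tau})^{-a}(|z|+\sqrt{1-\tau})^{-4}$, which is integrable in the far field under $a+b<5$. Assembling the three pieces produces, for each fixed $\tau$, a bound of the form $C(|X|+1)^{-a}g_1(\tau)+C(|X|+1)^{-4}g_2(\tau)$ with $g_1,g_2$ integrable weights in $\tau$ near $0$ and $1$.

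The $\tau$-integral is then closed using the hypotheses: $b<2$ ensures $\tau^{-b/2}$ is integrable near $0$, the condition $a<5$ controls the endpoint $\tau\to 1$ coming from $A_2$, and $a+b<5$ is precisely what makes the $A_3$ contribution finite. Putting these together gives $J(X)\lesssim (|X|+1)^{-a}+(|X|+1)^{-4}$, and substituting $X=x/\sqrt{t}$ together with the prefactor $t^{(1-a-b)/2}$ produces the two-term bound stated in the lemma. The main obstacle I anticipate is the borderline quartic singularity on $A_2$: because $(|X-z|+\sqrt{1-\tau})^{-4}$ is at the critical scale for integration over a three-dimensional ball of radius $\sim|X|$, one must carry the $\sqrt{1-\tau}$ regularization throughout and resist the temptation to drop it, otherwise a spurious logarithm is picked up that would spoil the clean power-law bound.
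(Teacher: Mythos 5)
Your proposal follows essentially the same route as the paper's proof: reduce to the unit-time case by parabolic rescaling, split the spatial integral into a region near the origin, a region near $x$, and the remainder, pull out the dominant kernel in each region, and then integrate in the time variable. The paper performs the rescaling at the end rather than the beginning and uses the annuli $\{|y|<R/2\}$, $\{R/2<|y|<2R\}$, $\{|y|>2R\}$ with $R=|x|+2$, but these differences are cosmetic.

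Two small points. First, you attribute the hypothesis $a+b<5$ to the far-field region $A_3$; in fact the far field only needs $a>-1$, and $a+b<5$ is what closes the time integral coming from the near-origin region $A_1$, where $\int_{A_1}(|z|+\sqrt{\tau})^{-a}\,dz$ produces the term $\sqrt{\tau}^{\,3-a}$ and one must integrate $\tau^{(3-a-b)/2}$ over $(0,1)$. Second, your scaling prefactor $t^{(1-a-b)/2}$ is the correct one, but note that $t^{(1-a-b)/2}(|X|+1)^{-a}=t^{(1-b)/2}(|x|+\sqrt{t})^{-a}$, which matches the stated right-hand side $\sqrt{t}^{\,a-1}(|x|+\sqrt{t})^{-a}$ only when $a+b=2$ (and similarly for the quartic term). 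So the final step ``produces the two-term bound stated in the lemma'' is a non sequitur for general $a+b<5$. This is an issue inherited from the lemma itself --- the paper's own proof writes the prefactor as $1/\sqrt{t}$, which likewise presumes $a+b=2$ --- and it is harmless for the paper since every application of the lemma has $a+b=2$; but having computed the exponent correctly, you should either restrict to that case or restate the right-hand side with the exponent $t^{(1-b)/2}$.
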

\begin{proof}
We begin with the case $t=1$.
Let $R=|x|+2$. It is easy to see that 
	\EQ{\sup_{R\leq 8}\int_0^1\int \frac {1} {(|x-y| +\sqrt{1-s})^4} \frac 1 {(|y|+\sqrt s)^a} \frac 1 {\sqrt s^b} \,dy\,ds <\I,}
provided $a+b<5$. Hence we just need to verify the correct decay when $R>8$. First consider the far-field, $|y|>2R$, where
	\EQ{\label{ineg:far.field.int.est}\int_0^1 \int_{|y|>2R}\frac {1} {(|x-y| +\sqrt{1-s})^4} \frac 1 {(|y|+\sqrt s)^a} \frac 1 {\sqrt s^b} \,dy\,ds &\lesssim \int_0^1 \frac{1}{\sqrt{s}^b} \,ds \int_{|y|>2R} \frac{1}{|y|^{4+a}}\,dy \\&\lesssim \frac{1}{R^{1+a}}.
	}
Next consider the region where $|y|<R/2$. Because $R>8$, for $a\ne 3$ ,
	\EQ{\label{ineq:near.field.int.est}\int_0^1 \int_{|y|<\frac{R}{2}}&\frac { 1} {(|x-y| +\sqrt{1-s})^4} \frac 1 {(|y|+\sqrt s)^a} \frac 1 {\sqrt s^b} \,dy\,ds\\ &\lesssim \int_0^1 \int_{|y|<\frac{R}{2}}\frac { 1} {R^4} \frac 1 {(|y|+\sqrt s)^a} \frac 1 {\sqrt s^b} \,dy\,ds 
		\\&\lesssim \frac { 1} {R^4} \int_0^1 \frac 1 {\sqrt s^b }\int_{|y|<\frac{R}{2}} \frac 1 {(|y|+\sqrt{s})^a} \,dy\,ds.}
Passing to spherical coordinates yields
	\EQ{
		\frac { 1} {R^4} \int_0^1 \frac 1 {\sqrt s^b }\int_{|y|<\frac{R}{2}} \frac 1 {(|y|+\sqrt{s})^a} \,dy\,ds
		&\lesssim \frac { 1} {R^4} \int_0^1 \frac 1 {\sqrt s ^b} \int_0^{\frac{R}{2}} (r+\sqrt{s})^{-a}r^2\, dr\, ds
		\\&\lesssim \frac { 1} {R^4} \int_0^1 \frac 1 {\sqrt s ^b } \left(\bigg(\frac{R}{2}+\sqrt{s}\bigg)^{3-a}-\sqrt{s}^{3-a}\right) \,ds\\& \lesssim \left(\frac{1}{R^{1+a}}+\frac{1}{R^4}\right)\int_0^1 \frac 1 {\sqrt s ^{a+b-3 }}\,ds
		\\& \lesssim \frac{1}{R^{1+a}}+\frac{1}{R^4},
	} 
because $a+b<5$. If $a=3$, then, similarly, passing to spherical coordinates yields
	\EQ{\label{ineq:near.field.int.est.a=3}\int_0^1 \int_{|y|<\frac{R}{2}}\frac { 1} {(|x-y| +\sqrt{1-s})^4} \frac 1 {(|y|+\sqrt s)^3} \frac 1 {\sqrt s^b} \,dy\,ds
		&\lesssim \frac{\ln(R+1)}{R^4}+\frac 1 {R^4}\lesssim \frac 1 {R^a} +\frac 1 {R^4},}
as long as $b<2$ since $a=3<4$.
	
The final region, $R/2<|y|<2R$, is treated as follows: 
	\EQ{\label{ineq:mid.field.int.est}\int_0^1 \int_{R/2<|y|<2R}&\frac { 1} {(|x-y| +\sqrt{1-s})^4} \frac 1 {(|y|+\sqrt{s})^a} \frac 1 {\sqrt s^b} \,dy\,ds\\ &\lesssim \int_0^1 \int_{R/2<|y|<2R}\frac { 1} {(|x-y| +\sqrt{1-s})^4} \frac 1 {R^a} \frac 1 {\sqrt s^b} \,dy\,ds\\ &\lesssim \frac{1}{R^a}\int_0^1 \frac{1}{\sqrt{s}^b} \int_{|z|<3R} \frac{1}{(|z| +\sqrt{1-s})^4}\,dz \,ds \\
		& \lesssim \frac 1 {R^a}\int_0^1 \frac{1}{\sqrt{s}^b} \left( -\frac{1}{3R+\sqrt{1-s}} + \frac{1}{\sqrt{1-s}}\right) \, ds \\
		&\lesssim \frac 1 {R^a} + \frac 1 {R^{a+1}},
	}
where we passed to spherical coordinates to evaluate the spatial integral.
	
Therefore, for all $x$,
	\EQ{\int_0^1\int \frac {1} {(|x-y| +\sqrt{1-s})^4} \frac 1 {(|y|+\sqrt s)^a} \frac 1 {\sqrt s^b} \,dy\,ds \le \frac{C}{(|x|+1)^a}+\frac{C}{(|x|+1)^4}, }
To conclude the proof we make the change of variables $x=\sqrt t \td x$, $y=\sqrt t \td y$ and $s= t \td s$.\ This substitution and the above bound lead to 
	\EQ{\label{ineq:int.bound.CoV}
	\int_0^t\int \frac {1} {(|x-y| +\sqrt{t-s})^4} \frac 1 {(|y|+\sqrt s)^a} \frac 1 {\sqrt s ^b} \,dy\,ds 
	& \leq \frac 1 {\sqrt t} \left( \frac C {(|\td x| +2)^a} + \frac C {(|\td x|+2 )^4}\right)\\
	&\leq     \frac C {\sqrt t^{1-a}(|x|+\sqrt t)^a}+ \frac {Ct^3} {(|x|+\sqrt t)^4}.
	} 
\end{proof}

\section{Analysis of the heat equation}\label{sec:heat}
In this section, we state and prove a variety of results on the decay of DSS solutions to the heat equation. These will be foundational for our work on Navier-Stokes. Let $A_k=\{ x\in \R^3: \la^k\leq |x|<\la^{k+1} \}$ and $A_k^* = \{x: \la^{k-1}\leq |x|<\la^{k+2}\}$.
\begin{lemma}\label{lemma:heat.equation.pointwise.decay}Assume $f\in L^q_\loc(\R^3\setminus \{0\})$, where $3<q\leq \I$, and satisfies, for some $\la >1$, 
	\[
	\la^{\sigma k} f(\la^k x) = f(x),
	\] 
where $\sigma\in \big(\frac 3 q, \infty\big)$ (note that $\sigma=1$ corresponds to being DSS). 
Then,
	\[
	\sup_{t\in [1,\la^2]} \| e^{t\Delta} f \|_{L^\I(B_R^c)} \lesssim_\la \| f\|_{L^q(A_1)} R^{ \frac 3 q -\sigma }.
	\]
\end{lemma}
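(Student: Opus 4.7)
The plan is to bound $e^{t\De}f(x) = \int_{\R^3} G_t(x-y)f(y)\,dy$ by decomposing the $y$-integration into the dyadic annuli $A_k$ matched to the $\la$-scaling of $f$, applying H\"older's inequality on each annulus, and summing the resulting geometric series in $k$.

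\textbf{Step 1 (scaling identity for $\|f\|_{L^q(A_k)}$).} From $\la^{\si k} f(\la^k x) = f(x)$, the change of variables $y = \la^{k-1}z$ maps $A_k$ onto $A_1$ with Jacobian $\la^{3(k-1)}$, and gives
\EQN{
\|f\|_{L^q(A_k)} = \la^{(\frac{3}{q}-\si)(k-1)}\|f\|_{L^q(A_1)}.
}
Since $\si > 3/q$, the exponent $\frac{3}{q}-\si$ is negative: these norms shrink as $k\to\I$ and grow as $k\to-\I$.

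\textbf{Step 2 (split at the dyadic scale of $|x|$).} For fixed $x$ with $|x|>R\ge 1$, pick $K_x\in\Z$ with $\la^{K_x}\le|x|<\la^{K_x+1}$, so that $K_x\ge\log_\la R$. Write
\EQN{
\int_{\R^3} G_t(x-y)|f(y)|\,dy = \underbrace{\sum_{k\ge K_x-1}\int_{A_k}\cdots}_{I_{\mathrm{near}}} \;+\; \underbrace{\sum_{k\le K_x-2}\int_{A_k}\cdots}_{I_{\mathrm{far}}}.
}
For $I_{\mathrm{near}}$, H\"older combined with translation invariance yields
\EQN{
\int_{A_k} G_t(x-y)|f(y)|\,dy \;\le\; \|G_t\|_{L^{q'}(\R^3)}\|f\|_{L^q(A_k)} \;\lesssim\; t^{-\frac{3}{2q}}\,\la^{(\frac{3}{q}-\si)(k-1)}\|f\|_{L^q(A_1)},
}
using the standard $\|G_t\|_{L^{q'}}\lesssim t^{-3/(2q)}$, which is $\lesssim_\la 1$ for $t\in[1,\la^2]$. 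Summing the resulting geometric series over $k\ge K_x-1$ (convergent because $\frac{3}{q}-\si<0$) produces a bound dominated by its smallest-$k$ term, namely $I_{\mathrm{near}}\lesssim_\la|x|^{\frac{3}{q}-\si}\|f\|_{L^q(A_1)}\le R^{\frac{3}{q}-\si}\|f\|_{L^q(A_1)}$.

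\textbf{Step 3 (Gaussian decay on the far piece).} For $y\in A_k$ with $k\le K_x-2$, $|y|\le\la^{K_x-1}\le|x|/\la$, so $|x-y|\ge(1-\la^{-1})|x|\gtrsim_\la R$ and $G_t(x-y)\lesssim_\la e^{-cR^2}$ uniformly for $t\in[1,\la^2]$. H\"older on each annulus together with Step~1 gives summands of size $\la^{(3-\si)(k-1)}$; multiplied by $e^{-cR^2}$ and summed over $k\le K_x-2$, the total is superexponentially small in $R$ and is easily absorbed into the bound from Step~2.

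The main obstacle is Step~2: the collapse of the geometric series to the scale $|x|^{\frac{3}{q}-\si}$ depends critically on the sign of $\frac{3}{q}-\si$, which is exactly the hypothesis $\si>3/q$. A secondary subtlety is that for $\si\gg 3$ the series in $I_{\mathrm{far}}$ only converges thanks to the Gaussian factor---a nonissue in the main application $\si=1$, but one needs to carry the Gaussian estimate termwise rather than factoring it out.
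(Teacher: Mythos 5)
Your proposal is correct and follows essentially the same route as the paper: decompose $y$-space into the annuli $A_k$, use the scaling identity $\|f\|_{L^q(A_k)}\sim\la^{k(\frac{3}{q}-\sigma)}\|f\|_{L^q(A_1)}$, apply Young/H\"older on the annuli at the scale of $|x|$ (which produces the decisive power $|x|^{\frac{3}{q}-\sigma}$), and kill the contribution from $|y|\ll|x|$ with the Gaussian. The only cosmetic difference is that the paper also invokes Gaussian decay for the region $|y|\gg|x|$, whereas you absorb all of $k\gtrsim K_x$ into one convergent geometric series; and the $\sigma\ge 3$ subtlety you flag for the inner piece is present in the paper's argument as well (its $I_1$ sum likewise requires $\sigma<3$, which covers every application of the lemma in the paper).
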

Note that this conclusion is discussed without proof by Tsai and the first author in \cite{BT1}. When $q=3$, there is no algebraic decay rate available as demonstrated in \cite{BT1}.
\begin{proof}
It suffices to prove that, for $t\in [1,\la^2]$, 
	\[
	\| e^{t\Delta}f \|_{L^\I (A_k)}\lesssim \la^{k(\frac 3 q -\sigma)},
	\]
where the suppressed constants are independent of $k$. For any $x$ and $k$, we will consider
	\EQ{
		e^{t\Delta}f = &\int_{|y|<\la^{k-1}} \frac c {t^{\frac 3 2}} e^{-\frac{|x-y|^2}{4t}} f(y)\,dy+ \int_{y\in A_k^*}\frac c {t^{\frac 3 2}} e^{-\frac{|x-y|^2}{4t}} f(y)\,dy 
		\\&+ \int_{|y|\geq \la^{k+2}}\frac c {t^{\frac 3 2}} e^{-\frac{|x-y|^2}{4t}} f(y)\,dy
		\\=:&I_1+I_2+I_3.
	}
Beginning with a bound for $I_1$,
	\EQ{
		\| I_1 \|_{L^\I(A_k)}&\leq \frac c {t^{\frac 3 2}} e^{-\frac{\la^{2k}}{4t}} \| f\|_{L^1(|y|< \la^{k-1})} 
		\\&\leq \frac c {t^{\frac 3 2}} e^{-\frac{\la^{2k}}{4t}} \sum_{k'\in \Z: k'\leq k-2} \| f \|_{L^1(A_{k'})}
		\\&\lesssim_\la \frac 1 {t^{\frac 3 2}} e^{-\frac{\la^{2k}}{4t}} \sum_{k'\in \Z: k'\leq k-2} \la^{k'(3-\sigma)} \| f \|_{L^q(A_{1})}
		\\&\lesssim_\la \frac 1 {t^{\frac 3 2}} e^{-\frac{\la^{2k}}{4t}} \la^{(k-2)(3-\sigma)} (k-2)\| f \|_{L^q(A_{1})}.
	}
As $k\to\I$, the Gaussian dominates any algebraic growth. Hence
	\[
	\| I_1 \|_{L^\I(A_k)}\lesssim_\la \la^{k(\frac 3 q -\sigma)}\| f \|_{L^q(A_1)}.
	\]
For $I_2$, by Young's inequality and DSS scaling,
	\EQ{
		\| I_2 \|_{L^\I(A_k)} &\leq \| t^{-\frac 3 2}e^{-\frac{|\cdot|^{2}}{4t}}\|_{L^{q_*}} \| f \chi_{A_k}\|_{L^q}
		\lesssim_\la \la^{k(\frac 3 q -\sigma)} \| f \|_{L^q(A_{1})}.
	}
Note that this term determines the power of $R$ in the lemma's statement.
	
Finally, for $I_3$, we sum over the annuli $A_{k'}$ and find
	\EQ{
		\| I_3 \|_{L^\I(A_k)} &\leq \sum_{k'\geq k+2} \frac c {t^\frac 3 2} e^{-\frac{\la^{2k'}}{4t}}|A_{k'}|^{1-\frac 1 q} \|f\|_{L^q(A_{k'})} 
		\\&\lesssim_\la \sum_{k'\geq k+2} \frac 1 {t^\frac 3 2} e^{-\frac{\la^{2k'}}{4t}}|A_{k'}|^{1-\frac 1 q} \la^{k'(\frac 3 q -\sigma)} \|f\|_{L^q(A_{1})}.
	}
Again, the Gaussian dominates any algebraic growth, and we conclude 
	\[
	\| I_3 \|_{L^\I(A_k)}\lesssim_\la \la^{k(\frac 3 q -\sigma)}\| f \|_{L^q(A_1)}.
	\]
	This completes the proof.
\end{proof}
The next lemma states that the decay condition just established for the solution to the heat equation extends to higher Picard iterates.
\begin{lemma}\label{lemma:Picard.decay}Fix $3<q\leq \I$. 
Assume, for all $(x,t)\in \R^3\times (0,\I)$, that
	\[
	P_0(x,t)\lesssim \frac 1 { \sqrt t^{\frac 3 q}(|x|+\sqrt t)^{1-\frac 3 q}}. 
	\]
Then, for all $k\in \N$ and all $(x,t)\in \R^3\times (0,\I)$,
	\[
	P_k(x,t)\lesssim_{k,P_0} \frac {1} {\sqrt t^{\frac 3 q}(|x|+\sqrt t)^{1-\frac 3 q}}. 
	\]
\end{lemma}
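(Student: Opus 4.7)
The plan is to induct on $k$, with the base case $k=0$ given by hypothesis. For the inductive step, write $P_k = P_0 - B(P_{k-1}, P_{k-1})$, so it suffices to control $B(P_{k-1}, P_{k-1})$ by the target rate. By \cref{lem:oseen.structure}, $|\nabla S(x-y, t-s)| \lesssim (|x-y|+\sqrt{t-s})^{-4}$, and the inductive hypothesis gives
\[
|P_{k-1}|^2(y,s) \lesssim \frac{1}{s^{3/q}(|y|+\sqrt{s})^{2-6/q}},
\]
so that
\[
|B(P_{k-1},P_{k-1})|(x,t) \lesssim_k \int_0^t \!\!\int_{\R^3} \frac{1}{(|x-y|+\sqrt{t-s})^4}\,\frac{1}{(|y|+\sqrt{s})^{2-6/q}}\,\frac{1}{\sqrt{s}^{6/q}}\,dy\,ds.
\]

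Next, I would apply \cref{lemma.intbound1} with the parameter choice $a = 2 - 6/q$ and $b = 6/q$. Since $q\in (3,\infty]$, one checks $a\in (0,2]\subset [0,5)$, $b\in [0,2)$, and $a+b=2<5$, so all hypotheses of the lemma are met (with the boundary case $q=\infty$ giving $a=2$, $b=0$, still in range). The lemma then produces two terms,
\[
\frac{\sqrt{t}^{\,1-6/q}}{(|x|+\sqrt{t})^{2-6/q}} \quad \text{and} \quad \frac{\sqrt{t}^{\,3}}{(|x|+\sqrt{t})^{4}}.
\]

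To conclude, I would compare each term directly to the target $\sqrt{t}^{-3/q}(|x|+\sqrt{t})^{-1+3/q}$. The ratio for the first term simplifies to $\bigl(\sqrt{t}/(|x|+\sqrt{t})\bigr)^{1-3/q}$, which is bounded by $1$ because $1 - 3/q > 0$ and $\sqrt{t}\leq |x|+\sqrt{t}$. For the second term, the analogous ratio is $\bigl(\sqrt{t}/(|x|+\sqrt{t})\bigr)^{3+3/q}$, likewise bounded by $1$. Adding $|P_0|$, which already satisfies the target bound by assumption, closes the induction and the constant picks up a factor depending on $k$, consistent with the statement.

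The main obstacle is just verifying the algebra of exponents and the range of applicability of \cref{lemma.intbound1}; once those are checked, the proof is essentially immediate. No finer cancellation or cutoff argument is required because both output terms from \cref{lemma.intbound1} have decay at least as strong as the target, with the scaling $-3/q - (1-3/q) = -1$ matching on all sides.
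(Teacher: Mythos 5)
Your proof is correct and follows essentially the same route as the paper: induction on $k$, reducing to a pointwise bound on $B(P_{k-1},P_{k-1})$ via the Oseen kernel estimate and a space-time convolution lemma. The only cosmetic difference is that you invoke \cref{lemma.intbound1} (which handles the $\sqrt{s}^{-6/q}$ weight explicitly, and whose hypotheses you correctly verify) where the paper cites \eqref{ineq:Tsai.integral}, and you work from the identity $P_k=P_0-B(P_{k-1},P_{k-1})$ rather than the telescoped difference $P_k-P_{k-1}$; both choices are sound.
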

\begin{proof}
This follows inductively from the fact that 
	\[
	P_k -P_{k-1}= B(P_{k-1},P_{k-1}),
	\]
which, by \eqref{ineq:Tsai.integral}, decays algebraically faster than $P_{k-1}$. This implies $P_k$ and $P_{k-1}$ have the same decay.
\end{proof}

We may now prove the following corollary to \cref{lemma:heat.equation.pointwise.decay}. These decay rates are central to our proof of $C^{1,\al}$ decay.

\begin{corollary}\label{cor:DecayFractionLaplacian}Let $0<\al< 1$ and $m\in N_0^3$ be a multi-index with $|m|\in \{0,1\}$.	If  ${u_0} \in C^{|m|,\al}_\loc(\R^3\setminus\{0\})$ is DSS, then, for all $\be \in (0,\al)$,
	\EQ{\label{eq:lau0LI}D^m\La^{\be} u_0 \in L^\I_\loc(\R^3\setminus \{0\}),}
	and, furthermore,
	\[
	\sup_{t\in [1,\la^2]}	|D^m\La^\be e^{t\Delta} u_0 (x,t)| \lesssim_{\la,u_0,\al,\be} \frac 1 {(|x|+1)^{1+m+\be}}.
	\]
\end{corollary}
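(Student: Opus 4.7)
The plan is to first establish the pointwise size of the fractional Laplacian of a locally Hölder continuous, DSS function directly from its singular integral representation, and then transfer that decay to the heat propagation via \cref{lemma:heat.equation.pointwise.decay}.

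\textbf{Reduction and setup.} Since $\La^\be$ and $D^m$ are Fourier multipliers, they commute on tempered distributions; this is legitimate because $u_0 \in L^{3,\I}$, which follows from DSS scaling combined with boundedness of $u_0$ on the annulus $A_0$. Therefore $D^m\La^\be e^{t\De}u_0 = e^{t\De}\La^\be(D^m u_0)$. Writing $v_0 := D^m u_0$, the hypothesis yields $v_0 \in C^\al_\loc(\R^3\setminus\{0\})$, and a direct DSS computation shows $v_0(\la x)=\la^{-\tau}v_0(x)$ with $\tau:=1+|m|\in\{1,2\}$; in particular, by the same scaling applied iteratively, $|v_0(y)| \lec |y|^{-\tau}$ globally away from the origin, and the Hölder seminorm of $v_0$ on any ball $B(x,r/4)$ with $|x|=r$ scales like $r^{-\tau-\al}$.

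\textbf{Pointwise bound on $\La^\be v_0$.} For $0<\be<1$ I use the classical singular integral representation
\EQ{\La^\be v_0(x) = c_\be\,\mathrm{PV}\int_{\R^3}\frac{v_0(x)-v_0(y)}{|x-y|^{3+\be}}\ind y.}
Fix $x\neq 0$, set $r:=|x|$, and split $\R^3\setminus\{x\}$ into four regions: (i) $\{|x-y|<r/4\}$, (ii) $\{|y|<r/2\}$, (iii) $\{|y|>2r\}$, and (iv) the middle region $\{r/2\le |y|\le 2r\}\cap\{|x-y|\ge r/4\}$. In (i) the Hölder seminorm bound gives integrand $\lec r^{-\tau-\al}|x-y|^{\al-3-\be}$, which integrates to $\lec r^{-\tau-\be}$ thanks to $\al>\be$. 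In (ii) the denominator is $\gtrsim r$, and $\int_{|y|<r/2}(|v_0(x)|+|v_0(y)|)\ind y\lec r^{3-\tau}$ (using $\tau<3$), yielding $\lec r^{-\tau-\be}$. In (iii) one uses $|x-y|\gtrsim|y|$ and $|v_0(y)|\lec|y|^{-\tau}$ to again obtain $\lec r^{-\tau-\be}$. Region (iv) is handled using $|v_0|\lec r^{-\tau}$ on its domain and integrating $|x-y|^{-3-\be}$ over $r/4\le|x-y|\le 3r$. Combining,
\EQ{|\La^\be v_0(x)| \lec |x|^{-\tau-\be},}
which in particular gives \eqref{eq:lau0LI}.

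\textbf{Heat propagation.} The homogeneity of $\La^\be$ together with the DSS of $v_0$ implies that $\La^\be v_0$ is DSS with scaling exponent $\sigma:=\tau+\be>0=3/\I$. Applying \cref{lemma:heat.equation.pointwise.decay} with $q=\I$, $f=\La^\be v_0$, and using the pointwise bound above to control the $L^\I(A_1)$ norm, we obtain
\EQ{\sup_{t\in[1,\la^2]}\|e^{t\De}\La^\be v_0\|_{L^\I(B_R^c)}\lec R^{-\tau-\be}=R^{-1-|m|-\be},}
for every $R\geq 1$, which handles $|x|\geq 1$. For $|x|\leq 1$ (where $|x|+1\sim 1$), I bound $e^{t\De}\La^\be v_0(x)$ by splitting the heat convolution at $|y|=2$: on $\{|y|<2\}$ the pointwise bound $|\La^\be v_0(y)|\lec|y|^{-\tau-\be}$ is integrable (since $\tau+\be<3$) against the uniformly bounded heat kernel on $t\in[1,\la^2]$; on $\{|y|\ge 2\}$ the integrand is uniformly bounded and the kernel has unit mass. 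This gives a uniform bound on $|x|\leq 1$, completing the proof.

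\textbf{Main obstacle.} The essential technical work is the four-region decomposition for $\La^\be v_0$, which must simultaneously balance three distinct phenomena: local $C^\al$ continuity controlling the principal value near $y=x$, the $|y|^{-\tau}$ blowup of $v_0$ near the origin, and the slow algebraic decay of $v_0$ at infinity. The strict inequality $\be<\al$ is what makes region (i) integrable, while the DSS-driven decay estimates are what rescue regions (ii)--(iv). Once this pointwise bound is in hand, the heat propagation step is essentially a direct application of the earlier lemma.
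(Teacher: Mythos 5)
Your proposal is correct and follows essentially the same route as the paper: both bound $\La^\be D^m u_0$ pointwise via the singular integral representation, using the local H\"older seminorm (and the strict inequality $\be<\al$) near the singularity and the DSS-driven decay $|D^m u_0|\lesssim |x|^{-1-|m|}$ elsewhere, then transfer the decay through the heat semigroup by applying \cref{lemma:heat.equation.pointwise.decay} with $q=\I$ and $\sigma=1+|m|+\be$. The only differences are cosmetic (you work directly at scale $|x|=r$ with a four-region split, while the paper proves the bound on $A_1$ and rescales by the DSS identity), plus your explicit treatment of $|x|\le 1$, which the paper leaves implicit.
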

\begin{proof}By scaling, it will suffice to show 
	\EQ{\label{eq:lau0LI}D^m\La^{\be} u_0 \in L^\I(A_1).}
Let $x\in A_1$. We have
	\EQ{  \La^{\be} D^m u_0(x)\lesssim&\bigg( \int_{|x-y|\le\la^{-1}} +\int_{|x-y|>\la^{-1}} \bigg)\frac {1}{|x-y|^{3+\beta}}(D^mu_0 (x)-D^mu_0 (y)) \, dy\\
		=:& J_1(x,t)+J_2(x,t)
		.}
	For the near-field in $J_1$, 
	\EQ{|J_1|&= \int_{|x-y|\le\la^{-1}}\frac 1 {|x-y|^{3+\beta}}(D^mu_0 (x)-D^mu_0 (y))\,dy\\
		&\lesssim\|u_0\|_{C^{|m|,\al}(A_1^*)}\int_{|x-y|\le\la^{-1}} \frac 1 {|x-y|^{3+\beta-\al}} \, dy\\
	&\lesssim_{\la,u_0,\al,\be} 1,
	}
	
	Next, for $J_2$, by the decay for $u_0$, $|D^m u_0|(x)\lesssim_{u_0,\la} |x|^{-|m|-1}$,
	\EQ{|J_2|&\leq \int_{|x-y|>\la^{-1}} \frac {|D^mu_0 (y)| +|D^mu_0 (x)|}{|x-y|^{3+\be}} \, dy\\
		&\lesssim_{u_0,\la} \int_{|x-y|>\la^{-1}} \frac {1}{|x-y|^{3+\be}} \frac {1}{|y|^{|m|+1}} \, dy
+ \int_{|x-y|>\la^{-1}} \frac {1}{|x-y|^{3+\be}} \, dy
\\
		&\lesssim_{u_0,\la} \left(\int_{|x-y|>\la^{-1}, |y|>\la^{-1}} +\int_{|y|\le\la^{-1}}\right)\frac {1}{|x-y|^{3+\be}} \frac {1}{|y|^{|m|+1}} \, dy+1\\
		&\lesssim_{u_0,\la} \int_{|x-y|>\la^{-1}, |y|>\la^{-1}} \frac {1}{|x-y|^{3+\be}} \frac {1}{|x|^{|m|+1}} \, dy+\int_{|y|\le\la^{-1}} \frac {1}{|x|^{3+\be}}\frac {1}{|y|^{|m|+1}} \, dy+1\\
		&\lesssim_{u_0,\la}1  
		,} 
	so long as $\be>0$ and $|m|\in\{0,1\}$. Therefore $D^{m} \La^{\be} u_0(x) \in L^\I_{\loc}(A_1)$. By  DSS scaling we   extend this to $\R^3\setminus\{0\}$ with the calculation 
	\EQ{
		\La^\be D^mu_0 (   x ) &= \la^{3k+\be k} \int \frac { \la^{(1+|m|)k}D^mu_0(\la^k x  ) - \la^{(1+|m|)k}D^mu_0(\la^k y)} {|\la^kx-\la^ky |^{3+\be}} \,dy
		\\& = \la^{(\be +1+|m|)k}    \int \frac { D^m u_0(\la^k x  ) -  D^mu_0(y)} {|\la^k x-y |^{3+\al}} \,dy  
		\\&=  \la^{(\be +1+|m|)k}  (\La^\be D^m u_0)( \la^k x).
	}
	Since $\nb^m\La^\be e^{t\Delta} u_0  =  e^{t\Delta} \nb^m \La^\be u_0 $, we use Lemma \ref{lemma:heat.equation.pointwise.decay} with $\sigma =1+m+\be$ and $q=\I$ to conclude.
\end{proof}

\begin{remark}\label{remark:12/22/21}
	Based on the same idea as in the above corollary we can prove that if ${u_0}$ is DSS and $\La^{\be}{u_0}\in L^\I_{\loc}(\R^3\setminus \{0\})$, for some $0<\be<2$, then, 
	\EQ{\label{ineq:LaaP0}
		\sup_{t\in [1,\la^2]}	|\La^\be e^{t\Delta} {u_0} (x,t)| \lesssim \frac 1 {(|x|+1)^{1+\be}}.
	}
	Note that Lemma \ref{lemma:decayCaloricFractional} leaves out the case $\al =\be$. This is anticipated because the analogous result for $\al = \be$ would imply that for $f\in C^\al_\loc(\R^3\setminus \{0\})\cap DSS$, 
	\[
	\La^\al e^{t\Delta}f (x) \lesssim \frac 1 {(\sqrt t+|x|)^{1+\al}},
	\]
	which suggests $\La^\al f \lesssim |x|^{-1-\al}$ and, in particular, that $\La^\al f \in L^\I_\loc (\R^3\setminus \{0\})\cap DSS$---this is exactly the initial data that leads to \eqref{ineq:LaaP0}. This should not be true for general elements of $C^\al_\loc(\R^3\setminus \{0\})\cap DSS$ due to the equivalence $C^\al = B^{\al}_{\I,\I}$ which, in our setting, corresponds to $\La^\al f\in (B^0_{\I,\I})_\loc (\R^3\setminus \{0\})\cap DSS$ which is \textit{strictly weaker} than $\La^\al f \in L^\I_\loc (\R^3\setminus \{0\})\cap DSS$. Therefore, we expect that $\al<\be$ is necessary in Lemma \ref{lemma:decayCaloricFractional}.
\end{remark}

The next lemma shows that derivatives enjoy an improved decay rate when the initial data is H\"older continuous. 

\begin{lemma}\label{lemma:heat.equation.pointwise.decay.alpha}Assume $f\in C^\al_\loc(\R^3\setminus \{0\})$, where $0<\al< 1$, and satisfies, for some $\la >1$, that
		\[
		\la^{\sigma k} f(\la^k x) = f(x),
		\] 
where $\sigma<3$ (note that $\sigma=1$ corresponds to being DSS). Then,
	\[
	\sup_{t\in [1,\la^2]} \| \nb e^{t\Delta} f \|_{L^\I(B_R^c)} \lesssim_\la  \|f\|_{C^\al(A_0)} R^{-(\sigma+\al)} .
	\]
\end{lemma}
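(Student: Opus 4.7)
The plan is to exploit the vanishing mean of $\nabla G$ (where $G$ denotes the heat kernel) together with H\"older regularity, in the same spirit as the proof of \cref{lemma:heat.equation.pointwise.decay}. Since $\int_{\R^3} \nabla_x G(x-y,t)\,dy = 0$, we may write
\[
\nabla e^{t\Delta} f(x) = \int_{\R^3} \nabla_x G(x-y,t)\bigl[f(y)-f(x)\bigr]\,dy.
\]
This subtraction is the essential maneuver: it is what lets us convert H\"older regularity of $f$ into extra decay for the gradient. Fix $x\in A_k$ with $\la^k \geq R$ and $t\in[1,\la^2]$, and decompose the integral into $I_1+I_2+I_3$ over $\{|y|<\la^{k-1}\}$, $A_k^*$, and $\{|y|\geq \la^{k+2}\}$, exactly as in \cref{lemma:heat.equation.pointwise.decay}. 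I aim to show $\|\nabla e^{t\Delta}f\|_{L^\I(A_k)}\lesssim_\la \la^{-(\si+\al)k}\|f\|_{C^\al(A_0)}$, from which the claim follows since $|x|\geq R$ forces $k\gtrsim \log_\la R$.

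The middle piece $I_2$ dictates the decay rate. A direct change of variables in the scaling identity $\la^{\si k}f(\la^k \cdot)=f(\cdot)$ shows that
\[
[f]_{C^\al(A_k^*)} = \la^{-(\si+\al)k}[f]_{C^\al(A_0^*)},
\]
and then applying $|f(y)-f(x)|\leq [f]_{C^\al(A_k^*)}|y-x|^\al$ on $A_k^*$ gives
\[
|I_2| \leq [f]_{C^\al(A_k^*)} \int_{\R^3} |\nabla G(x-y,t)|\,|x-y|^\al\,dy \lesssim_\la t^{\frac{\al-1}{2}}\la^{-(\si+\al)k}\|f\|_{C^\al(A_0)}.
\]
The $y$-integral is evaluated by the change of variables $y-x=\sqrt t\, w$ using $|\nabla G(z,t)|\lesssim t^{-2}|z|e^{-|z|^2/(4t)}$, and the $t$-factor is harmless on $[1,\la^2]$.

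For $I_1$ and $I_3$, the key point is that $|x-y|\gtrsim \la^k$ on both regions, so $e^{-|x-y|^2/(4t)}\lesssim e^{-c\la^{2k-2}}$ for $t\in[1,\la^2]$. The homogeneity gives $\|f\|_{L^\I(A_{k'})}\lesssim \la^{-\si k'}\|f\|_{C^\al(A_0)}$ on each annulus (using that $f\in C^\al(A_0)$ is bounded). Summing these $L^1$ contributions of $f$ against the Gaussian-suppressed $|\nabla G|$ (as in the proof of \cref{lemma:heat.equation.pointwise.decay}, where the condition $\si<3$ is used to dominate the summand at $k'=k-2$ for $I_1$ and the trivial decay of the summand at large $k'$ for $I_3$), the polynomial growth in $\la^k$ is crushed by the exponential factor. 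Hence $|I_1|,|I_3|$ decay in $k$ faster than any power of $\la^{-k}$, and in particular faster than the target $\la^{-(\si+\al)k}$.

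The main obstacle is bookkeeping, not conceptual: one must carefully track (i) the scaling of the H\"older seminorm across the annuli $A_k^*$, (ii) the fact that the integral $\int |\nabla G(z,t)||z|^\al\,dz$ is uniformly bounded in $t\in[1,\la^2]$ up to constants depending only on $\la,\al$, and (iii) the use of $\si<3$ to dominate the near-origin sum in $I_1$. Once these are in hand, the three-piece decomposition yields the assertion exactly as in \cref{lemma:heat.equation.pointwise.decay}, with the H\"older regularity and the $f(y)-f(x)$ trick producing the extra factor $\la^{-\al k}$ over what Lemma~\ref{lemma:heat.equation.pointwise.decay} with $q=\I$ would give for $e^{t\Delta}f$ itself.
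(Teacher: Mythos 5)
Your proof is correct and follows essentially the same strategy as the paper's: exploit a vanishing first moment of $\nabla G$ to subtract $f(x)$, apply the rescaled H\"older seminorm on the annular region containing $x$ to extract the factor $\la^{-(\si+\al)k}$, and use Gaussian suppression (with $\si<3$ guaranteeing local integrability of $f$ near the origin) to dispose of the remaining regions. The only cosmetic differences are that the paper subtracts $f(x)$ only on the ball $\{|x-y|<\la^{k-1}\}$, using that $(x_i-y_i)e^{-|x-y|^2/4t}$ has mean zero on spheres centered at $x$, and indexes its three regions by $|x-y|$ rather than $|y|$, so the dominant contribution appears in its $I_1$ rather than your $I_2$.
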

\begin{proof}
It suffices to prove that, for $t\in [1,\la^2]$, 
	\[
	\sup_{t\in[1,\la^2]}\| \nb e^{t\Delta}f \|_{L^\I (A_k)}\lesssim \la^{-(\sigma+\al)k }  \|f\|_{C^\al(A_0)},
	\]
where the suppressed constants are independent of $k$. Let $x\in A_k$. Note that 
	\EQ{\pd_i e^{t\Delta}f(x) = &\int_{\R^3} \frac c {t^{\frac 5 2}} (x_i-y_i) e^{-\frac{|x-y|^2}{4t}} f(y)\,dy.}
Since $(x_i-y_i) e^{-\frac{|x-y|^2}{4t}}$ is mean zero on spheres centered at $x$, 
	\EQ{
		\pd_i e^{t\Delta}f(x) = &\int_{|x-y|<\la^{k-1}} \frac c {t^{\frac 5 2}} (x_i-y_i) e^{-\frac{|x-y|^2}{4t}} (f(y)-f(x))\,dy
		\\&+ \int_{x-y\in A_k^*}\frac c {t^{\frac 5 2}} (x_i-y_i) e^{-\frac{|x-y|^2}{4t}} f(y)\,dy
		\\&+ \int_{|x-y|\geq \la^{k+2}}\frac c {t^{\frac 5 2}} (x_i-y_i) e^{-\frac{|x-y|^2}{4t}} f(y)\,dy
		\\=:&\,I_1(x,t)+I_2(x,t)+I_3(x,t).
	}
For $I_1$, because $x,y\in B_{\la^{k-1}}$, it follows that
	\EQN{|f(y)-f(x)|=\frac{\la^{-\sigma k}|f(\la^{-k} x)-f(\la^{-k} y)|}{\la^{k\al}|\la^{-k} x-\la^{-k} y|^\al}|x-y|^\al\lesssim [f]_{C^\al(A_0)} \frac{1}{\la^{k(\sigma+\al)}}|x-y|^\al,}  
and therefore
	\EQN{|I_1|(x,t) \lesssim \frac{c}{t^{\frac 5 2}}\frac {[f]_{C^\al(A_0)}}{\la^{k(\sigma+\al)}}\int_{|x-y|<\la^{k-1}} |x-y|^{1+\al}e^{-\frac{|x-y|^2}{4t}} \,dy.}
Because the integral above is bounded independently of $k$ and we are taking $t\in [1,\la^2]$, we have
	\EQN{\sup_{t\in[1,\la^2]} \|I_1\|_{L^\I}(t) &\lesssim_{\al,\la} \frac {[f]_{C^\al(A_0)} }{\la^{k(\sigma+\al)}},}
which determines the power of $R$ in the lemma's statement.
		
For $I_2$, because $x\in A_k$ and $x-y\in A_k^*$ we know $|y|<\la^{k+2}+\la^{k+1}$, and so
	\EQ{\| I_2 \|_{L_x^\I(A_k)}(t)&\leq \frac c {t^{\frac 5 2}} \la^{k+2}e^{-\frac{\la^{2k-2}}{4t}} \| f\|_{ L^1\left(|y|<\la^{k+1}(\la+1)\right)}.
	}
Then, because $|f(y)|\lesssim_\la \frac{\|f\|_{L^\I(A_0)}}{|y|^\sigma}$, 
	\EQ{\label{eqn:I2L1bound}\|f\|_{L^1\left(|y|<\la^{k+1}(\la+1)\right)}\lesssim_\la   \|f\|_{L^\I(A_0)} \int_{|y|<\la^{k+1}(\la+1)} \frac{1}{|y|^\sigma} \,dy \lesssim_\la \la^{(3-\sigma)k} \|f\|_{L^\I(A_0),}
	} 
given $\sigma<3$. Therefore,
	\EQ{\sup_{t\in[1,\la^2]}\| I_2 \|_{L_x^\I(A_k)}(t)&\lesssim_\la \la^{(4-\sigma)k} \|f\|_{L^\I(A_0)} \frac c {t^{\frac 5 2}}e^{-\frac{\la^{2k-2}}{4t}} .}
As $k\to\I$, the Gaussian dominates any algebraic growth. Hence,
	\[
	\| I_2 \|_{L^\I(A_k)}\lesssim_\la \la^{-(\sigma+\al)k} \|f\|_{L^\I(A_0)}.
	\]
Finally, for $I_3$, we sum over the annuli $A_{k'}$ and find 
	\EQ{
		\| I_3 \|_{L^\I(A_k)}(t) &\leq \sum_{k'\geq k+2} \frac c {t^\frac 5 2} \la^{k'} e^{-\frac{\la^{2k'}}{4t}} \|f\|_{L^1(A_{k'}^*)} 
		\\&\leq \sum_{k'\geq k+2} \frac c {t^\frac 5 2} e^{-\frac{\la^{2k'}}{4t}} \la^{(4-\sigma)k' }\|f\|_{L^\I(A_0)},
		} 
where we used DSS scaling. Again, the Gaussian dominates any algebraic growth so the preceding series is summable. We conclude 
	\[
	\sup_{t\in[1,\la^2]}\| I_3 \|_{L^\I(A_k)}\lesssim_\la \la^{-(\sigma+\al)k }\|f\|_{L^\I(A_0)}.
	\]
\end{proof}

\cref{lemma:heat.equation.pointwise.decay.alpha} suggests a similar result should hold for fractional derivatives of order between $\al$ and $1$. In order to prove such a result, we will need the following elementary lemma. 

\begin{lemma}[Decay of fractional derivatives of Schwarz functions] Let $\Ga$ be Schwarz and $0<\be<1$. Then 
	\[
	| \La^\be \Ga |(x) \lesssim_\Ga \frac 1 {(1+|x|)^{3+\be}}.
	\]
\end{lemma}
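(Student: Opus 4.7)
The plan is to represent the fractional Laplacian by its singular integral formula,
\[
\La^\be \Ga(x) = c_\be \,\mathrm{p.v.}\int_{\R^3} \frac{\Ga(x)-\Ga(y)}{|x-y|^{3+\be}}\,dy,
\]
which is available for $0<\be<1$ and Schwartz $\Ga$, and then estimate the right-hand side piecewise in $y$. The case $|x|\le 2$ is routine: splitting at $|x-y|=1$, using $|\Ga(x)-\Ga(y)|\lesssim\|\nb\Ga\|_{L^\I}|x-y|$ on the inside (where $|x-y|^{-2-\be}$ is integrable since $\be<1$) and the pointwise bound $|\Ga(x)|+|\Ga(y)|\lesssim \|\Ga\|_{L^\I}$ combined with the tail integrability of $|x-y|^{-3-\be}$ on the outside yields a constant bound, which is equivalent to $(1+|x|)^{-3-\be}$ on this range.

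For $|x|\ge 2$ I would partition $\R^3$ into three sets, $D_1 = \{|x-y|<|x|/2\}$, $D_2 = \{|y|<|x|/2\}$, and $D_3 = \R^3\setminus(D_1\cup D_2)$. On $D_1$, any point $\xi$ on the segment from $x$ to $y$ satisfies $|\xi|\ge |x|/2$, so Schwartz decay gives $|\nb\Ga(\xi)|\lesssim_N |x|^{-N}$ for any $N$; inserting $|\Ga(x)-\Ga(y)|\le|\nb\Ga(\xi)||x-y|$ and integrating $|x-y|^{-2-\be}$ over a ball of radius $|x|/2$ produces a bound $\lesssim_N |x|^{-N+1-\be}$, and choosing $N=4$ gives $\lesssim|x|^{-3-\be}$. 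On $D_2$ one has $|x-y|\ge|x|/2$, so bounding the numerator by $|\Ga(x)|+|\Ga(y)|$ and pulling the kernel out leads to $\lesssim|x|^{-3-\be}(|x|^3|\Ga(x)|+\|\Ga\|_{L^1})$, both factors of which are controlled by the Schwartz decay of $\Ga$ and integrability. On $D_3$ both $|y|$ and $|x-y|$ are bounded below by $|x|/2$, and an identical split using Schwartz decay of $\Ga(x)$ together with $\Ga\in L^1$ handles this region.

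The only delicate step is $D_1$, since a naive bound $|\Ga(x)-\Ga(y)|\lesssim 1$ would give only $|x|^{1-\be}$, which is divergent. The key observation, and the main obstacle, is to exploit that the hypothesis $|x-y|<|x|/2$ together with $|x|\ge 2$ forces both endpoints, and hence the entire connecting segment, to lie outside $B_{|x|/2}$; Schwartz decay of $\nb\Ga$ at infinity then overwhelms the mild divergence contributed by the fractional kernel and yields the required $(1+|x|)^{-3-\be}$ rate.
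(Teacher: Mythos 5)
Your proof is correct and follows essentially the same strategy as the paper's: both split the singular integral representation of $\La^\be\Ga$ into a near-diagonal region (handled via the mean value theorem and rapid decay of $\nb\Ga$ away from the origin), a region where $|y|$ is small and $|x-y|\gtrsim|x|$ (handled via $\|\Ga\|_{L^1}$, which produces the dominant $(1+|x|)^{-3-\be}$ term), and a remaining region where both $|y|$ and $|x-y|$ are large (handled via the pointwise decay of $\Ga$). The only cosmetic differences are that the paper indexes the regions by dyadic annuli $A_k$ rather than by $|x|/2$, and that no principal value is actually needed since the integral converges absolutely for $0<\be<1$.
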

\begin{proof} Fix $\la=2$ in the definition of $A_k$.
Fix $k$ so that $x\in A_k $, where $k\in \N$. Then,
	\EQ{
		| \La^\be \Ga |(x) \lesssim&\bigg( \int_{|x-y|<2^{k-2}} + \int_{|x-y|\geq 2^{k-2};|y|>2^k} +\int_{|x-y|\geq 2^{k-2};|y|\leq 2^k} \bigg) \frac {\Ga(x)-\Ga(y)} {|x-y|^{3+\be}}\,dy
		\\\lesssim& { 2^{(1-\be)k} \|\nb \Ga\|_{L^\I(A_k)} + 2^{-k\be}\|\Ga\|_{L^\I(\{|x|>2^{k} \})} } + \int_{|x-y|\geq 2^{k-2};|y|\leq 2^k} \frac {\Ga(x) } {|x-y|^{3+\be}}\,dy
		\\&+\int_{|x-y|\geq 2^{k-2};|y|\leq 2^k} \frac {\Ga(y) } {|x-y|^{3+\be}}\,dy
		\\\lesssim& \underbrace{2^{(1-\be)k} \|\nb \Ga\|_{L^\I(A_k)} + 2^{-k\be+1}\|\Ga\|_{L^\I(\{|x|>2^{k} \})} }_{\text{rapid decay since $\Ga$ is Schwarz}} + \frac 1{2^{k(3+\be)}} \|\Ga\|_{L^1}
		\lesssim \frac 1 {|x|^{3+\be}}.
	}
The fact that $\La^\be \Ga \in L^\I ( \R^3)$ completes the proof.
\end{proof}
We now prove an analogue of Lemma \ref{lemma:heat.equation.pointwise.decay.alpha} for fractional derivatives.
\begin{lemma}\label{lemma:decayCaloricFractional}
Assume $f\in C^\alpha_\loc(\R^3\setminus \{0\})$, where $0<\alpha< 1$, and is DSS. Fix $\be\in (\al,1)$.
Then,
	\[
	\sup_{t\in [1,\la^2]} \| \La^\be e^{t\Delta} f \|_{L^\I(B_R^c)} \lesssim_\la  \|f\|_{C^\al(A_0)} R^{-(1+\alpha)} .
	\]
\end{lemma}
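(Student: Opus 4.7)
The plan is to represent the action of $\La^\be e^{t\Delta}$ on $f$ via its convolution kernel and to proceed by an annular decomposition analogous to that in the proof of \cref{lemma:heat.equation.pointwise.decay.alpha}, but using a different cancellation mechanism since $\La^\be$ has no odd-symmetric sphere cancellation the way $\nabla$ does.

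First, I would set $K^\be(x,t) := (\La^\be G_t)(x)$, where $G_t$ is the heat kernel, so that $\La^\be e^{t\Delta} f = K^\be(\cdot,t)*f$. By parabolic scaling $K^\be(x,t) = t^{-(3+\be)/2}(\La^\be G_1)(x/\sqrt t)$, so the preceding lemma on fractional derivatives of Schwartz functions applied to $G_1$ yields
\[
|K^\be(x,t)| \lesssim \frac{1}{(\sqrt t + |x|)^{3+\be}}.
\]
Since $\widehat{K^\be}(\xi) = |\xi|^\be e^{-t|\xi|^2}$ vanishes at $\xi=0$ and $K^\be(\cdot,t)\in L^1(\R^3)$, we have $\int_{\R^3} K^\be(\cdot,t)\,dx = 0$, and hence
\[
\La^\be e^{t\Delta}f(x) = \int_{\R^3} K^\be(x-y,t)\bigl(f(y)-f(x)\bigr)\,dy
\]
at every $x$ where $f$ is bounded.

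As in \cref{lemma:heat.equation.pointwise.decay.alpha}, it then suffices to prove $\|\La^\be e^{t\Delta}f\|_{L^\I(A_k)} \lesssim_\la \la^{-k(1+\al)} \|f\|_{C^\al(A_0)}$ uniformly in $k\in\N$ and $t\in[1,\la^2]$, since $\|\cdot\|_{L^\I(B_R^c)}$ is the supremum over $k\gtrsim \log_\la R$. For $x\in A_k$ split the integral into the three regions $|x-y|<\la^{k-1}$, $x-y\in A_k^*$, $|x-y|\ge \la^{k+2}$, calling the pieces $I_1,I_2,I_3$. For $I_1$, DSS scaling gives $|f(y)-f(x)| \le \la^{-k(1+\al)}[f]_{C^\al(A_0^*)}|x-y|^\al$, so
\[
|I_1| \lesssim \la^{-k(1+\al)}\|f\|_{C^\al(A_0)} \int_{|z|<\la^{k-1}} \frac{|z|^\al}{(\sqrt t+|z|)^{3+\be}}\,dz,
\]
and the integral is bounded uniformly in $k$ precisely because $\be>\al$ renders the tail $\int r^{\al-1-\be}\,dr$ convergent. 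For $I_2$, $|K^\be| \lesssim \la^{-k(3+\be)}$ on the whole region; using $|f(y)|\lesssim |y|^{-1}$ (locally integrable in $3$D) and $|f(x)|\lesssim \la^{-k}$, one finds $\int_{x-y\in A_k^*}(|f(y)|+|f(x)|)\,dy \lesssim_\la \la^{2k}$, giving $|I_2|\lesssim \la^{-k(1+\be)}$. For $I_3$ partition the complement into annuli $A_{k'}$ with $k'\ge k+1$: on each, $|x-y|\sim \la^{k'}$ so $|K^\be|\lesssim \la^{-k'(3+\be)}$, and the integrated size of $|f(y)|+|f(x)|$ is $O_\la(\la^{2k'}+\la^{-k}\la^{3k'})$; summing the resulting geometric series produces $O(\la^{-k(1+\be)})$. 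Since $\be>\al$, all three pieces are bounded by $\la^{-k(1+\al)}\|f\|_{C^\al(A_0)}$ as required.

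The step I expect to require the most care is the near-field $I_1$: the uniform bound on $\int|z|^\al(\sqrt t+|z|)^{-3-\be}\,dz$ depends essentially on $\be>\al$, which is exactly where the hypothesis is used and why, as noted in \cref{remark:12/22/21}, the endpoint $\be=\al$ is expected to fail. A secondary point worth emphasizing is that, unlike $\nabla G_t$, the kernel $K^\be$ has no mean-zero property on balls or spheres; the subtraction of $f(x)$ must be justified through the global identity $\int_{\R^3}K^\be = 0$, and one must verify absolute integrability of $K^\be(x-y,t)(f(y)-f(x))$ globally (which follows from the decay of $K^\be$ together with the at-worst $|y|^{-1}$ blow-up of $f$ near the origin).
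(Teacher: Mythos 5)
Your proposal is correct and follows essentially the same route as the paper's proof: represent $\La^\be e^{t\Delta}f$ through the kernel $K_t=\La^\be G_t$, use $\int K_t\,dx=0$ (Fourier transform vanishing at the origin) to subtract $f(x)$, exploit the H\"older seminorm with DSS scaling in the near field (where $\be>\al$ is exactly what makes $\int |z|^\al(1+|z|)^{-3-\be}\,dz$ converge), and use $|f(y)|\lesssim |y|^{-1}$ together with the $(1+|z|)^{-3-\be}$ kernel decay in the far field. The only difference is cosmetic — you split into three regions mirroring \cref{lemma:heat.equation.pointwise.decay.alpha}, while the paper uses a near/far split at $|x-y|=\la^{k-2}$ and then subdivides the far field by the size of $|y|$ — and both yield the same bound $\la^{-k(1+\al)}$ on $A_k$.
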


\begin{proof}
Let $\Ga_t$ be the Gaussian kernel of the heat equation at time $t$. We consider only $t\in [1,\la^2] $. We note that $\int \La^\be \Ga_t \,dx = 0$, for all $0<\be<2$ and $t>0$, since the Fourier transform of $\La^\be \Ga_t$ is zero at the origin. Fix $x\in A_k$, for some $k\in \N$. 
Let $K_t = \La^\be G_t$. 
Then,
	\EQ{
		\La^\be e^{t\Delta}f(x) &= \int K_t(x-y) (f(y)-f(x))\,dy
		\\&=\bigg(\int_{|x-y|<\la^{k-2} } +\int_{|x-y|\geq \la^{k-2} } \bigg) K_t(x-y) (f(y)-f(x))\,dy
		\\&=I_1+I_2.
	}
We have 
	\EQ{
		I_1 &\leq \| f\|_{C^\al( A_k)}  \int_{\R^3} |x-y|^\al K_t(x-y) \,dy
		\\&\lesssim_\la \la^{-k(1+\al)}\| f\|_{C^\al( A_0)}.
	}
We split $I_2$ into two integrals and treat each separately, starting with 
	\EQ{\label{ineq:2/22/21/a}
		\int_{|x-y|\geq \la^{k-2} } K_t(x-y)f(x)\,dy &\lesssim_\la \frac 1 {|x|} \int_{|x-y|\geq \la^{k-2} }  K_t(x-y) \,dy
		\\&
		\lesssim_\la \frac 1 {|x|^{1+\al}},
	}
since $|x|>1$ using the fact that $K_t$ is Schwarz. For the last term,
	\EQN{
		\int_{|x-y|\geq \la^{k-2} } K_t(x-y)f(y)\,dy &= \bigg( \int_{|x-y|\geq \la^{k-2} ;|y|>\la^{k-2}} +\int_{|x-y|\geq \la^{k-2} ;|y|\leq \la^{k-2}} \bigg) K_t(x-y)f(y)\,dy.
	}
By the argument in \eqref{ineq:2/22/21/a},
	\[
	\int_{|x-y|\geq \la^{k-2} ;|y|>\la^{k-2}} K_t(x-y)f(y)\,dy \lesssim_\la \frac 1 {|x|^{1+\al}}.
	\]
On the other hand,
	\EQ{
		\int_{|x-y|\geq \la^{k-2} ;|y|\leq \la^{k-2}} K_t(x-y)f(y)\,dy
		&\lesssim_{\|f\|_{L^\I(A_0)}} \frac 1 {\la^{k(3+\be)}} \int_{|y|\leq \la^{k-2}} \frac 1 {|y|}\,dy
		\\&\lesssim \frac 1 {\la^{k(1+\be)}} \lesssim \frac 1 {|x|^{1+\al}}.
	}
This completes the proof.
\end{proof}

The last lemma of this section is dedicated to the decay of commutators where the fractional Laplacian, and subsequently gradients, are applied to the two-tensor $P_0 \otimes P_0$. This lemma is used to eliminate logarithms in our decay results when $u_0 \in C^{\al}_{\loc}$ and $u_0 \in C^{1,\al}_{\loc}$.

	\begin{lemma}[Commutator decay]\label{lemma:CommutatorDecay}Fix $\al\in (0,1)$ and a multi-index $m \in \N_0^3$, $|m|\le1$. If $u_0\in C^{|m|,\al}_\loc (\R^3\setminus \{0\})$ is DSS, then, for $t\in [1,\la^2]$ and $\be\in \big(0,(2-|m|)\al\big)$,
		\[
		D^m \La^{\be} \cdot ({P_0} \otimes {P_0}) (x,t) \lesssim_{u_0,\la,\al,\be} {P_0}_i D^m\La^{\be} {P_0}_j (x,t)+ {P_0}_j D^m \La^{\be} {P_0}_i (x,t) + \mathcal O\bigg(\frac {1} {|x|^{2+|m| +\be}}\bigg),
		\] 
		for $1\leq i,j\leq 3$.
	\end{lemma}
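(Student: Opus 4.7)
The proof is based on the singular-integral representation
\[
\La^\be f(x) = c_\be \int_{\R^3} \frac{f(x)-f(y)}{|x-y|^{3+\be}}\,dy,
\]
valid for $0<\be<2$ and sufficiently regular $f$, combined with the pointwise Leibniz-commutator identity
\[
\La^\be(fg)(x) = f(x)\La^\be g(x) + g(x)\La^\be f(x) - c_\be \int_{\R^3}\frac{(f(x)-f(y))(g(x)-g(y))}{|x-y|^{3+\be}}\,dy,
\]
obtained by writing $f(x)g(x)-f(y)g(y) = f(x)(g(x)-g(y)) + g(y)(f(x)-f(y))$ and then splitting $g(y)=g(x)-(g(x)-g(y))$ in the second piece. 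The whole task is then to show that the commutator integral above, applied to components of $P_0$ (and of $D^mP_0$ when $|m|=1$), decays like $|x|^{-2-|m|-\be}$.

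\textbf{Case $|m|=0$.} Apply the identity with $f=P_{0,i}$, $g=P_{0,j}$. It remains to estimate
\[
E(x,t) = \int_{\R^3}\frac{(P_{0,i}(x,t)-P_{0,i}(y,t))(P_{0,j}(x,t)-P_{0,j}(y,t))}{|x-y|^{3+\be}}\,dy.
\]
Split into near field $\{|x-y|\le|x|/2\}$ and far field. In the near field, use the DSS scaling of the local H\"older seminorm of $P_0$: for $x\in A_k$ the DSS identity $P_0(\la z,\la^2 s)=\la^{-1}P_0(z,s)$ combined with $u_0\in C^\al_\loc(\R^3\setminus\{0\})$ yields $[P_{0,l}(\cdot,t)]_{C^\al(\text{nbhd of }x)}\lesssim |x|^{-1-\al}$ uniformly for $t\in[1,\la^2]$. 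Inserting this H\"older bound gives
\[
\int_{|x-y|\le|x|/2}\frac{|x|^{-2-2\al}|x-y|^{2\al}}{|x-y|^{3+\be}}\,dy \lesssim |x|^{-2-2\al}\cdot|x|^{2\al-\be} = |x|^{-2-\be},
\]
where the radial integral converges precisely under $2\al>\be=(2-|m|)\al>\be$. The far field is broken into $|y|>2|x|$, $|y|<|x|/2$, and $|y|\sim|x|$ with $|x-y|\gtrsim|x|$; in each sub-region the pointwise decay $|P_{0,l}(y,t)|\lesssim(|y|+1)^{-1}$ and the geometry of the denominator give the same $|x|^{-2-\be}$ bound.

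\textbf{Case $|m|=1$.} Use that $D^m$ commutes with $\La^\be$ on the relevant class together with the classical Leibniz rule to write
\[
D^m\La^\be(P_{0,i}P_{0,j}) = \La^\be\bigl((D^mP_{0,i})P_{0,j}\bigr) + \La^\be\bigl(P_{0,i}(D^mP_{0,j})\bigr),
\]
and apply the commutator identity to each summand. The diagonal contributions $P_{0,j}\La^\be D^mP_{0,i}$ and $P_{0,i}\La^\be D^mP_{0,j}$ are the leading terms in the statement. The cross terms $(D^mP_{0,l})\La^\be P_{0,l'}$ are absorbed into the remainder: by Lemma~\ref{lemma:heat.equation.pointwise.decay} applied to $\nb u_0$ (DSS with scaling exponent $2$, and locally in $L^\I$ thanks to $C^{1,\al}_\loc$) one has $|D^mP_0|\lesssim(|x|+1)^{-2}$, while Corollary~\ref{cor:DecayFractionLaplacian} gives $|\La^\be P_0|\lesssim(|x|+1)^{-1-\be}$, so the product is $O(|x|^{-3-\be})$. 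The two commutator integrals are handled by the same near/far split, but now using the H\"older bound $|D^mP_{0,i}(x,t)-D^mP_{0,i}(y,t)|\lesssim|x|^{-2-\al}|x-y|^\al$ on the $C^\al$-piece and the Lipschitz bound $|P_{0,j}(x,t)-P_{0,j}(y,t)|\lesssim|x|^{-2}|x-y|$ on the smoother piece. The near field yields
\[
\int_{|x-y|\le|x|/2}\frac{|x|^{-4-\al}|x-y|^{1+\al}}{|x-y|^{3+\be}}\,dy \lesssim |x|^{-4-\al}\cdot|x|^{1+\al-\be} = |x|^{-3-\be},
\]
which converges for all $\be<\al=(2-|m|)\al$. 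The far field is handled as in the $|m|=0$ case.

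\textbf{Main obstacle.} The crucial technical point is establishing the correctly-scaled local H\"older/Lipschitz seminorms of $P_0$ and $D^mP_0$ on a neighborhood of size $|x|/2$ about $x\in A_k$, with scaling $|x|^{-1-\al}$, $|x|^{-2-\al}$ and $|x|^{-2}$ respectively, and doing so uniformly for $t\in[1,\la^2]$. This requires re-running the heat-kernel splitting arguments of Section~\ref{sec:heat} at the level of difference quotients. Once these seminorm estimates are in hand, the near-field computation is a radial bookkeeping whose integrability threshold produces precisely the stated restriction $\be<(2-|m|)\al$, and the far field is routine given the pointwise decay inherited from Lemma~\ref{lemma:heat.equation.pointwise.decay}.
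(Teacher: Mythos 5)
Your $|m|=0$ argument is essentially the paper's proof: the same pointwise commutator identity, the same split into $\{|x-y|\le |x|/2\}$, $\{|x-y|>|x|/2,\ |y|>|x|/2\}$ and $\{|y|<|x|/2\}$, and the same scaled seminorm bound $[P_0(\cdot,t)]_{C^\al}\lesssim |x|^{-1-\al}$, which the paper obtains via the cutoff decomposition $P_0=e^{t\De}(u_0(1-\phi))+e^{t\De}(u_0\phi)$ (this is exactly the claim \eqref{eq:P0Ca} there, so the supporting estimate you defer to your ``main obstacle'' paragraph is indeed available).

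For $|m|=1$ you take a genuinely different route, and this is where a concrete quantitative problem appears. The paper keeps the derivative inside a single second-order-difference formula with kernel $(x_k-y_k)/|x-y|^{5+\be}$, so that on $\{|y|<|x|/2\}$ the kernel contributes $|x|^{-4-\be}$ while the product of two $P_0$-differences is only $O\big((|y|+1)^{-2}\big)$; the $y$-integral is then $O(|x|)$ and the region contributes $O(|x|^{-3-\be})$ with no logarithm. In your decomposition $\La^\be\big((D^mP_{0,i})P_{0,j}\big)+\La^\be\big(P_{0,i}D^mP_{0,j}\big)$, the residual commutator integrals carry the weaker kernel $|x-y|^{-3-\be}$ but the more singular weight $|D^mP_0(y)|\,|P_0(y)|\lesssim(|y|+1)^{-3}$, and on $\{|y|<|x|/2\}$ this gives
\[
|x|^{-3-\be}\int_{|y|<|x|/2}(|y|+1)^{-3}\,dy\ \sim\ |x|^{-3-\be}\log(2+|x|),
\]
not the claimed $O(|x|^{-3-\be})$. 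So ``the far field is handled as in the $|m|=0$ case'' fails in that sub-region by a logarithm. The loss is repairable (integrate by parts in $y$ there to move the derivative back onto the kernel and recover the extra factor of $|x-y|^{-1}$, or simply run the estimate with a slightly smaller $\be$), and it is harmless for the application in Theorem \ref{theorem:tdu.pointwise.decay.C1Al}, but as written your argument proves a logarithmically weaker remainder than the lemma asserts. Your near-field and cross-term estimates for $|m|=1$ are fine, granting the local $C^{1,\al}$ bound on $P_0$ (slightly stronger than the $C^1$ bound the paper records, but obtainable by the same cutoff argument since the local piece of $P_0$ is smooth on $A_1$).
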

	
	\begin{proof} A straightforward computation shows, for $|m|=0$,
		\EQN{
		\La^\be ({P_0}_i {P_0}_j) (x,t)&= {P_0}_i \La^\be {P_0}_j (x,t)+ {P_0}_j \La^\be {P_0}_i (x,t)
\\&\qquad +   \int \frac {({P_0}_i (x)-{P_0}_i(y))({P_0}_j(x)-{P_0}_j(y))} {|x-y|^{3+\be}} \,dy,
}
		and, for $|m|=1$, 
		\EQN{
			\pd_k\La^\be ({P_0}_i {P_0}_j) (x,t)&= {P_0}_i \pd_k\La^\be {P_0}_j (x,t)+ {P_0}_j \pd_k\La^\be {P_0}_i (x,t)\\&\qquad +   \int \frac {(x_k-y_k)} {|x-y|^{5+\be}}({P_0}_i (x)-{P_0}_i(y))({P_0}_j(x)-{P_0}_j(y)) \,dy.
		}
	In both cases,	to determine the decay of the principal-value integral, we need to bound an integral of the form
		\[ \int \frac {1} {|x-y|^{3+|m|+\be}}({P_0}_i (x)-{P_0}_i(y))({P_0}_j(x)-{P_0}_j(y)) \,dy\]
		Suppose $2\leq |x|$. We break the above integral into three parts: $|x-y|<\frac {|x|} 2$, $|x-y|>\frac {|x|} 2$ and $|y|>\frac {|x|} 2$, and $|y|<\frac {|x|} 2$. First, we consider $|x-y|<\frac {|x|} 2$, i.e.,
		\EQ{
			\int_{|x-y|<\frac {|x|} 2} \frac 1 {|x-y|^{3+|m|+\be}}({P_0}_i (x)-{P_0}_i(y))({P_0}_j(x)-{P_0}_j(y)) \,dy.
		}
		We will use the claim
		\EQ{\label{eq:P0Ca}
			\| {P_0} \|_{C^{\max\{|m|,\al\}} (A_1)}(t) \lesssim_{u_0} 1,
		}
		for $t>0$.  To verify this claim, let $\phi$ be a smooth function with the following properties: $\supp \phi \subset B_{\la^{-2}}$, $\phi|_{B_{\la^{-3}}} \equiv 1$, and $\supp \nb\phi \subset \{x: \la^{-3}\le |x|\le \la^{-2}\}$. Write $P_0 = e^{t\De} (u_0 (1-\phi)) + e^{t\De} (u_0 \phi).$ Because $u_0 \in C^{|m|,\al}_{\loc}(\R^3\setminus \{0\})$ it follows $u_0 (1-\phi) \in C^{|m|,\al}(\R^3)$ and so $e^{t\De} (u_0 (1-\phi)) \in C^{|m|,\al}(\R^3)$ for $t>0$.
		
		Next for the local part, if $x\in A_1$, we can bound $e^{t\De} (u_0 \phi)$ in $C^{1}(A_1)$ for $t>0$ as follows
		\EQ{\pd_i e^{t\De}(u_0\phi)(x)&\lesssim \int_{\R^3} \frac{(x_i-y_i)}{t^{\frac 5 2}} e^{-\frac{|x-y|^2}{4t}} (u_0 \phi) (y) \, dy\\
			&\lesssim \frac{\la+\la^{-2}}{t^{\frac 5 2}} e^{-\frac{(\la+\la^{-2})^2}{4t}} \int_{\R^3}(u_0 \phi) (y) \, dy\\
			&\lesssim_{u_0} \frac{\la+\la^{-2}}{t^{\frac 5 2}} e^{-\frac{(\la+\la^{-2})^2}{4t}} \int_{B_{\la^{-2}}}\frac 1 {|y|} \, dy\\
			&\lesssim_{u_0} \frac{\la+\la^{-2}}{t^{\frac 5 2}} e^{-\frac{(\la+\la^{-2})^2}{4t}} \la^{-4}\\
			&\lesssim_{u_0,\la} 1.}
		Therefore,
		\[\|\nb e^{t\De}(u_0\phi)\|_{L^\I(A_1)}(t) <\I,\]
		for $t>0$.Thus, $e^{t\De}(u_0\phi)\in C^1(A_1) \subset C^\al (A_1)$ for $t>0$, which implies the conclusion\eqref{eq:P0Ca}.
		
		By DSS scaling, if $y$ is in $B_{\frac {|x|} 2}(x)$, then, uniformly in $t$, we have
		\[
		\frac {|{P_0}(x,t)-{P_0}(y,t)|}{|x-y|^{\al}} \lesssim_\la \frac 1 {|x|^{1+\al}} [P_0]_{C^{\al} (A_1)}.
		\]
		and, for $|m|=1$,
		\[
		\frac {|{P_0}(x,t)-{P_0}(y,t)|}{|x-y|} \lesssim_\la \frac 1 {|x|^{2}} [P_0]_{C^{1} (A_1)}.
		\]
		Hence,
		\EQ{
			& \int_{|x-y|<\frac {|x|} 2} \frac 1 {|x-y|^{3+|m|+\be}}({P_0}_i (x)-{P_0}_i(y) )({P_0}_j(x)-{P_0}_j(y)) \,dy\\ 
			&\lesssim_{\la,u_0}\int_{|x-y|<\frac {|x|} 2} \frac 1 {|x-y|^{3+|m|+\be-2\max\{|m|,\al\}}}\frac{{P_0}_i (x)-{P_0}_i(y) }{|x-y|^{\max\{|m|,\al\}}}\frac{{P_0}_j(x)-{P_0}_j(y)}{|x-y|^{\max\{|m|,\al\}}}\,dy\\
			&\lesssim_{\la,u_0} \frac 1 {|x|^{2+2\max\{|m|,\al\} }} \int_{|x-y|<\frac {|x|} 2} \frac 1 {|x-y|^{3+|m|+\be-2\max\{|m|,\al\}}}\,dy
			\\&\lesssim_{\la,u_0} \frac 1 {|x|^{2+|m|+\beta}},
		}
		provided $|m|+\be<2\max\{|m|,\al\}$. This is clearly satisfied for $\be <(2-|m|)\al$.
		
		Next, we consider the case when $|x-y|>\frac {|x|} 2$ and $|y|>\frac {|x|} 2$. Because $|{P_0}(y,t)|\lesssim_{\la,u_0} \frac 1 {|y|}$ and $|y|>|x|/2$, we find
		\EQ{
			&\int_{|x-y|>\frac {|x|} 2;|y|>\frac {|x|} 2} \frac 1 {|x-y|^{3+|m|+\beta}}({P_0}_i (x)-{P_0}_i(y) )({P_0}_j(x)-{P_0}_j(y))\,dy 
			\\&\lesssim_{\la,u_0} \frac 1 {|x|^{2}} \int_{|x-y|>\frac {|x|} 2;|y|>\frac {|x|} 2}\frac 1 {|x-y|^{3+|m|+\beta}} \,dy
			\\& \lesssim_{\la,u_0} \frac 1 {|x|^{2+|m|+\be}},
		} 
		for $\be+|m|>0$, clearly satisfied for $\be>0.$
		The final region is $|y|<\frac {|x|} 2$. By the same decay for ${P_0}(x,t)$, we find
		\EQ{&\int_{|y|<\frac {|x|} 2} \frac 1 {|x-y|^{3+|m|+\beta}}({P_0}_i (x)-{P_0}_i(y) )({P_0}_j(x)-{P_0}_j(y))\,dy\\
			&\lesssim_{\la,u_0} \int_{|y|<\frac {|x|} 2} \frac { 1} {|x-y|^{3+|m|+\beta}}\frac 1 {|y|^{2}}\,dy\\
			&\lesssim_{\la,u_0} \frac { 1} {|x|^{3+|m|+\be}}\int_{|y|<\frac {|x|} 2}\frac{1}{|y|^{2}}\,dy\\
			&\lesssim_{\la,u_0}\frac { 1} {|x|^{2+|m|+\be}},}
		and our proof is complete.
	\end{proof}

\section{Decay of DSS Navier-Stokes flows}
\subsection{Decay rates when $u_0\in L^{3<q\leq \I}_\loc(\R^3\setminus \{0\})$ }

Here we prove Theorem \ref{theorem:Lpdecay1} but first prove a supporting lemma. Our main tool is the local smoothing result Theorem \ref{theorem:JSlocalsmoothing} of Jia and \v Sver\' ak which we use alongside the following lemma on the decay of $u$ above a space-time parabola. Recall that $A_k^* = \{x: \la^{k-1}\leq |x|<\la^{k+2}\}$.

\begin{lemma}\label{lemma:DSSLqDecay}
Let $\ga>0$ be given.
Suppose $u$ is a $\la$-DSS, local energy solution to \eqref{eq:NS} with divergence free, DSS data in $E^2$. Assume $u$  satisfies 
	\[
	\max\{\sup_{0<s<T} \| u \|_{L^q(A_{0}^*)},\sup_{0<s<T} \| p \|_{L^{q/2}(A_{0}^*)} \}<\ga,
	\]
for some $T>0$.
Then, for any $l\in N_0$,
	\[
	|u(x,t)| \lesssim_{\la,l,u_0} \frac {\ga } {\sqrt t^{|l|+\frac 3 q} (|x|+\sqrt t)^{1-\frac 3 q}}\text{ for } |x| \geq R_0 \sqrt t.
	\]
\end{lemma}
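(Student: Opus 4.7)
The plan is to reduce the pointwise decay for $|x|\ge R_0\sqrt t$ to a uniform bound on a fixed annulus via the discrete self-similarity, and then apply the $\varepsilon$-regularity Lemma~\ref{lemma:ereg} of Lin on a parabolic cylinder whose size is dictated by the integrability hypothesis on $(u,p)$.

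First, fix $(x,t)$ with $|x|\ge R_0\sqrt t$ and pick $k\in\Z$ with $\la^k\le |x|<\la^{k+1}$. Setting $y=x/\la^k\in A_0$ and $\tau=t/\la^{2k}$, the DSS identity iterated $k$ times gives $\nb^l u(x,t)=\la^{-k(l+1)}\nb^l u(y,\tau)$ and also $\tau\le \la^2/R_0^2$. So it suffices to bound $|\nb^l u(y,\tau)|$ uniformly on $A_0\times (0,\la^2/R_0^2]$ in terms of $\gamma$, and then unscale. The decay rate predicted for the statement matches this rescaling: the factor $\la^{-k(l+1)}\sim |x|^{-(l+1)}$ from unscaling combines with $\tau^{-(l+3/q)/2}\sim \la^{k(l+3/q)}t^{-(l+3/q)/2}$ to yield $\ga\,|x|^{3/q-1}\sqrt t^{-l-3/q}$ in the regime $|x|\gg\sqrt t$, and the purely parabolic $\gamma\sqrt t^{-l-1}$ in the regime $|x|\sim\sqrt t$; both cases agree with $\ga\,\sqrt t^{-(l+3/q)}(|x|+\sqrt t)^{-(1-3/q)}$.

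Next, apply Lemma~\ref{lemma:ereg} at $(y_0,\tau_0)\in A_0\times(0,\la^2/R_0^2]$ with radius $r=c\,\min(\sqrt{\tau_0},1)$, where $c=c(\la)$ is chosen so that $B_r(y_0)\subset A_0^*$ for every $y_0\in A_0$. Before invoking Lin, one must arrange a pressure that is suitable on $Q_r$: following Remark~\ref{rem1}, subtract off the Lebesgue constants $c_{x_0,R}(t)$ of the local pressure expansion from Definition~\ref{def:localEnergy}, and note that the $L^{q/2}(A_0^*)$ bound $\gamma$ is on this normalized pressure. By H\"older's inequality and the hypothesized uniform $L^q$ and $L^{q/2}$ bounds,
\EQN{
\frac1{r^2}\int_{Q_r(y_0,\tau_0)}\bke{|u|^3+|p|^{3/2}}\,dx\,ds\;\lesssim\; r^{3-9/q}\,(\gamma^3+\gamma^{3/2}).
}
Since $q>3$, this tends to $0$ with $r$, so for $r$ sufficiently small (depending on $\gamma,q,\la$) the smallness hypothesis is satisfied with $\e\lesssim r^{1-3/q}\gamma$. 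Lin's bound then yields $|\nb^l u(y_0,\tau_0)|\lesssim_l \e\, r^{-l-1}\lesssim_l \gamma\,r^{-(l+3/q)}$. Inserting $r\sim\min(\sqrt{\tau_0},1)$ and unscaling back to $(x,t)$ through the DSS identity produces the stated bound for $|\nb^l u(x,t)|$.

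The main obstacle is the bookkeeping at the transition between the two regimes $\tau_0\lesssim 1$ and $\tau_0\gtrsim 1$, which correspond to $|x|\gtrsim\sqrt t$ and $|x|\sim\sqrt t$; in the first case we take $r\sim\sqrt{\tau_0}$ so that $Q_r\subset\R^3\times(\tau_0/2,\tau_0)$ lies in $(0,\infty)$, while in the second case $r$ may be taken to be a $\la$-dependent constant. One must also verify that $r$ can be chosen without referencing the solution $u$ itself (only through $\gamma$, $\la$, $q$), and that the cylinder $Q_r(y_0,\tau_0)$ lies in $A_0^*\times(0,\tau_0)$ uniformly in $y_0\in A_0$—this is where $c=c(\la)$ from the geometry of $A_0$ inside $A_0^*$ enters. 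The dependence $\lesssim_{u_0,\la,l}$ absorbs $R_0$ (itself determined by $u_0$ via Theorem~\ref{theorem:regularity}) and the $(\la,q)$-dependent exponents in the smallness threshold for $r$.
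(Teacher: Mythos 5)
Your strategy is, up to a change of coordinates, the paper's own: the paper keeps time in a unit window and applies Lin's $\varepsilon$-regularity on unit cylinders $B_1(x_0)\times(0,1)$ centered at far positions $x_0\in A_k$, where the rescaled $L^q(A_0^*)$ hypothesis produces the small factor $|x_0|^{3(\frac{3}{q}-1)}\ga^3$; you instead rescale space to $A_0$ and use cylinders of radius $r\sim\sqrt{\tau_0}$, where the smallness appears as $r^{3-\frac{9}{q}}$. Under the DSS map these are the same cylinders, and your H\"older computation, the resulting bound $\|\nb^l u\|_{L^\I(Q_{r/2})}\lesssim \ga\, r^{-l-\frac{3}{q}}$, and the unscaling to $\ga\,\sqrt{t}^{-l-\frac{3}{q}}(|x|+\sqrt t)^{\frac{3}{q}-1}$ are all correct.

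There is one concrete gap: you never enforce that the cylinders live inside the time window $(0,T)$ on which the hypothesis actually controls $\|u\|_{L^q(A_0^*)}$ and $\|p\|_{L^{q/2}(A_0^*)}$. After normalizing $x$ into $A_0$, the rescaled time $\tau_0=t/\la^{2k}$ ranges over all of $(0,\la^2/R_0^2]$, and in the application $T$ (coming from Theorem~\ref{theorem:JSlocalsmoothing}) is small, so for $\tau_0>T$ the cylinder $B_r(y_0)\times(\tau_0-r^2,\tau_0)$ lies entirely in times where the $\ga$-bound is unavailable; shrinking $r$ does not help, since all times in the cylinder still exceed $T$. This untreated set corresponds to the transition region $R_0\sqrt t\le |x|\lesssim \sqrt{t/T}$, which is compact modulo one period of the DSS scaling; the paper closes exactly this hole by invoking the far-field regularity of Theorem~\ref{theorem:regularity} (boundedness of $u$ and its derivatives on $\{|x|\ge R_0\sqrt t,\ 1\le t\le\la^2\}$) and enlarging the constant before propagating by self-similarity. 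You need the same step; note that it also removes the need to shrink $r$ depending on $\ga$ in your ``$\tau_0\gtrsim 1$'' regime, since that regime is subsumed by the transition region. A second, minor point: the $\e$-regularity threshold gives $\e\lesssim r^{1-\frac{3}{q}}(\ga+\ga^{1/2})$ rather than $r^{1-\frac{3}{q}}\ga$, which is harmless but worth recording if you want the stated linear dependence on $\ga$ for $\ga\ge 1$.
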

\begin{proof}Suppose $x_0\in A_k$. By scaling,
	\EQN{
		\int_0^1 \int_{B_1(x_0)} | u|^3\,dx\,dt &\lesssim_\la |x_0|^{3(\frac 3 q-1)} \sup_{0<s\lesssim_{\la}|x_0|^{-2}} \bigg( \int_{B_{\frac 1 {\la^k}}(\frac{x_0}{\la^k})}|u|^q \,dx \bigg)^{\frac 3 q} \\&\lesssim_\la |x_0|^{3(\frac 3 q-1)} \ga^3,
	}
provided $|x_0|^{-2}\lesssim_\la T$.
Similarly for the pressure
	\[
	\int_0^1 \int_{B_1(x_0)} | p|^{\frac 3 2}\,dx\,dt \lesssim_\la |x_0|^{3(\frac 3 q-1)} \ga^3.
	\]
So, provided $C |x_0|^{3(\frac 3 q-1)} \ga^3 <\e_{*}$ and $|x_0|^{-2}\lesssim_\la T$, we obtain from Lemma \ref{lemma:ereg} that, for every $l\in \N_0$,
	\[\sup_{3/4<s<1} \| \nb^l u\|_{L^\I(B_{\frac 1 2}(x_0))} \lesssim_{\la,l} |x_0|^{(\frac 3 q-1)} \ga.\]
Implying, since $|x|\gg1$, for $t\in [\frac 3 4, 1]$ and any $l\in \N_0$,
	\[
	|\nb^l u(x, t)| \lesssim_{\la,l} \frac {\ga} {(|x| +1)^{1-\frac 3 q}}. 
	\]
This procedure can be repeated finitely many times (depending on $\la$) on cylinders of the form $B_1(x_0)\times [\delta,\delta+1]$ until we obtain the estimate for all $t\in [ \frac 3 4,\frac 3 4+\la ^2]$, which contains a full `period' of discrete self-similarity of $u$. Then, because $u$ is regular for $|x|>R_0\sqrt t$, by increasing $C$ and using DSS scaling this decay extends to the entire sub-parabaloid region of regularity in re-scaled form as 
	\[
	|u(x,t)|\lesssim_{\la,u_0} \frac {\ga} { \sqrt t^{|l|+\frac 3 q} (|x|+\sqrt t)^{1-\frac 3 q}}.
	\]	
\end{proof}

\begin{proof}[Proof of Theorem \ref{theorem:Lpdecay1}]
\textit{Part 1:} 
Assume $u$ is a DSS, local energy solution and $u_0\in L^q_\loc(\R^3\setminus \{0\})$, for some $q>3$. By repeatedly applying \cref{theorem:JSlocalsmoothing}, 
	\EQ{u-a\in C^\gamma_{par}(\overline {A_0} \times [0,T]),}
where $T$ is as in \cref{theorem:JSlocalsmoothing}. So, $u-a\in L^\I( 0,T; L^q (A_{0}))$. Since $a$ is also in $L^\I([0,T];L^q (A_{0}))$ by sub-critical local well-posedness in $L^q$, $u\in L^\I( 0,T; L^q (A_{0}))$.
	
We now obtain a bound for the pressure in $ L^\I( 0,T; L^{q/2} (A_{0}))$. For $x\in A_0$, break the pressure into the following pieces:
	\EQ{
		p(x,t)= [-\De^{-1}\div\div]_{ij} (u_iu_j ) (x,t)=(p_1+p_2+p_3)(x,t),
	}
where
\EQ{
&p_1=[-\De^{-1}\div\div]_{ij} (u_iu_j \chi_{B_{\la^{-1}}}  ) 
\\& p_2=[-\De^{-1}\div\div]_{ij} (u_iu_j \chi_{A_0^*}  )
\\&p_3= [-\De^{-1}\div\div]_{ij} (u_iu_j \chi_{B^c_{\la^{2}}}  ) ,
}
and the balls are taken to be centered at the origin. By \textit{a priori} estimates for local energy solutions,
\EQ{
\|[-\De^{-1}\div\div]_{ij} (u_iu_j \chi_{B_{\la^{-1}}}  ) \|_{L^\I (A_0 \times (0,T])}\lesssim_\la \|u_0\|_{L^2_\uloc}^2.
}
Then, by Calderon-Zygmund theory $p_2\in L^\I (0,T;L^{q/2} (A_0) )$.  The last term is more involved:
\EQ{
|p_3(x,t)|&\lesssim  \sum_{k\geq 2}\frac 1 {2^{3k}} \int_{A_k} |u(y,t)|^2\,dy 
\\&\lesssim \sum_{k\geq 2}\frac 1 {2^{3k}}  2^{3k(1-\frac 2 q)}\bigg(\int_{A_k} |u(y,t)|^q\,dy\bigg)^{\frac 2 q} 
\\&\lesssim  \sum_{k\geq 2}\frac 1 {2^{3k \frac 2 q}}  \bigg(2^{-qk+3k}  \int_{A_0} |u(y,2^{-k}t)|^q\,dy\bigg)^{\frac 2 q} 
\\&\lesssim\bigg( \sum_{k\geq 2} 2^{-2k}\bigg) \sup_{0<t<T}\|u(\cdot,t) \|^2_{L^{q}(A_0)},
}
and so $p_3\in L^\I (A_0\times (0,T] )$. Hence, $p\in L^\I (0,T;L^{q/2} (A_0) )$. By scaling and   modifying $T$ in a fashion depending on $\la$ to get these estimates over $A_0^*$, we have shown that $u$ and $p$ satisfy the assumption of  \cref{lemma:DSSLqDecay} and, consequently, obtain the desired decay rate for $u$ and its derivatives.

\bigskip\noindent 

\textit{Part 2:} We now show improved decay rates for $\td u=u-e^{t\De}u_0$. Assume $|x|>2\la^2 R_0$ and $t\in [1,\la^2]$, then
	\EQ{|\td u (x,t) |
	&\lesssim   \int_0^t \int_{B_{R_0 \sqrt s}}\frac{1}{(|x-y|+\sqrt{t-s})^4}|u_iu_j|\,dy\,ds \\ 
	&+ \int_0^t\int_{B_{R_0\sqrt s}^c}\frac{1}{(|x-y|+\sqrt{t-s})^4}\frac{1}{\sqrt{s}^{\frac{6}{q}}(|y|+\sqrt{s})^{2-\frac{6}{q}}}\,dy\,ds }
by the pointwise upper bound \eqref{ineq:pointwiseBound.q} for $u$ in the region of regularity. Using \cref{lemma.intbound1} on the far-field integral,
	\EQ{\int_0^t\int_{B_{R_0\sqrt s}^c}\frac{1}{(|x-y|+\sqrt{t-s})^4}\frac{1}{\sqrt{s}^{\frac{6}{q}}(|y|+\sqrt{s})^{2-\frac{6}{q}}}\,dy\,ds \lesssim_{\la}\frac{1}{(|x|+1)^{2-\frac{6}{q}}}.}
Next, using the fact that $|x|\lesssim |x-y|$ in the near-field integral
	\EQ{\int_0^t \int_{B_{R_0\sqrt s}}\frac{1}{(|x-y|+\sqrt{t-s})^4}|u_iu_j|\,dy\,ds
	\lesssim_\la \frac{1} {(|x|+1)^4} \int_0^t\int_{B_{2\la R_0}}|u|^2 \,dy.}
Now, because $u\in L^\I(0,\la^2; L^2_{\uloc}(\R^3))$ by \emph{a priori} local energy estimates, and since $R_0$ is fixed,
	\EQ{\frac{1} {(|x|+1)^4}\int_0^t\int_{B_{2R_0\la}}|u|^2 \,dy\,ds
	&\lesssim_{R_0,\la} \frac{1} {(|x|+1)^4}\|u\|^2_{L^\I(0,\la^2; L^2_{\uloc}(\R^3))}.}
This gives us the bound 
	\EQ{|\td u(x,t)|&\lesssim_{\la,R_0,u_0} \frac1{(|x|+1)^4}  +\frac{1}{(|x|+1)^{2-\frac{6}{q}}}.}
The weakest decay rate is the last term.  Re-scaling, using the fact that $\td u$ is DSS and increasing the suppressed constant to fill in the rest of the region of regularity $|x|\geq R_0\sqrt t$ gives us the asserted estimate.  
  
\end{proof}

\begin{proof}[Proof of \cref{theorem:pwdecay}]

For induction, we define for $k\in \N_0$,
\[
a_k = (k+2)\bigg(1-\frac{3}{q}\bigg) = a_{k-1}+1-\frac 3 q.
\]
Note that $a_k<4$ precisely when $k<\frac {4q} {q-3}-2$.
Our base case is 
\[
|u-P_0|(x,t)\lesssim_{\la,R_0,u_0} \frac {\sqrt t^{ 2 - \frac 6 q} } {\sqrt t (|x|+\sqrt t)^{2-\frac 6 q}},
\]
which is implied by Theorem \ref{theorem:Lpdecay1}.

To start, assume that 
	\EQ{|u-P_k|(x,t) \lesssim_{k,\la,R_0,u_0}\frac{\sqrt{t}^{a_k}}{\sqrt{t}(|x|+\sqrt{t})^{a_k}},}
for $|x|\geq R_0\sqrt t$.

Using the triangle inequality, bilinearity of $B(\cdot,\cdot)$, and $P_k = P_0 - B(P_{k-1},P_{k-1})$, we can bound $|u-P_{k+1}|$ by
	\EQ{|u-P_{k+1}| &= |P_0-B(u,u)-P_{k+1}|\\
		&=|B(u,u)-B(P_k,P_k)|\\
	&\lesssim |B(u,u-P_k)+B(u,P_k)-B(P_k,P_k)|\\
	&\lesssim |B(u-P_k,u-P_k)+B(P_k,u-P_k)+B(u-P_k,P_k)|.
}
Therefore,
	\EQ{|u-P_{k+1}|\lesssim&\int_0^t \int_{\R^3}\frac{1}{(|x-y|+\sqrt{t-s})^4}\big(|u-P_k|^2+|u-P_k||P_k|\big)\,dy\,ds\\
	\lesssim& \int_0^t \int_{B_{R_0\sqrt t}}\,\,\frac{1}{(|x-y|+\sqrt{t-s})^4}\big(|u-P_k|^2+|u-P_k||P_k|\big)\,dy\,ds\\
	&+\int_0^t \int_{B_{R_0\sqrt t}^c}\frac{1}{(|x-y|+\sqrt{t-s})^4}\big(|u-P_k|^2+|u-P_k||P_k|\big)\,dy\,ds\\
	=:&I_{k+1}(x,t)+J_{k+1}(x,t).}

Using \cref{lemma:Picard.decay} and \cref{lemma.intbound1},
	\EQ{J_{k+1}(x,t)\lesssim
&\int_0^t \int_{B_{R_0\sqrt t}^c}\frac{1}{(|x-y|+\sqrt{t-s})^4}\bigg(\frac{ s^{a_k-1}}{(|y|+\sqrt{s})^{2a_k}}+\frac{ \sqrt{s}^{a_k}}{\sqrt{s}^{1+\frac 3 q}(|y|+\sqrt{s})^{a_{k+1}}}\bigg)\,dy\,ds\\
	\lesssim& \frac{{\sqrt t}^{2a_k}}{\sqrt t(|x|+\sqrt{t})^{2a_{k}}}+\frac{\sqrt{t}^{a_{k+1}}}{\sqrt t(|x|+\sqrt{t})^{a_{k+1}}}.
}
It is easy to see that  the second term is decaying more slowly than the first. Hence,  we can increase the constant to   overcome the term with faster decay  to conclude
\[
	J_{k+1}(x,t) \lesssim_k \frac{\sqrt{t}^{a_{k+1}}}{\sqrt t(|x|+\sqrt{t})^{a_{k+1}}}.
\]

For $I_{k+1}$, 
%
	\EQ{I_{k+1}(x,t)&\lesssim \frac  t{(|x|+\sqrt t)^4} \big( \| u \|_{L^2(B_{\la R_0})}^2  + \| P_k\|_{L^2(B_{\la R_0})}^2   \big)
	\\&\lesssim_{\la,R_0,u_0,k}   \frac  1{(|x|+1)^4},
	}
for $t\in [1,\la^2]$.  Re-scaling, noting $I_{k+1}$ is DSS, leads to the better estimate 
\EQ{
	I_{k+1}(x,t)&\lesssim_{\la,R_0,u_0}  \frac{\sqrt{t}^3}{(|x|+\sqrt{t})^4}.
}

Combining these estimates and using DSS scaling we obtain
	\EQ{|u-P_{k+1}| &\lesssim_{k,\la,R_0, u_0}\frac{\sqrt{t}^{a_{k+1}}}{\sqrt{t}(|x|+\sqrt{t})^{a_{k+1}}}+\frac{\sqrt{t}^3}{(|x|+\sqrt{t})^4}.}
If $k+1<\frac{4q}{q-3}-2$,  then the term involving $a_{k+1}$ above dominates and our induction continues. Otherwise our induction terminates and the optimal   decay rate is reached, i.e., \EQ{|u-P_{k_q}| \lesssim_{k_0,\la,R_0,u_0 }\frac{\sqrt{t}^3}{(|x|+\sqrt{t})^4},} 
where  $k_q$ is chosen to be the  smallest natural number satisfying $a_{k_q}\geq 4$.

Lastly, we show the separation rate. Clearly, if two solutions $u$ and $v$ belong to $\mathcal{N}(u_0)$, then, for $|x|>R_0\sqrt t$, 
	\[|u-v|(x,t) =|u- P_{k_q}(u_0) - (v-P_{k_q}(u_0))|(x,t) \lesssim_{q,\la,R_0,u_0} \frac{\sqrt{t}^3}{(|x|+\sqrt{t})^4}.\]
\end{proof}

\subsection{Decay rates when $u_0\in C^{0<\al\leq 1}_\loc(\R^3\setminus \{0\})$ }

Now, we may show the improved decay rate for $\td u =u-e^{t\De}u_0$, where $u_0\in C^\alpha_{\loc}(\R^3\setminus \{0\})$.
\begin{proof}[Proof of \cref{theorem:tdu.pointwise.decay.CAl}]

We know, for any $(x,t) \in \R^4_+$,
	\EQ{\label{eq:setup}|\td u| =|B(u,u)| \le |B(u,\td u)|+|B(\td u,P_0)| + |B(P_0,P_0)|.}
Note that ${P_0}$, which decays like $u$, decays algebraically slower than $\td u = u-P_0$ and, therefore, the last term, heuristically, decays the slowest. To treat this term, we consider two cases: $\al<1$ and $\al=1$. 
	
The first case is $\al<1$. Let $\be\in (\al,\min(1,2\al))$. By standard properties of Fourier multipliers,
	\EQ{ B({P_0},{P_0})(x,t)&=\int_0^t\int_{\R^3} S(x-y,t-s)\nb \cdot ({P_0}\otimes {P_0})(y,s) \,dy \, ds\\
		&= \int_0^t \int_{\R^3} \nb \La^{-\be}S(x-y,t-s)\La^\be ({P_0}\otimes {P_0})(y,s)\,dy \, ds.}
By \cref{prop:DL.Oseen.est}, we have a bound for the fractional derivative of the Oseen Kernel
	\[\left|\nb \La^{-\be}S(x-y,t-s)\right| 
	\lesssim\frac{1}{(|x-y|+\sqrt{t-s})^{4-\be}}.\] 
Recall that \cref{lemma:CommutatorDecay} applied with $|m|=0$ implies that, for $0<\be<2\al$,
	\[\La^\be ({P_0}\otimes {P_0})(y,s)= {P_0}_i\La^\be {P_0}_j(y,s)+{P_0}_j\La^\be {P_0}_i(y,s)+ O\left(\frac 1 {(|y|+1)^{2+\be}}\right).\] 
So, by scaling, Lemma \ref{lemma:decayCaloricFractional}, and noting that $\al<\be$,
	\EQ{\La^\be ({P_0}\otimes {P_0})(y,s) \lesssim \frac{C }{(|y|+\sqrt{s})^{2+\al}}.}	
Therefore, 
	\EQ{
 |B({P_0},{P_0})|(x,t)&\lesssim \int_0^t \int_{\R^3} \frac{1}{(|x-y|+\sqrt{t-s})^{4-\be}}\frac 1 {(|y|+\sqrt{s})^{2+\al}}\,dy \, ds
\\&\lesssim \frac { t^{\frac {1+\al} 2}} {(|x|+\sqrt{t})^{2+\al}},
	} 
using a re-scaled version of \eqref{ineq:Tsai.integral}.
	
If $\al=1$, then we argue similarly using Lemma \ref{lemma:heat.equation.pointwise.decay.alpha} instead of Lemma \ref{lemma:decayCaloricFractional}. The main difference is that we cannot move any derivatives onto the kernel of the heat operator to shift away from a cubic power. This results in a logarithm when applying \eqref{ineq:Tsai.integral}. In particular, we obtain 
	\EQ{\label{ineq:bilinear.caloric}
		|B({P_0},{P_0})|(x,t) &\lesssim \frac { t } {(|x|+\sqrt{t})^{3}} \log(|x|+\sqrt t).
	} 

We now bound the remaining terms from \eqref{eq:setup}, namely $  |B(u,\td u)|+|B(\td u,P_0)| $. Assume $t\in [1,\la^2]$ and $|x|\geq 2\sqrt t R_0$. Then 
\EQ{
|B(u,\td u)|(x,t) &\lesssim \bigg(\int_0^t \int_{|y|\geq  \sqrt t R_0}  + \int_0^t \int_{|y|<\sqrt t R_0}  \bigg)  \frac 1 {(|x-y|+\sqrt {t-s})^4 } |u\td u|(y,s)\,dy\,ds
\\&=:I_1+I_2.
}
By Theorem \ref{theorem:Lpdecay1},
\EQ{
I_1\lesssim \int_0^t \int  \frac 1 {(|x-y|+\sqrt {t-s})^4 } \frac {\sqrt s} {(|y|+\sqrt s)^{3}} \,dy\,ds \lesssim \frac t {(|x|+\sqrt t)^3},
}
where we used  Lemma \ref{lemma.intbound1}. Above we worked with $t\in [1,\la^2]$ and extend this estimate to all $t$ by scaling.
On the other hand, by the \textit{a priori} bound \eqref{ineq.apriorilocal} we have that $\| u\|_{L^\I(0,\la^2; L^2(B_{\la R_0}))}\lesssim_{\la ,R_0} \|u_0\|_{L^2_\uloc}$ and by \cite{MaTe}, we have that
$\| P_0\|_{L^\I(0,\la^2; L^2(B_{\la R_0}))}\lesssim_{\la, R_0} \|u_0\|_{L^2_\uloc}$.
Hence, noting that $|x|\sim |x-y|$ due to our choice of $x$ and $y$, we have
\[
I_2\lesssim \frac 1 {|x|^4} \|u_0\|_{L^2_\uloc}^2.
\]
Since $|x|>1$ when $t\in [1,\la^2]$ by re-scaling we obtain 
\[
I_2(x,t)\lesssim \frac {t^{3/2}} {(|x|+\sqrt t)^4}.
\]
The estimate for $|B(\td u,P_0)|$ is basically identical since $P_0$ and $u$ have the same decay properties and bounds in $L^2_\uloc$.

\end{proof}

\begin{remark}
In the above proof for the case $\al=1$, the logarithm only appeared in the estimate  \eqref{ineq:bilinear.caloric} for $B(P_0,P_0)$. If this logarithm is necessary, and since $B(P_0,P_0)$ can be computed explicitly, it should be possible to construct an example demonstrating this necessity. On the other hand, if the log in \eqref{ineq:bilinear.caloric} can be removed, then the same is true in Theorem \ref{theorem:tdu.pointwise.decay.CAl}.
\end{remark}

\subsection{Decay rate when $u_0\in C^{1,0<\al\leq 1}_\loc(\R^3\setminus \{0\})$}

\begin{proof}[Proof of \cref{theorem:tdu.pointwise.decay.C1Al}] Let $t \in[1,\la^2]$ and $|x|>2 R_0\sqrt{t} $. First, decompose $\td u$ as 
	\EQ{\td u =& B(u,u)=B(u,\td u) + B(\td u,{P_0})  + B({P_0},{P_0})\\
		=&\int_0^t \int_{\R^3} \nb S (x-y,t-s) ({u }\otimes {\td u }   +{\td u }\otimes {P_0})(y,s)\,dy\,ds\\
		&+\int_0^t \int_{\R^3} \nb S (x-y,t-s)  ( P_0 \otimes P_0)(y,s)\,dy\,ds
		.}
In the proof of Theorem \ref{theorem:tdu.pointwise.decay.CAl} we showed that 
\EQ{
\int_0^t \int_{\R^3} \nb S (x-y,t-s) ({u }\otimes {\td u }   +{\td u }\otimes {P_0})(y,s)\,dy\,ds \lesssim \frac {t} {(|x|+\sqrt t )^3}.
}
The same estimate holds presently because we are making stronger assumptions on the data.
We now establish the same decay for $ B({P_0},{P_0})$. To accomplish this, we rewrite the integral for $B(P_0,P_0)$ as follows where $\be\in (0,1)$ will be specified momentarily:
\[
\int_0^t \int_{\R^3} \nb  S(x-y,t-s)  ( P_0 \otimes P_0)(y,s)\,dy= \int_0^t \int_{\R^3} \La^{-\be}\nb S(x-y,t-s)  \La^\be ( P_0 \otimes P_0)(y,s)\,dy\,ds.
\]
Note that
\[
\int_{\R^3} \nb\La^{-\be} S \,dx = 0,
\] 
since $\mathcal F (\nb\La^{-\be} S  ) (0)=0$ which uses the fact that $\be<1$. Using this, we re-write the above integral as
\EQ{\label{1.19.22.a}
	 &\int_0^t \left(\int_{|x-y|\leq \frac{|x|} 2} + \int_{|x-y|>\frac{|x|} 2}\right) \La^{-\be}\nb S(x-y,t-s)  \La^\be ( P_0 \otimes P_0)(y,s)\,dy\,ds\\
	&= \int_0^t \int_{|x-y|\leq \frac{|x|} 2} \La^{-\be}\nb S(x-y,t-s)   \bigg( \La^{\be} ( P_0 \otimes P_0)(y,s) - \La^{\be} ( P_0 \otimes P_0)(x,s) \bigg) \,dy\,ds\\
	&-\int_0^t \int_{|x-y|>\frac{|x|} 2} \La^{-\be}\nb S(x-y,t-s)  \La^\be ( P_0 \otimes P_0)(x,s) \,dy\,ds\\
	&+ \int_0^t \int_{|x-y|>\frac{|x|} 2} \La^{-\be}\nb S(x-y,t-s)  \La^\be ( P_0 \otimes P_0)(y,s)\,dy\,ds\\
	&=: J_1(x,t)+J_2(x,t)+J_3(x,t).}
We first bound $J_1$.   Using the mean value theorem and \cref{prop:DL.Oseen.est},  we have
\EQ{\label{ineq:C1aI}|J_1|(x,t) \lesssim & \int_0^t \int_{|x-y|\leq \frac{|x|} 2} \frac {|x-y|} {(|x-y|+\sqrt{t-s})^{4-\be}} \| \nb \La^{\be }   ( P_0 \otimes P_0)\|_{L^\I(B_{\frac{|x|} 2}(x))}(t)\,dy\,ds
	\\\lesssim & |x|^{\be} \sup_{0<s<t} \| \nb \La^\be  ( P_0 \otimes P_0)\|_{L^\I(B_{\frac{|x|} 2}(x))(s)}.
}
We further assume $\be \in (0,\al)$.  Since $u_0\in C^{1,\al}_\loc(\R^3\setminus\{0\})$,  by \cref{cor:DecayFractionLaplacian} we have
\EQ{\label{ineq:P0nbLaBDecay}
\sup_{s\in[1,\la^2]} |\nb \La^{\be} P_0(x,s)| \lesssim (|x|+1)^{-2-\be},
}
and
\EQ{\label{ineq:P0LaBDecay}
	\sup_{s\in[1,\la^2]} |\La^{\be} P_0(x,s)| \lesssim (|x|+1)^{-1-\be}.
}
Next, by \cref{lemma:CommutatorDecay}, \eqref{ineq:P0nbLaBDecay}, and the decay for $P_0$ 
\EQ{\label{ineq:P0nbCommDecay}
|\nb \La^{\be}  ( P_0 \otimes P_0)| (x,t) &\leq |{P_0}_i \nb\La^{\be} {P_0}_j | (x,t)+| {P_0}_j \nb \La^{\be} {P_0}_i |(x,t) + \mathcal O\bigg(\frac {1} {|x|^{3 +\be}}\bigg)\\
&\lesssim_{\la}(|x|+1)^{-3-\be}.
}
and likewise, using\eqref{ineq:P0LaBDecay},
\EQ{\label{ineq:P0CommDecay}
|	\La^{\be} ( P_0 \otimes P_0) |(x,t) &\leq | {P_0}_i \La^{\be} {P_0}_j |(x,t)+| {P_0}_j  \La^{\be} {P_0}_i |	 (x,t) + \mathcal O\bigg(\frac {1} {|x|^{2 +\be}}\bigg)\\
	&\lesssim_{\la,k}(|x|+1)^{-2-\be}.
}
Therefore, \eqref{ineq:C1aI} is bounded by
\[
|x|^{\be} (|x|+1)^{-3-\be}\lesssim (|x|+1)^{-3}.
\]

Next, for $J_2$, by using \eqref{ineq:P0CommDecay} and \cref{prop:DL.Oseen.est},
\EQ{
	|J_2|&\lesssim \int_0^t \int_{|x-y|>\frac{|x|} 2} \frac 1 {(|x-y|+\sqrt{t-s})^{4-\be}} \La^{\be} (P_0 \otimes P_0) (x,s)\,dy\,ds
	\\&\lesssim (|x|+1)^{-2-\be }\int_0^t \int_{|x-y|>\frac{|x|} 2} \frac 1 {(|x-y|+\sqrt{t-s})^{4-\be}} \,dy\,ds
	\\&\lesssim_\la (|x|+1)^{-2-\be} |x|^{-1+\be}\lesssim_\la (|x|+1)^{-3}.
}
Lastly, for $J_3$, we need to handle the near- and far-field seperately, and so we break $J_3$ into
\EQ{
	|J_3|&\lesssim \int_0^t \int_{|x-y|>\frac{|x|} 2} \frac 1 {(|x-y|+\sqrt{t-s})^{4-\be}} \La^{\be}( P_0 \otimes P_0) (y,s)\,dy\,ds
	\\&\lesssim_\la \left(\int_{|x-y|>\frac{|x|} 2,|y|>|x|/2}+\int_{|y|\le|x|/2} \right)\frac 1 {|x-y|^{4-\be}} (|y|+1)^{-2-\be}\,dy\\
	&=:J_{31}+J_{32}.}
Then,
\EQ{J_{32}&\lesssim_\la \frac 1 {|x|^{4-\be}} \int_{|y|\le|x|/2} (|y|+1)^{-2-\be}\,dy\,ds
	\\&\lesssim_\la \frac 1 {|x|^{4-\be}}(|x|+1)^{1-\be}\lesssim_\la (|x|+1)^{-3}
} because $\be <1$. Also
\EQ{J_{31} &\lesssim_\la  \int_{|x-y|>\frac{|x|} 2,|y|>|x|/2}\frac 1 {(|x-y|+\sqrt{t-s})^{4-\be}} (|y|+1)^{-2-\be}\,dy \\
	&\lesssim  (|x|+1)^{-2-\be}\int_{|x-y|>\frac{|x|} 2}\frac 1 {|x-y|^{4-\be}}\,dy\,ds\\
	&\lesssim   (|x|+1)^{-2-\be}\frac 1 {|x|^{1-\be}}\lesssim (|x|+1)^{-3}
	.}
Therefore,
\EQ{|\td u|(x,t) \lesssim  (|x|+1)^{-3}.
	}
To extend to all $t>0$ and $|x|>R_0\sqrt t$  as in   \eqref{ineq.HolderDecayCubic}, we appeal to DSS scaling.
\end{proof}
\section{Optimality}\label{sec:optimailty}

In many instances, the estimates developed in this paper hinge critically on bounds for $P_0$. For example, since $\td u = u-P_0$ decays faster than $u$ in Theorem \ref{theorem:Lpdecay1}, the decay for $u$ is optimal precisely when the decay of $P_0$ is optimal. Additionally, in the proofs of Theorems \ref{theorem:tdu.pointwise.decay.CAl} and \ref{theorem:tdu.pointwise.decay.C1Al}, the term $B(P_0,P_0)$ determines the decay of $\td u$ while all other terms constituting $\td u$ decay more rapidly. The decay of $B(P_0,P_0)$ is established following the heuristic 
\[
|B(P_0,P_0)| \sim |P_0\cdot \nb P_0|.
\]
Hence, the sharpness of our results can be checked by developing specific examples at the level of the heat equation. We do not pursue this goal comprehensively, but do provide an example in the simple scenario when $u_0\in L^\I_\loc(\R^3\setminus\{0\})\cap SS$ and is discontinuous. 
In particular, we identify a discontinuous,  SS function $f\in L^\I_\loc(\R^3\setminus \{0\})$ so that
\[
\frac 1 { (|x| +1)}\lesssim 	| \partial_{x_2} e^{\Delta}f | (x,t) ,
\] 
and 
\[
\frac 1 { (|x| +1)}\lesssim 	| e^{\Delta}f | (x,t),
\] 
for $x$ living on the $x_1$-axis.
According to the heuristic above, we thus expect the upper bound in Theorem \ref{theorem:Lpdecay1},
\[
|\td u|(x,t) \sim |B(P_0,P_0)|(x,t) \lesssim \frac {\sqrt t} { (|x| +\sqrt t)^2}, \qquad |x|\geq R_0\sqrt t, 
\]
is optimal.

For simplicity, we do not consider the divergence free property. 
Let $f(x) = \frac {-1} {|x|}$ if $x_2\geq 0$ and $f(x) = \frac 1 {|x|}$ if $x_2\leq 0$. 
Let $x^* = (1,0,0)$ and let $x=\la x^*$.
Consider the caloric extension of $f$ at $t=1$ and $x$:
\EQ{
\partial_{x_2} e^{\Delta} f( x ) &= c \int_{y_2\geq 0}   {-y_2}  e^{ -|x-y|^2/4} \frac {-1} {|y|}\,dy
\\&+ \int_{y_2< 0}   {-y_2}  e^{- |x-y|^2/4} \frac {1} {|y|}\,dy,
}
where we've used the fact that $x_2=0$ for our choice of $x$. 
Since both integrals are positive we have
\EQ{
\partial_{x_2} e^{\Delta} f( x ) &\geq  c \int_{y_2\geq 0}    {y_2}  e^{ -|x-y|^2/4} \frac {1} {|y|}\,dy >0.
}
Since the integrand is non-negative, shrinking the domain of integration decreases the integral and we have 
\EQ{
\partial_{x_2} e^{\Delta} f( x) &\geq c  \int_{ y_2\geq 1; |x-y|<2 }  {y_2}  e^{- |x-y|^2/4} \frac {1} {|y|}\,dy
\\&\geq  \frac C {|x|}\int_{ y_2\geq 1; |x-y|<2 }    e^{- 1}  \,dy,
}
for $|x|\gg 1$ (we emphasize that $x=\la x^*$ so this is not asserted for all points outside of $B_1$). The volume of the region of integration is constant since $x$ sits on the plane $y_2=0$. 
Therefore, for any $\e>0$, we cannot have that
\[
|\nb e^{\Delta}f |(x)\lesssim |x|^{-1-\e}.
\] 
The same proof implies 
\[
|e^{\Delta} f|( x) \geq C |x|^{-1}.
\]

\section*{Acknowledgments} Z.~Bradshaw was supported in part by the Simons Foundation (635438).

\end{document}